\newtheorem{thm}{Theorem}[section]
\newtheorem{cor}[thm]{Corollary}
\newtheorem{lem}[thm]{Lemma}
\newtheorem{prop}[thm]{Proposition}
\theoremstyle{definition}
\theoremstyle{remark}
\newtheorem{rem}[thm]{Remark}
\theoremstyle{example}
\theoremstyle{conjecture}
\numberwithin{equation}{section}
\newcommand{\RR}{{\mathbb R}}
\newcommand{\CC}{{\mathbb C}}
\newcommand{\NN}{{\mathbb N}}
\newcommand{\calF}{{\mathcal F}}
\newcommand{\calI}{{\mathcal I}}
\newcommand{\calJ}{{\mathcal J}}
\newcommand{\calO}{{\mathcal O}}
\newcommand{\calW}{{\mathcal W}}
\begin{document}

\title[Weighted composition operators]{Weighted composition operators between Fock spaces in several variables}

\author{Pham Trong Tien$^1$ \& Le Hai Khoi$^2$}%
\address{(Tien) Department of Mathematics, Mechanics and Informatics, Hanoi University of Science, VNU, 334 Nguyen Trai, Hanoi, Vietnam}%
\email{phamtien@vnu.edu.vn, phamtien@mail.ru}

\address{(Khoi) Division of Mathematical Sciences, School of Physical and Mathematical Sciences, Nanyang Technological University (NTU),
637371 Singapore}%
\email{lhkhoi@ntu.edu.sg}

\thanks{$^1$ Supported in part by NAFOSTED grant No. 101.02-2017.313. \\
\indent $^2$ Supported in part by MOE's AcRF Tier 1 M4011724.110 (RG128/16)}

\subjclass[2010]{47B33, 32A15}%

\keywords{Fock space, weighted composition operator, essential norm}%

\date{\today}%



\begin{abstract}
We obtain criteria for the boundedness and compactness of weighted composition operators between different Fock spaces in $\CC^n$. We also give estimates for essential norm of these operators.
\end{abstract}

\maketitle

\section{Introduction}
Let $\CC^n$ be the $n$-dimensional complex Euclidean space and $\calO(\CC^n)$ the space of entire functions on $\CC^n$ with the usual compact open topology.
For a number $p \in (0,\infty)$, the Fock space $\calF^p(\CC^n)$ consists of all functions $f$ from $\calO(\CC^n)$ for which
$$
\|f\|_{n,p} = \left( \left(\dfrac{p}{2\pi}\right)^n \int_{\CC^n} |f(z)|^p e^{-\frac{p|z|^2}{2}}dA(z) \right)^{\frac{1}{p}} < \infty,
$$
where $dA$ is the Lebesgue measure on $\CC^n$. Furthermore, the space $\calF^{\infty}(\CC^n)$ is defined as follows
$$
\calF^{\infty}(\CC^n) = \left\{ f \in \calO(\CC^n): \|f\|_{n, \infty} = \sup_{z \in \CC^n}|f(z)|e^{-\frac{|z|^2}{2}} < \infty \right\}.
$$
It is well known that $\calF^p(\CC^n)$ with $1 \leq p \leq \infty$ is a Banach space, while for $0 < p < 1$, $\calF^p(\CC^n)$ is a complete metric space with the distance $d(f,g)  = \|f-g\|^p_{n, p}$.

For each $w \in \CC^n$, we define the functions
$$
K_w(z) = e^{\langle z,w \rangle} \text{ and } k_w(z) = e^{\langle z,w \rangle - \frac{|w|^2}{2}}, \quad z \in \CC^n,
$$
where $\langle z, w \rangle = z_1 \overline{w_1} + \cdots + z_n \overline{w_n}$ and $|w| = \sqrt{\langle w, w \rangle}$.
In the case $p = 2$, the functions $K_w$ are reproducing kernels of the Hilbert Fock space $\calF^2(\CC^n)$, i.e., $f(z) = \langle f, K_z \rangle$ for all $f \in \calF^2(\CC^n)$ and $z \in \CC^n$.
Moreover, $\|k_w\|_{n, p} = 1$ for all $w \in \CC^n$ and $0< p \leq \infty$, and $k_w$ converges to $0$ in the space $\calO(\CC^n)$ as $|w| \to \infty$.

The Fock spaces and classical operators on them play an important role in harmonic analysis on the Heisenberg group, partial differential equations and quantum physics (see e.g. \cite{KZ}). From this reason they have been studied intensively in different directions (see, for instance, \cite{BI-12, BC-87, HL-11, IZ-10} for Toeplitz operators, \cite{S-92, W-89} for Hankel operators, \cite{C-12, H-13} for Volterra-type integration operators, \cite{CMS-03, CIK-10, D-14, HK-16, HK-17, T-14, U-07} for (weighted) composition operators).

In this paper we are interested in weighted composition operators between different Fock spaces in $\CC^n$. Note that Ueki \cite{U-07} investigated the boundedness, compactness and essential norm of weighted composition operators $W_{\psi,\varphi}$ on Hilbert Fock spaces $\calF^2(\mathbb C^n)$ in terms of a certain integral transform $B_{\varphi}(|\psi|^2)(z)$. However, these results are quite difficult to use, even for composition operators $C_{\varphi}$, for which Carswell, MacCluer and Schuster \cite{CMS-03} had already provided the criteria for the boundedness and compactness.
Later, Le \cite{T-14} obtained much easier characterizations for the boundedness and compactness of $W_{\psi,\varphi}$ on Hilbert Fock space $\calF^2(\mathbb C)$. Recently, in \cite{TK-17} the authors extended Le's results to $W_{\psi,\varphi}$ acting from one Fock space $\calF^p(\mathbb C)$ to another one $\calF^q(\mathbb C)$ and stated also simpler estimates for essential norm of such operators $W_{\psi,\varphi}$. It should be noted that in \cite{T-14, U-07} the techniques of adjoint operators on Hilbert spaces played an essential role, while in the context of \cite{TK-17} these techniques do not work and a new approach is required. The aim of this paper is to develop the study in \cite{TK-17} for the case of several variables.

The paper is organized as follows. In Section 2 we give some preliminary results about Fock spaces $\calF^p(\CC^n)$, operators defined on them and an extension of \cite[Proposition~2.1]{T-14} to several variables, which plays a crucial role in our study. Section 3 contains main criteria for the boundedness and compactness of $W_{\psi,\varphi}: \calF^p(\CC^n) \to \calF^q(\CC^n)$. In this section, firstly we prove that these bounded weighted composition operators $W_{\psi,\varphi}$ can be induced only by mappings $\varphi(z) = Az + b$ with an $n \times n$ matrix $A$, $\|A\| \leq 1$, and a vector $b$ in $\CC^n$. Then we use the singular value decomposition $A = V\widetilde{A}U$ to reduce the study of $W_{\psi,\varphi}$ to that of $W_{\widetilde{\psi},\widetilde{\varphi}}$ induced by the so-called \textit{normalization} $(\widetilde{\psi},\widetilde{\varphi})$ of $(\psi,\varphi)$, where $\widetilde{\varphi}(z) = \widetilde{A} z + \widetilde{b}$ and $\widetilde{A}$ is a diagonal matrix with $1 \geq \widetilde{a}_{11} \geq \widetilde{a}_{22} \geq ... \geq \widetilde{a}_{ss} \geq \widetilde{a}_{s+1,s+1} = ... = \widetilde{a}_{nn} =0$.
From this we get necessary and sufficient conditions for the boundedness and compactness of $W_{\psi,\varphi}$ in terms of $(\widetilde{\psi},\widetilde{\varphi})$. In the case when $W_{\psi,\varphi}$ acts from a larger Fock space into a smaller one, these properties are equivalent. By this reason Section 4 deals with estimates for essential norm of only operators $W_{\psi,\varphi}: \calF^p(\CC^n) \to \calF^q(\CC^n)$ with $p \leq q$.

\section{Preliminaries}
In this section we give some auxiliary results which will be used throughout the paper.

For each $z = (z_1,...,z_n) \in \CC^n$ and $0 \leq s \leq n$, we denote
$$
z_{[s]} =
\begin{cases}
\emptyset, \quad \quad \quad \ \ \text{ if } s = 0 \\
(z_1,...,z_s), \text{ if } s \neq 0,
\end{cases}
\text{and  }
z'_{[s]} =
\begin{cases}
(z_{s+1},...,z_n), \text{ if } s \neq n\\
\emptyset, \quad \quad \quad \quad \ \ \text{ if } s = n,
\end{cases}
$$
by convention that
$ |z_{[0]}| = 0$ and $|z'_{[n]}| = 0$.

\begin{lem}\label{lem-F}
Let $p \in (0,\infty)$, $b = (b_1,...,b_n) \in \CC^n$, and $f \in \calF^p(\CC^n)$. For each $0 < s < n$ the following statements are true:
\begin{itemize}
\item[(i)] The function $f(b_{[s]},\cdot) \in \calF^p(\CC^{n-s})$ and
$$
\|f(b_{[s]},\cdot)\|_{n-s, p}\; e^{-\frac{\left|b_{[s]}\right|^2}{2}} \leq \|f\|_{n, p}.
$$
\item[(ii)] The function $f(\cdot,b'_{[s]}) \in \calF^p(\CC^s)$ and
$$
\|f(\cdot,b'_{[s]})\|_{s, p}\; e^{-\frac{\left|b'_{[s]}\right|^2}{2}} \leq \|f\|_{n, p}.
$$
\end{itemize}
\end{lem}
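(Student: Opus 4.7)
My plan is to reduce both parts to the standard sharp pointwise estimate for Fock spaces, namely
$$
|g(b)|^p\, e^{-p|b|^2/2} \;\leq\; \|g\|_{m,p}^p, \qquad g\in\calO(\CC^m),\ b\in\CC^m,
$$
and then integrate by Fubini. The two statements are symmetric under permuting the two blocks of coordinates, so I only describe (i); (ii) follows from the identical computation with $z_{[s]}$ and $z'_{[s]}$ interchanged.

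Fix $w \in \CC^{n-s}$ and regard the slice $u \mapsto f(u,w)$ as an entire function on $\CC^s$. Applying the pointwise estimate at $u = b_{[s]}$ yields
$$
|f(b_{[s]}, w)|^p\, e^{-p|b_{[s]}|^2/2} \;\leq\; \left(\frac{p}{2\pi}\right)^s \int_{\CC^s} |f(u,w)|^p\, e^{-p|u|^2/2}\, dA(u).
$$
Next I multiply both sides by $(p/2\pi)^{n-s}\, e^{-p|w|^2/2}$ and integrate in $w$ over $\CC^{n-s}$. The left-hand side becomes $e^{-p|b_{[s]}|^2/2}\,\|f(b_{[s]},\cdot)\|_{n-s,p}^p$, and the right-hand side, after exchanging the order of integration (legitimate since the integrand is nonnegative), becomes $(p/2\pi)^n \int_{\CC^n}|f(z)|^p e^{-p|z|^2/2}\,dA(z) = \|f\|_{n,p}^p$. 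This last quantity is finite, so $f(b_{[s]},\cdot) \in \calF^p(\CC^{n-s})$, and taking $p$-th roots gives exactly the stated inequality.

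The only nonroutine ingredient is the pointwise estimate itself, and this is where one must track Gaussian factors carefully. I would prove it via the \emph{Weyl translation} $\widetilde g(z) := g(z+b)\,e^{-\langle z,b\rangle - |b|^2/2}$, which a direct calculation shows is an isometry of $\calF^p(\CC^m)$ and satisfies $|\widetilde g(0)|^p = |g(b)|^p e^{-p|b|^2/2}$, thus reducing matters to the case $b=0$. For $b=0$, plurisubharmonicity of $|g|^p$ makes the spherical means over $\{|z|=r\}$ nondecreasing in $r$ and bounded below by $|g(0)|^p$; integrating against the radial Gaussian weight and using the normalization $(p/2\pi)^m \int_{\CC^m} e^{-p|z|^2/2}\,dA = 1$ gives $|g(0)|^p \leq \|g\|_{m,p}^p$. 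Apart from this estimate, all of the work is bookkeeping, and I expect no genuine obstacle.
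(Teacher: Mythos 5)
Your proposal is correct and follows essentially the same route as the paper: the paper's auxiliary function $F(z_{[s]},z'_{[s]})=f(b_{[s]}-z_{[s]},z'_{[s]})e^{\langle z_{[s]},b_{[s]}\rangle-|b_{[s]}|^2/2}$ is exactly your Weyl translation (up to the reflection $z\mapsto -z$), and the paper likewise derives the pointwise slice estimate from plurisubharmonicity of $|F|^p$ together with polar coordinates before integrating out the remaining variables with Tonelli. The only cosmetic difference is that you isolate the one-slice pointwise bound as a separate lemma while the paper runs the translation and sub-mean-value argument inline.
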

\begin{proof} Since proofs of (i) and (ii) are similar, we prove, for example, (i).

Consider the function
$$
F(z_{[s]},z'_{[s]}) = f(b_{[s]}-z_{[s]},z'_{[s]})e^{\langle z_{[s]}, b_{[s]} \rangle - \frac{\left|b_{[s]}\right|^2}{2}}, \ z = (z_{[s]}, z'_{[s]}) \in \CC^n.
$$

Obviously,  for every $z'_{[s]} \in \CC^{n-s}$ fixed, the function $h(z_{[s]}) = |F(z_{[s]},z'_{[s]})|^p$ is a plurisubharmonic function of $z_{[s]}$.
Using the plurisubharmonicity of $h$ and polar coordinates, we have
\begin{align*}
&\int_{\CC^s} h(z_{[s]}) e^{-\frac{p\left|z_{[s]}\right|^2}{2}} dA(z_{[s]}) \\
= & \int_0^{\infty}...\int_0^{\infty} r_1...r_s e^{-\frac{p(r_1^2 + ...+ r_s^2)}{2}} dr_1...dr_s \\
& \qquad \qquad \qquad \qquad \times \int_0^{2\pi}...\int_0^{2\pi}h(r_1e^{i\theta_1},...,r_se^{i\theta_s})d\theta_1...d\theta_s \\
\geq & (2\pi)^s h(0)\int_0^{\infty}...\int_0^{\infty} r_1...r_s e^{-\frac{p(r_1^2 + ...+ r_s^2)}{2}} dr_1...dr_s = \left(\dfrac{2\pi}{p}\right)^s h(0).
\end{align*}
From this it follows that
\begin{align*}
&\left|f(b_{[s]},z'_{[s]})\right|^p e^{-\frac{p\left|b_{[s]}\right|^2}{2}} = \left|F(0,z'_{[s]})\right|^p\\
\leq & \left( \dfrac{p}{2\pi} \right)^{s} \int_{\CC^s} \left|f(b_{[s]}-z_{[s]},z'_{[s]})e^{\langle z_{[s]}, b_{[s]} \rangle - \frac{\left|b_{[s]}\right|^2}{2}}\right|^p e^{-\frac{p\left|z_{[s]}\right|^2}{2}}dA(z_{[s]}) \\
= & \left( \dfrac{p}{2\pi} \right)^{s} \int_{\CC^s} \left|f(b_{[s]}-z_{[s]},z'_{[s]})\right|^p e^{-\frac{p\left|b_{[s]} - z_{[s]}\right|^2}{2}}dA(z_{[s]}) \\
= & \left( \dfrac{p}{2\pi} \right)^{s} \int_{\CC^s} \left|f(z_{[s]},z'_{[s]})\right|^p e^{-\frac{p\left|z_{[s]}\right|^2}{2}}dA(z_{[s]}).
\end{align*}
Therefore,
\begin{align*}
&\|f(b_{[s]},\cdot)\|_{n-s, p}^p\; e^{-\frac{p\left|b_{[s]}\right|^2}{2}} \\
= & \left( \dfrac{p}{2\pi} \right)^{n-s} \int_{\CC^{n-s}} \left|f(b_{[s]},z'_{[s]})\right|^p e^{-\frac{p\left|b_{[s]}\right|^2}{2}} e^{-\frac{p\left|z'_{[s]}\right|^2}{2}}dA(z'_{[s]}) \\
\leq & \left( \dfrac{p}{2\pi} \right)^{n} \int_{\CC^{n-s}}\int_{\CC^s} \left|f(z_{[s]},z'_{[s]})\right|^p e^{-\frac{p\left|z_{[s]}\right|^2}{2}} e^{-\frac{p\left|z'_{[s]}\right|^2}{2}} dA(z_{[s]})dA(z'_{[s]}) \\
= & \left( \dfrac{p}{2\pi} \right)^{n} \int_{\CC^n} |f(z)|^p e^{-\frac{p|z|^2}{2}}dA(z) = \|f\|^p_{n,p},
\end{align*}
which completes the proof.
\end{proof}

The following two lemmas extend the corresponding results of \cite[Corollary 2.8 and Theorem 2.10]{KZ} to the  case of several variables.

\begin{lem} \label{lem-F1}
Let $p \in (0, \infty)$. Then for each function $f \in \calF^p(\CC^{n})$ and $z \in \CC^n$,
$$
|f(z)|e^{-\frac{|z|^2}{2}} \leq \|f\|_{n, p}.
$$
\end{lem}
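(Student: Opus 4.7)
The plan is to reprise the argument of Lemma~\ref{lem-F} but applied to all $n$ variables simultaneously (equivalently, to push the case $s=n$ through the same machinery). Fix $b\in\CC^n$ and introduce the auxiliary entire function
$$
F(z) = f(b-z)\, e^{\langle z,\, b\rangle - \frac{|b|^2}{2}}, \qquad z\in\CC^n.
$$
Since $F$ is entire, $|F|^p$ is plurisubharmonic on $\CC^n$. First I would obtain the pointwise sub-mean estimate at $0$ with Gaussian weight: passing to polar coordinates $z_j = r_j e^{i\theta_j}$ and using the iterated circular sub-mean property of plurisubharmonic functions, one gets
$$
\int_{\CC^n} |F(z)|^p\, e^{-\frac{p|z|^2}{2}}\,dA(z)\ \geq\ \left(\dfrac{2\pi}{p}\right)^{n} |F(0)|^p,
$$
exactly in the manner already executed in the proof of Lemma~\ref{lem-F}.

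Next I would identify the two sides with the desired quantities. At $z=0$ the exponential factor in $F$ reduces to $e^{-|b|^2/2}$, so $|F(0)|^p = |f(b)|^p\, e^{-p|b|^2/2}$. On the integrand side, completing the square via the identity $p\,\re\langle z,b\rangle - \tfrac{p|b|^2}{2} - \tfrac{p|z|^2}{2} = -\tfrac{p|b-z|^2}{2}$ gives
$$
|F(z)|^p\, e^{-\frac{p|z|^2}{2}} = |f(b-z)|^p\, e^{-\frac{p|b-z|^2}{2}}.
$$
The translation $w = b-z$ then turns the integral into $(2\pi/p)^n \|f\|_{n,p}^p$, so rearranging yields $|f(b)|^p e^{-p|b|^2/2} \leq \|f\|_{n,p}^p$, and extracting $p$-th roots finishes the proof.

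There is not really a hard step here: the whole proof is bookkeeping around the plurisubharmonic sub-mean inequality with a Gaussian weight, and every piece of this computation has already appeared verbatim in the proof of Lemma~\ref{lem-F}. The only mild point to be careful about is the completion of the square, which must be written out cleanly so that the change of variables $w=b-z$ produces precisely $\|f\|_{n,p}^p$ on the right-hand side. (As an alternative presentation one could instead iterate Lemma~\ref{lem-F} — first apply part~(i) with $s=n-1$ to reduce to one variable, then invoke the classical one-dimensional Fock pointwise estimate \cite[Corollary~2.8]{KZ} — but the direct plurisubharmonic argument above is shorter and self-contained.)
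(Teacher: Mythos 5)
Your argument is correct, but it is not the route the paper takes. The paper proves Lemma~\ref{lem-F1} by a two-step reduction: it first applies Lemma~\ref{lem-F}(ii) with $s=1$ to conclude that the one-variable slice $f(\cdot,z'_{[1]})$ lies in $\calF^p(\CC)$ with $\|f(\cdot,z'_{[1]})\|_{1,p}\,e^{-|z'_{[1]}|^2/2}\le\|f\|_{n,p}$, and then quotes the known one-dimensional pointwise bound \cite[Corollary~2.8]{KZ} for that slice. You instead rerun the plurisubharmonic sub-mean-value computation of Lemma~\ref{lem-F} directly in all $n$ variables (the ``$s=n$'' case, which the paper's statement of Lemma~\ref{lem-F} excludes, so you are right to redo the argument rather than cite it): the identity $|F(z)|^pe^{-p|z|^2/2}=|f(b-z)|^pe^{-p|b-z|^2/2}$ checks out, $|F(0)|^p=|f(b)|^pe^{-p|b|^2/2}$, and the translation $w=b-z$ recovers $\|f\|_{n,p}^p$ exactly as claimed, so the inequality follows. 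What your version buys is self-containedness --- no appeal to the external one-variable result --- at the cost of repeating a computation the paper has already written out; the paper's version is shorter on the page precisely because it leans on Lemma~\ref{lem-F} and the cited corollary. Your parenthetical alternative at the end is, in fact, essentially the paper's actual proof.
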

\begin{proof}
By Lemma \ref{lem-F}, the function $f(\cdot,z'_{[1]}) \in \calF^p(\CC)$ and
$$
\|f(\cdot,z'_{[1]})\|_{1, p}\; e^{-\frac{|z'_{[1]}|^2}{2}} \leq \|f\|_{n, p}.
$$
From this and \cite[Corollary 2.8]{KZ} we have
$$
|f(z)|e^{-\frac{|z|^2}{2}} \leq \|f(\cdot,z'_{[1]})\|_{1, p}\; e^{-\frac{|z'_{[1]}|^2}{2}} \leq \|f\|_{n, p}.
$$
\end{proof}

\begin{lem}\label{lem-Fpq}
For every $ 0 < p < q < \infty$, $\calF^p(\CC^n) \subset \calF^q(\CC^n)$ and the inclusion is continuous. Moreover,
$$
\|f\|_{n, q} \leq \left(  \dfrac{q}{p} \right) ^{\frac{n}{q}} \|f\|_{n, p}, \text{ for all } f \in \calF^p(\CC^n).
$$
\end{lem}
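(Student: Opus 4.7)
The plan is to reduce the $L^q$-integral to an $L^p$-integral by bounding one factor of $|f|$ pointwise using Lemma \ref{lem-F1}, and letting the Gaussian weights conspire so that the remaining factor still has its natural Fock weight attached.

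More concretely, I would start by writing, for $z \in \CC^n$,
$$
|f(z)|^q \, e^{-\frac{q|z|^2}{2}} \;=\; \bigl(|f(z)| \, e^{-\frac{|z|^2}{2}}\bigr)^{q-p} \cdot |f(z)|^p \, e^{-\frac{p|z|^2}{2}}.
$$
The first factor is at most $\|f\|_{n,p}^{q-p}$ by Lemma \ref{lem-F1} (this is where the pointwise bound, itself a consequence of Lemma \ref{lem-F} via slicing, is essential). Substituting this pointwise estimate into the defining integral of $\|f\|_{n,q}^q$ gives
$$
\|f\|_{n,q}^q \;\leq\; \left(\frac{q}{2\pi}\right)^n \|f\|_{n,p}^{q-p} \int_{\CC^n} |f(z)|^p e^{-\frac{p|z|^2}{2}} \, dA(z) \;=\; \left(\frac{q}{p}\right)^n \|f\|_{n,p}^{q-p} \cdot \|f\|_{n,p}^p,
$$
after recognizing the remaining integral as $(2\pi/p)^n \|f\|_{n,p}^p$. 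Taking $q$-th roots yields the stated constant $(q/p)^{n/q}$ and the continuous inclusion $\calF^p(\CC^n) \hookrightarrow \calF^q(\CC^n)$.

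There is no real obstacle here once Lemma \ref{lem-F1} is in hand; the only subtlety is making sure the exponents in the product decomposition match so that the surviving weight $e^{-p|z|^2/2}$ is exactly the one used to define $\|f\|_{n,p}$. In particular, this argument is cleaner than an interpolation or Hölder-based approach because the pointwise bound eats precisely the excess weight $e^{-(q-p)|z|^2/2}$.
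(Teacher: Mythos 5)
Your proof is correct and is essentially identical to the paper's own argument: the same factorization $|f|^q e^{-q|z|^2/2} = \bigl(|f|e^{-|z|^2/2}\bigr)^{q-p}\cdot |f|^p e^{-p|z|^2/2}$, the same pointwise bound from Lemma \ref{lem-F1}, and the same bookkeeping of the normalizing constants to arrive at $(q/p)^{n/q}$.
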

\begin{proof}
For each function $f \in \calF^p(\CC^n)$, by Lemma \ref{lem-F1}, we have
\begin{align*}
\|f\|_{n, q}^q & =  \left( \dfrac{q}{2\pi} \right)^n \int_{\CC^n} |f(z)|^q e^{-\frac{q|z|^2}{2}}dA(z) \\
& = \left( \dfrac{q}{2\pi} \right)^n \int_{\CC^n} |f(z)|^{q-p} |f(z)|^p e^{-\frac{q|z|^2}{2}}dA(z) \\
& \leq \left( \dfrac{q}{2\pi} \right)^n \|f\|_{n, p}^{q-p} \int_{\CC^n} |f(z)|^p e^{-\frac{p|z|^2}{2}}dA(z)
 = \left( \dfrac{q}{p} \right)^n \|f\|_{n, p}^{q}.
\end{align*}
From this the desired results follow.
\end{proof}

Similarly to \cite[Lemmas~2.3 and 2.4]{TK-17}, we can easily prove the following lemmas.

\begin{lem}\label{lem-com}
Let $p, q \in (0, \infty)$, $T$ be a linear continuous operator from $\calO(\CC^n)$ into itself and $T: \calF^p(\CC^n) \to \calF^q(\CC^n)$ be well-defined. Then $T: \calF^p(\CC^n) \to \calF^q(\CC^n)$ is compact if and only if for every bounded sequence $(f_j)_j$ in $\calF^p(\CC^n)$ converging to $0$ in $\calO(\CC^n)$, the sequence $(Tf_j)_j$ converges to $0$ in $\calF^q(\CC^n)$.
\end{lem}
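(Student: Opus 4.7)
The argument follows the standard Montel-plus-pointwise-control template, split into the two implications.

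For the forward direction, suppose $T$ is compact. Let $(f_j)$ be a bounded sequence in $\calF^p(\CC^n)$ with $f_j \to 0$ in $\calO(\CC^n)$. By compactness, some subsequence $(Tf_{j_k})$ converges in $\calF^q(\CC^n)$ to a limit $g$. Applying Lemma \ref{lem-F1} in $\calF^q(\CC^n)$ yields $|Tf_{j_k}(z) - g(z)|e^{-|z|^2/2} \le \|Tf_{j_k} - g\|_{n,q} \to 0$, so $Tf_{j_k} \to g$ pointwise (in fact, locally uniformly). On the other hand, continuity of $T:\calO(\CC^n)\to\calO(\CC^n)$ forces $Tf_{j_k} \to 0$ in $\calO(\CC^n)$, hence $g \equiv 0$. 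The usual subsequence-of-subsequence argument then upgrades this to $Tf_j \to 0$ in $\calF^q(\CC^n)$ along the full sequence.

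For the reverse direction, assume the sequential criterion and take any bounded sequence $(f_j)$ in $\calF^p(\CC^n)$; I must extract a subsequence of $(Tf_j)$ converging in $\calF^q(\CC^n)$. Lemma \ref{lem-F1} gives $|f_j(z)|\le \|f_j\|_{n,p}\,e^{|z|^2/2}$, so $\{f_j\}$ is locally uniformly bounded on $\CC^n$. Montel's theorem produces a subsequence $(f_{j_k})$ converging in $\calO(\CC^n)$ to some $f\in\calO(\CC^n)$. Fatou's lemma applied to $|f_{j_k}(z)|^p e^{-p|z|^2/2}$ shows $f\in\calF^p(\CC^n)$ with $\|f\|_{n,p}\le \liminf_k \|f_{j_k}\|_{n,p}$, so $(f_{j_k}-f)$ is bounded in $\calF^p(\CC^n)$ and converges to $0$ in $\calO(\CC^n)$. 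The hypothesis then gives $T(f_{j_k}-f)\to 0$ in $\calF^q(\CC^n)$, i.e., $Tf_{j_k}\to Tf$.

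The main point to watch is the quasi-Banach range $0<p<1$, where $\calF^p(\CC^n)$ is only an $F$-space with metric $d(f,g)=\|f-g\|_{n,p}^p$; however, both Fatou's lemma and Montel's theorem are insensitive to $p$, and sequential compactness in an $F$-space still coincides with the usual notion, so no substantive modification is required. Everything else is bookkeeping, essentially identical to \cite[Lemma~2.3]{TK-17}.
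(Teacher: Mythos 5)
Your proof is correct, and it is essentially the argument the paper has in mind: the paper omits the proof of this lemma, citing \cite[Lemma~2.3]{TK-17}, whose proof is exactly this Montel--Fatou--pointwise-evaluation scheme (using Lemma~\ref{lem-F1} to pass between norm and locally uniform convergence, plus the subsequence-of-subsequence trick). Your remark that the case $0<p<1$ needs no substantive change is also accurate.
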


For an arbitrary operator $T: \calF^p(\CC^n) \to \calF^q(\CC^n)$ that would be not defined on $\calO(\CC^n)$, we only get the following result.

\begin{lem}\label{lem-com1}
Let $p, q \in (1, \infty)$. If a linear operator $T: \calF^p(\CC^n) \to \calF^q(\CC^n)$ is compact, then for every sequence $(w^{(j)})_j$ in $\CC^n$ with $|w^{(j)}| \to \infty$ as $j \to \infty$, the sequence $(Tk_{w^{(j)}})_j$ converges to $0$ in $\calF^q(\CC^n)$.
\end{lem}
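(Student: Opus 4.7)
The plan is to combine two standard ingredients: the reflexivity of $\calF^p(\CC^n)$ for $1<p<\infty$ together with the pointwise decay of the normalized kernels $k_{w^{(j)}}$, to deduce that $k_{w^{(j)}}$ tends weakly to $0$ in $\calF^p(\CC^n)$; and the fact that every compact linear operator between Banach spaces is completely continuous, i.e., sends weakly convergent sequences to norm convergent ones.

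First I would note that $\|k_{w^{(j)}}\|_{n,p}=1$ for every $j$, so $(k_{w^{(j)}})$ is bounded in $\calF^p(\CC^n)$, and that for each fixed $z\in \CC^n$,
\[
\bigl|k_{w^{(j)}}(z)\bigr|\;=\;\exp\bigl(\re\langle z,w^{(j)}\rangle-|w^{(j)}|^2/2\bigr)\;\leq\;e^{|z|\,|w^{(j)}|-|w^{(j)}|^2/2}\;\longrightarrow\;0
\]
as $j\to\infty$. Since $\calF^p(\CC^n)$ is a closed subspace of the reflexive Banach space $L^p\bigl(\CC^n,(p/2\pi)^n e^{-p|z|^2/2}\,dA\bigr)$, it is itself reflexive. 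I would then run the standard subsequence argument: from any subsequence of $(k_{w^{(j)}})$, extract a further subsequence converging weakly to some $f\in \calF^p(\CC^n)$; by Lemma \ref{lem-F1} the point evaluation at any $z$ is a continuous linear functional on $\calF^p(\CC^n)$, so $f(z)=\lim k_{w^{(j)}}(z)=0$ for every $z$, hence $f\equiv 0$. Therefore the full sequence $(k_{w^{(j)}})$ converges weakly to $0$ in $\calF^p(\CC^n)$, and the complete continuity of the compact operator $T$ yields $Tk_{w^{(j)}}\to 0$ in the norm of $\calF^q(\CC^n)$.

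The main technical point is the weak convergence $k_{w^{(j)}}\to 0$: unlike the Hilbert case $p=q=2$, where one could simply test against reproducing kernels which are total in $\calF^2(\CC^n)$, for general $p\in(1,\infty)$ there is no explicit dense subset of the dual at hand, so reflexivity combined with the subsequence-of-subsequence argument is essential. This is also precisely the place where the hypothesis $p>1$ (rather than $p\geq 1$) is used, since $\calF^1(\CC^n)$ is not reflexive; the assumption $q>1$ plays no special role beyond ensuring we are in the Banach setting where compactness implies complete continuity.
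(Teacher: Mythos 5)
Your proof is correct and is essentially the argument the paper has in mind: the paper omits the proof entirely, deferring to the one-variable case in \cite{TK-17}, where the same two ingredients (weak convergence of $k_{w^{(j)}}$ to $0$, obtained from reflexivity of $\calF^p$ for $1<p<\infty$ together with the continuity of point evaluations from Lemma \ref{lem-F1}, followed by the complete continuity of compact operators between Banach spaces) are used. Your subsequence-of-subsequences argument and your closing remark on why $p>1$ is needed are both accurate, so there is nothing to add.
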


Let  $\psi: \CC^n \to \CC$ be a nonzero entire function and $\varphi: \CC^n \to \CC^n$ a holomorphic mapping. The \textit{weighted composition operator} $W_{\psi,\varphi}$ induced by $\psi, \varphi$ is defined by $W_{\psi,\varphi}f = \psi \cdot (f \circ \varphi)$. When the function $\psi$ is identically $1$, the operator $W_{\psi,\varphi}$ reduces to the \textit{composition operator} $C_{\varphi}$. The study of (weighted) composition operators have received a special attention of many authors during the past several decades (see \cite{CM, Sha} and references therein). A main problem in the investigation of such operators is to describe operator theoretic properties of  $C_{\varphi}$ and $W_{\psi,\varphi}$ in terms of function theoretic properties of $\varphi$ and $\psi$.

As in \cite{T-14, TK-17}, the following quantities play an important role in this paper:
$$
m_z(\psi,\varphi) = |\psi(z)|e^{\frac{|\varphi(z)|^2 - |z|^2}{2}},\ z \in \CC^n,
$$
and
$$
m(\psi,\varphi) = \sup_{z \in \CC^n} m_z(\psi,\varphi).
$$

The following result extends \cite[Proposition~2.1]{T-14} to the case of several variables and plays a crucial role in our study.

\begin{prop} \label{prop-varphi}
Let $\psi$ be a nonzero entire function on $\CC^n$ and $\varphi: \CC^n \to \CC^n$ a holomorphic mapping such that $m(\psi, \varphi) < \infty$. Then $\varphi(z) = Az + b$, where $A$ is an $n \times n$ matrix with $\|A\| \leq 1$ and  $b$ is an $n \times 1$ vector.
Moreover, if $A$ is a unitary matrix, then
$$
\psi(z)  = \psi(z^0) e^{\langle z - z^0, - A^*b \rangle}, \text{ for all } z \in \CC^n,
$$
where $z^0$ is some point in $\CC^n$ satisfying $\psi(z^0) \neq 0$.
\end{prop}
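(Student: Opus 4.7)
The plan is to argue in three stages: reduce to the one-variable setting to show $\varphi$ is affine, then obtain $\|A\|\le 1$, and finally treat the rigid unitary case via Liouville.

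For the first stage I would restrict $\psi$ and $\varphi$ along complex lines through the origin. For a unit vector $v\in\CC^n$, set $\tilde\psi_v(\zeta)=\psi(\zeta v)$ and $\tilde\varphi_v(\zeta)=\varphi(\zeta v)$. Since $|\zeta v|=|\zeta|$ and $|\tilde\varphi_{v,k}|\le|\tilde\varphi_v|$ for each coordinate $k$, the hypothesis yields $|\tilde\psi_v(\zeta)|\,e^{(|\tilde\varphi_{v,k}(\zeta)|^2-|\zeta|^2)/2}\le m(\psi,\varphi)$. For any $v$ with $\psi(v)\ne 0$---an open dense set of directions, since $\psi\not\equiv 0$---the function $\tilde\psi_v$ is nonzero entire on $\CC$, and the one-variable result \cite[Proposition~2.1]{T-14} applied componentwise forces each $\tilde\varphi_{v,k}$ to be a polynomial of degree at most one in $\zeta$. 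Expanding each $\varphi_k$ as the sum of its homogeneous parts $\sum_m P_m^{(k)}$ at the origin, this gives $P_m^{(k)}(v)=0$ for all $m\ge 2$ on a dense open set of $v$; hence $P_m^{(k)}\equiv 0$ for $m\ge 2$, and $\varphi(z)=Az+b$ for some $n\times n$ matrix $A$ and $b\in\CC^n$.

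For the second stage I would use the singular value decomposition $A=U\Sigma V$ with $U,V$ unitary and $\Sigma=\mathrm{diag}(\sigma_1,\ldots,\sigma_n)$, and perform the unitary change of variable $z=V^*\zeta$. Writing $c=U^*b$ and $\tilde\psi(\zeta)=\psi(V^*\zeta)$, the hypothesis becomes
$$|\tilde\psi(\zeta)|\,\exp\!\Bigl(\tfrac12\sum_{k=1}^n(\sigma_k^2-1)|\zeta_k|^2+\re\!\sum_k\sigma_k\zeta_k\bar c_k+\tfrac12|b|^2\Bigr)\le m(\psi,\varphi).$$
If some $\sigma_{k_0}>1$, then freezing the remaining coordinates at an $\eta$ for which the slice $\zeta_{k_0}\mapsto\tilde\psi(\eta_1,\ldots,\zeta_{k_0},\ldots,\eta_n)$ is not identically zero produces an entire function of $\zeta_{k_0}$ dominated by $\exp\bigl(-(\sigma_{k_0}^2-1)|\zeta_{k_0}|^2/2+O(|\zeta_{k_0}|)\bigr)$. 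Liouville then forces this slice to vanish identically, contradicting the choice of $\eta$. Hence every $\sigma_k\le 1$, i.e.\ $\|A\|\le 1$.

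For the third stage, if $A$ is unitary then $|Az+b|^2-|z|^2=2\re\langle z,A^*b\rangle+|b|^2$, and the hypothesis collapses to $\bigl|\psi(z)\,e^{\langle z,A^*b\rangle}\bigr|\le m(\psi,\varphi)\,e^{-|b|^2/2}$. Since $\langle z,A^*b\rangle$ is linear in $z$, the function $\psi(z)e^{\langle z,A^*b\rangle}$ is a bounded entire function on $\CC^n$, hence constant by Liouville's theorem. Choosing any $z^0$ with $\psi(z^0)\ne 0$ to fix the constant yields $\psi(z)=\psi(z^0)e^{\langle z-z^0,-A^*b\rangle}$. I expect the main obstacle to be the second stage: $\psi$ may have an arbitrarily large zero locus, so the frozen coordinates $\eta$ must be chosen so that the decaying slice is actually nonzero---otherwise the Liouville step cannot produce the required contradiction.
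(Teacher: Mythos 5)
Your proof is correct, and the core of Stage 1 (restricting to complex lines through the origin and invoking \cite[Proposition~2.1]{T-14} componentwise, then killing the higher homogeneous parts) is exactly the paper's mechanism. Where you genuinely diverge is in how the nonvanishing of the restricted symbol is secured and in Stages 2--3. The paper handles nonvanishing by a translation trick: it first proves everything under the extra hypothesis $\psi(0)\neq 0$, and then reduces the general case to this one by passing to $\psi_0(z)=\psi(z+z^0)e^{-\langle z,z^0\rangle}$, $\varphi_0(z)=\varphi(z+z^0)$, which multiplies $m(\psi,\varphi)$ by the constant $e^{|z^0|^2/2}$ and makes $\psi_0(0)\neq 0$; this lets it use \emph{every} direction $\zeta\in\mathbb S^n$, not just a dense set, and in particular lets it prove $\|A\|\le 1$ by the one-line argument that if $|A\zeta|>1$ then $|\psi_\zeta(\lambda)|\le m(\psi,\varphi)e^{(|\lambda|^2-|\lambda A\zeta+b|^2)/2}\to 0$, forcing $\psi_\zeta\equiv 0$ against $\psi_\zeta(0)\neq 0$ --- no singular value decomposition is needed at this stage (the SVD only enters later, in Section~3). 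Your density-of-good-directions route works, but it is the one place that needs a word of justification (the zero variety of $\psi$ has real codimension $2$, so cannot contain an open piece of the sphere), and it forces you into the heavier SVD-plus-nonvanishing-slice argument for $\|A\|\le 1$, precisely because the bad direction need not be a good one; as you note, that argument still closes since a nonvanishing point of $\widetilde\psi$ supplies the frozen coordinates $\eta$. For the unitary case your direct $n$-dimensional Liouville argument on the bounded entire function $\psi(z)e^{\langle z,A^*b\rangle}$ is clean and arguably more transparent than the paper's route, which reads the formula off the one-variable statement at the base point $0$ and then undoes the translation. Net effect: the paper's translation buys uniformity over all directions and an elementary norm bound; your version avoids the translation at the cost of a density lemma and a slicing argument.
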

\begin{proof}
We divide the proof into two steps.

\textbf{Step 1.} Suppose that $\psi(0) \neq 0$. Let $\varphi = (\varphi_1,...,\varphi_n)$, for each $j = 1, 2,...,n$ and $\zeta \in \mathbb S^n = \{ z \in \CC^n: |z| = 1\}$, put
$$
\psi_{\zeta}(\lambda) = \psi(\lambda \zeta) \text{ and } \varphi_{j,\zeta}(\lambda) = \varphi_j(\lambda \zeta),\ \lambda \in \CC.
$$
If $\zeta \in \mathbb S^n$ and $1 \leq j \leq n$, then we have
\begin{align*}
m(\psi_{\zeta},\varphi_{j,\zeta}) = \sup_{\lambda \in \CC} m_{\lambda}(\psi_{\zeta},\varphi_{j,\zeta}) \leq \sup_{\lambda \in \CC} m_{\lambda \zeta} (\psi,\varphi) \leq m(\psi,\varphi) < \infty.
\end{align*}
Since $\psi_{\zeta}(0) = \psi(0) \neq 0$, the last inequality together with \cite[Proposition 2.1]{T-14} implies that $\varphi_{j,\zeta}(\lambda) = a_{j,\zeta} \lambda + b_{j,\zeta}$ with $|a_{j,\zeta}| \leq 1$.

On the other hand, if $\varphi_j(z)$ has homogeneous expansion $\sum_{s = 0}^{\infty} \Phi_s(z)$, then for each $\zeta \in \mathbb S^n$, $\varphi_{j,\zeta}(\lambda) = \sum_{s = 0}^{\infty} \lambda^s \Phi_s(\zeta)$. Thus, $\Phi_s(\zeta) = 0$ for each $\zeta \in \mathbb S^n$ and $s \geq 2$. Hence, for each $s \geq 2$ and $z \in \mathbb C^n \setminus \{0\}$, $\Phi_s(z) = |z|^s \Phi_s(z/|z|) = 0$. That is, $\Phi_s \equiv 0$ for all $s \geq 2$.

Consequently, $\varphi(z) = Az + b$, where $A$ is an $n \times n$ matrix and  $b$ is an $n \times 1$ vector.

We show that $\|A\| \leq 1$ by contradiction. Assume that $\|A\| > 1$, that is, there exists $\zeta \in \mathbb S^n$ such that $|A\zeta| > 1$. Then
\begin{align*}
|\psi_{\zeta}(\lambda)| \leq m(\psi,\varphi) e^{\frac{|\lambda \zeta|^2 - |\lambda A\zeta + b|^2 }{2}} = m(\psi,\varphi) e^{\frac{|\lambda|^2 - |\lambda A\zeta + b|^2 }{2}} \to 0 \text{ as } \lambda \to \infty.
\end{align*}
This means that $\psi_{\zeta} \equiv 0$ on $\CC$, which is a contradiction, since $\psi_{\zeta}(0) \neq 0$.

Moreover, if $A$ is unitary matrix, then it is easy to see that
$$
\psi(z) = \psi(0) e^{- \langle Az, b \rangle} = \psi(0) e^{\langle z, - A^*b \rangle}.
$$

\textbf{Step 2.} Suppose that $\psi(z^0) \neq 0$ for some $z^0 \in \CC^n$.
Put
$$
\varphi_{0}(z) = \varphi(z + z^0) \text{ and } \psi_{0}(z) = \psi(z + z^0) e^{-\langle z, z^0 \rangle}, \ z \in \CC^n.
$$
Then for every $z \in \CC^n$, we have
\begin{align*}
|\psi_{0}(z)|e^{\frac{|\varphi_{0}(z)|^2 - |z|^2}{2}} & = \left|\psi(z + z^0)e^{-\langle z, z^0 \rangle}\right|e^{\frac{\left|\varphi(z+z^0)\right|^2 - |z|^2}{2}}\\
& = \left|\psi(z + z^0)\right| e^{\frac{\left|\varphi(z+z^0)\right|^2 - \left|z+z^0\right|^2}{2}} e^{\frac{\left|z^0\right|^2}{2}}.
\end{align*}
Hence,
\begin{align*}
\sup_{z \in \CC^n}|\psi_{0}(z)|e^{\frac{|\varphi_{0}(z)|^2 - |z|^2}{2}} & = e^{\frac{\left|z^0\right|^2}{2}} \sup_{z \in \CC^n}\left|\psi(z + z^0)\right| e^{\frac{\left|\varphi(z+z^0)\right|^2 - \left|z+z^0\right|^2}{2}} \\
&= e^{\frac{\left|z^0\right|^2}{2}} m(\psi,\varphi) < \infty.
\end{align*}
Since $\psi_0(0) = \psi(z^0) \neq 0$, by Step 1, $\varphi_0(z) = Az + b^0$, where $A$ is an $n \times n$ matrix $A$ with $\|A\| \leq 1$ and $b^0$ is an $n \times 1$ vector.
Then
$$
\varphi(z) = \varphi_0(z - z^0) = A(z - z^0) + b^0 = Az + b \text{ with } b = b^0 - Az^0.
$$
Moreover, if $A$ is a unitary matrix, then, again by Step 1,
$$
 \psi_0(z) = \psi_0(0) e^{\langle z, -A^*b^0 \rangle} \text{ for all } z \in \CC^n.
$$
Therefore,
\begin{align*}
\psi(z) & = \psi_0(z - z^0) e^{\langle z - z^0, z^0 \rangle} = \psi_0(0) e^{\langle z - z^0, - A^*b^0 \rangle }e^{\langle z - z^0, z^0 \rangle} \\
& = \psi(z^0) e^{\langle z - z^0, z^0 - A^*b^0 \rangle } = \psi(z^0) e^{\langle z - z^0, - A^* b \rangle }, \ \forall z \in \CC^n.
\end{align*}
\end{proof}

Particularly, when $\psi \equiv { \rm const }$ on $\CC^n$, by Proposition \ref{prop-varphi} and the proof of \cite[Theorem 1]{CMS-03}, we get the following result.
\begin{cor}\label{cor-varphi}
Let $\varphi: \CC^n \to \CC^n$ be a holomorphic mapping with $m(1, \varphi) < \infty$. Then $\varphi(z) = Az + b$, where $A$ is an $n \times n$ matrix with $\|A\| \leq 1$ and  $b$ is an $n \times 1$ vector. Moreover, if $|A\zeta| = |\zeta|$ for some $\zeta$ in $\CC^n$, then $\langle A\zeta, b \rangle =0$.
\end{cor}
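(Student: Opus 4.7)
The first assertion is essentially a direct application of Proposition \ref{prop-varphi}. I would take $\psi \equiv 1$ (a nonzero entire function on $\CC^n$), observe that $m(\psi,\varphi)=m(1,\varphi)<\infty$ by hypothesis, and read off from the proposition that $\varphi(z)=Az+b$ with $\|A\|\le 1$. No further work is needed for this part.

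For the \emph{moreover} part, the plan is to test the condition $m(1,\varphi)<\infty$ along the complex line $\lambda\mapsto\lambda\zeta$, $\lambda\in\CC$. If $\zeta=0$ the conclusion is vacuous, so assume $\zeta\neq 0$. Writing $\varphi(\lambda\zeta)=\lambda A\zeta+b$ and expanding the squared norm gives
\begin{align*}
|\varphi(\lambda\zeta)|^2-|\lambda\zeta|^2
&=|\lambda|^2|A\zeta|^2+2\,\re\bigl(\langle \lambda A\zeta,b\rangle\bigr)+|b|^2-|\lambda|^2|\zeta|^2\\
&=2\,\re\bigl(\lambda\langle A\zeta,b\rangle\bigr)+|b|^2,
\end{align*}
where the hypothesis $|A\zeta|=|\zeta|$ is precisely what makes the quadratic-in-$\lambda$ terms cancel. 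The finiteness of $m(1,\varphi)$ forces the left-hand side to be bounded above uniformly in $\lambda\in\CC$, so the real-linear functional $\lambda\mapsto \re(\lambda\langle A\zeta,b\rangle)$ must be bounded above on $\CC$. Choosing $\lambda=t\,\overline{\langle A\zeta,b\rangle}$ with $t\to+\infty$ shows this is impossible unless $\langle A\zeta,b\rangle=0$, which is the conclusion.

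There is no real obstacle here: the argument is a one-variable reduction combined with the trivial observation that a nonzero $\CC$-linear function on $\CC$ is unbounded above in its real part. The only point that requires a moment of care is the convention for the inner product $\langle\cdot,\cdot\rangle$ used in the paper (conjugate-linear in the second slot), which makes $\langle\lambda A\zeta,b\rangle=\lambda\langle A\zeta,b\rangle$ and keeps the displayed computation clean.
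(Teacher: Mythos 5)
Your proof is correct and follows essentially the same route as the paper, which obtains the first assertion from Proposition \ref{prop-varphi} with $\psi\equiv 1$ and the \emph{moreover} part by citing the proof of Theorem 1 in \cite{CMS-03} --- precisely the line-restriction argument you spell out. The computation $|\varphi(\lambda\zeta)|^2-|\lambda\zeta|^2=2\,\re\bigl(\lambda\langle A\zeta,b\rangle\bigr)+|b|^2$ and the unboundedness conclusion are exactly right.
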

\section{Boundedness and Compactness}
In this section we study the boundedness and compactness of weighted composition operators.
Proposition \ref{prop-varphi} give us the following necessary condition.

\begin{prop} \label{prop-nec}
Let $p, q \in (0,\infty)$. If the operator $W_{\psi, \varphi}: \calF^p(\CC^n) \to \calF^q(\CC^n)$ is bounded, then $\psi \in \calF^q(\CC^n)$ and $m(\psi,\varphi) < \infty$.

In this case, $\varphi(z) = Az + b$, where $A$ is an $n \times n$ matrix $A$ with $\|A\| \leq 1$ and $b$ is an $n \times 1$ vector.
\end{prop}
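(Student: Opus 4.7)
The plan is to derive the two conditions $\psi\in\calF^q(\CC^n)$ and $m(\psi,\varphi)<\infty$ from boundedness by testing $W_{\psi,\varphi}$ on two natural families of functions, and then to invoke Proposition~\ref{prop-varphi} for the affine structure of $\varphi$.

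First I would test on the constant function $\mathbf{1}\in\calF^p(\CC^n)$. Since $W_{\psi,\varphi}\mathbf{1}=\psi$ and $\|\mathbf 1\|_{n,p}=1$, boundedness gives directly $\psi=W_{\psi,\varphi}\mathbf{1}\in\calF^q(\CC^n)$ with $\|\psi\|_{n,q}\le\|W_{\psi,\varphi}\|$.

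The main point is the estimate $m(\psi,\varphi)<\infty$. For this I would test on the normalized reproducing kernels $k_w$, which satisfy $\|k_w\|_{n,p}=1$, so $\|W_{\psi,\varphi}k_w\|_{n,q}\le\|W_{\psi,\varphi}\|$ uniformly in $w$. Combining this with the pointwise bound from Lemma~\ref{lem-F1} applied to $W_{\psi,\varphi}k_w$, I obtain, for every $z,w\in\CC^n$,
\begin{align*}
|\psi(z)|\,\bigl|k_w(\varphi(z))\bigr|\,e^{-|z|^2/2}
&=\bigl|(W_{\psi,\varphi}k_w)(z)\bigr|e^{-|z|^2/2}\\
&\le\|W_{\psi,\varphi}k_w\|_{n,q}\le\|W_{\psi,\varphi}\|.
\end{align*}
Writing $|k_w(\varphi(z))|=e^{\re\langle\varphi(z),w\rangle-|w|^2/2}$ and choosing $w=\varphi(z)$ collapses the exponential into $e^{|\varphi(z)|^2/2}$, so the inequality becomes
$$
m_z(\psi,\varphi)=|\psi(z)|\,e^{(|\varphi(z)|^2-|z|^2)/2}\le\|W_{\psi,\varphi}\|
$$
for every $z\in\CC^n$. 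Taking supremum over $z$ yields $m(\psi,\varphi)\le\|W_{\psi,\varphi}\|<\infty$.

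Finally, since $\psi$ is nonzero by hypothesis and $m(\psi,\varphi)<\infty$, Proposition~\ref{prop-varphi} applies and forces $\varphi(z)=Az+b$ with $\|A\|\le 1$ and $b\in\CC^n$. I do not expect any serious obstacle: the only nontrivial idea is the kernel-test trick combined with the pointwise growth bound, and the affine structure is then handed to us by Proposition~\ref{prop-varphi}.
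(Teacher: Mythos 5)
Your proposal is correct and follows essentially the same route as the paper: testing on the constant function to get $\psi\in\calF^q(\CC^n)$, testing on the normalized kernels $k_w$ combined with the pointwise bound of Lemma~\ref{lem-F1}, choosing $w=\varphi(z)$ to obtain $m(\psi,\varphi)\le\|W_{\psi,\varphi}\|$, and then invoking Proposition~\ref{prop-varphi}. No gaps.
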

\begin{proof}
Obviously, $\psi = W_{\psi,\varphi}(1) \in \calF^q(\CC^n)$. Moreover, for every $w, z \in \CC^n$, by Lemma \ref{lem-F1} and the fact that $\|k_w\|_{n,p} = 1$, we have
\begin{align*}
\|W_{\psi, \varphi}\| & \geq \|W_{\psi, \varphi}k_w\|_{n, q} \geq |W_{\psi,\varphi}k_w(z)|e^{-\frac{|z|^2}{2}} \\
& = |\psi(z) e^{\langle \varphi(z),w \rangle - \frac{|w|^2}{2}}|e^{-\frac{|z|^2}{2}}.
\end{align*}
In particular, with $w = \varphi(z)$, the last inequality becomes
$$
\|W_{\psi, \varphi}\| \geq |\psi(z)| e^{\frac{|\varphi(z)|^2 - |z|^2}{2}}, \ \forall z \in \CC^n,
$$
which implies that $m(\psi, \varphi) \leq \|W_{\psi, \varphi}\| < \infty$.
Hence, by Proposition \ref{prop-varphi}, $\varphi(z) = Az + b$ with some $n \times n$ matrix $A$, $\|A\| \leq 1$, and $n \times 1$ vector $b$.
\end{proof}

As we see below, in general, the necessary condition in Proposition \ref{prop-nec} is not sufficient for the boundedness of the operator $W_{\psi, \varphi}: \calF^p(\CC^n) \to \calF^q(\CC^n)$. Nevertheless, this condition allows us to be only interested in those operators $W_{\psi, \varphi}$ which are induced by nonzero entire functions $\psi \in \calF^q(\CC^n)$ and
mappings $\varphi(z) = Az + b$, where $A$ is an $n \times n$ matrix with $\|A\| \leq 1$ and $b$ is an $n \times 1$ vector.

Firstly, we consider the trivial case when $A$ is the zero matrix, i.e. $\varphi(z) = b$ for all $z \in \CC^n$. In this case the criterion is rather easy to prove.

\begin{prop} \label{prop-zero}
Let $p, q \in (0, \infty)$. Suppose that $\psi$ is a nonzero entire function on $\CC^n$ and $\varphi(z) \equiv b$ on $\CC^n$ with an $n \times 1$ vector $b$. Then the following statements are equivalent:
\begin{itemize}
\item[(i)] $W_{\psi, \varphi}: \calF^p(\CC^n) \to \calF^q(\CC^n)$ is bounded;
\item[(ii)] $W_{\psi, \varphi}: \calF^p(\CC^n) \to \calF^q(\CC^n)$ is compact;
\item[(iii)] $\psi \in \calF^q(\CC^n)$.
\end{itemize}
In this case,
$$
\|W_{\psi, \varphi}\| = e^{\frac{|b|^2}{2}}\|\psi\|_{n, q}.
$$
\end{prop}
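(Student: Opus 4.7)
The key observation is that when $\varphi \equiv b$, the operator collapses to rank one: for every $f \in \calF^p(\CC^n)$ and every $z \in \CC^n$,
$$
W_{\psi,\varphi} f(z) = \psi(z)\,f(b),
$$
so $W_{\psi,\varphi}f = f(b)\,\psi$. The plan is to read off each of the three implications from this formula together with the pointwise estimate in Lemma \ref{lem-F1}.

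First I would prove (iii) $\Rightarrow$ (i) with the norm equality. Taking $\calF^q$-norms in the identity $W_{\psi,\varphi}f = f(b)\psi$ gives
$$
\|W_{\psi,\varphi}f\|_{n,q} = |f(b)|\cdot\|\psi\|_{n,q},
$$
and Lemma \ref{lem-F1} applied to $f$ yields $|f(b)| \leq \|f\|_{n,p}\,e^{|b|^2/2}$. Hence
$$
\|W_{\psi,\varphi}f\|_{n,q} \leq e^{\frac{|b|^2}{2}}\|\psi\|_{n,q}\,\|f\|_{n,p},
$$
showing both that $W_{\psi,\varphi}$ is bounded and that $\|W_{\psi,\varphi}\| \leq e^{|b|^2/2}\|\psi\|_{n,q}$. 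For the reverse inequality I plug in the normalized reproducing kernel $f = k_b$: since $\|k_b\|_{n,p} = 1$ and $k_b(b) = e^{|b|^2/2}$, I obtain $\|W_{\psi,\varphi}k_b\|_{n,q} = e^{|b|^2/2}\|\psi\|_{n,q}$, which saturates the bound.

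Next (i) $\Rightarrow$ (iii) is immediate from Proposition \ref{prop-nec} (or directly from $\psi = W_{\psi,\varphi}(\mathbf{1})$, since the constant function $\mathbf{1}$ lies in every $\calF^p(\CC^n)$ only after noting that $\mathbf{1}$ is not in $\calF^p$; so I would just cite Proposition \ref{prop-nec}). Finally (iii) $\Rightarrow$ (ii) follows because the range of $W_{\psi,\varphi}$ is contained in the one-dimensional subspace $\Span\{\psi\}$, so the operator has rank one and is therefore compact (this argument works equally well in the Banach range $p,q \geq 1$ and in the quasi-Banach range $0 < p < 1$ or $0 < q < 1$, since any continuous finite-rank linear map between metric linear spaces is compact). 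The implication (ii) $\Rightarrow$ (i) is automatic.

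There is no real obstacle here; the only minor point of care is to make sure the norm equality is stated for all $p,q \in (0,\infty)$ at once, which is handled uniformly by the formula $W_{\psi,\varphi}f = f(b)\psi$ and Lemma \ref{lem-F1}, and to note that rank-one continuity suffices for compactness in the metric (non-locally-convex) setting when $p < 1$ or $q < 1$.
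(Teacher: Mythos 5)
Your proof is correct and follows essentially the same route as the paper's: the rank-one identity $W_{\psi,\varphi}f = f(b)\,\psi$, Lemma \ref{lem-F1} for the upper bound, the test function $k_b$ for the lower bound, Proposition \ref{prop-nec} for (i)$\Rightarrow$(iii), and compactness of finite-rank operators. One small slip in your parenthetical aside: the constant function $\mathbf{1}$ \emph{does} belong to every $\calF^p(\CC^n)$ (the Gaussian weight is integrable), which is precisely why $\psi = W_{\psi,\varphi}(\mathbf{1})$ works in Proposition \ref{prop-nec}; since you fall back on citing that proposition anyway, nothing is affected.
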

\begin{proof} (ii) $\Longrightarrow$ (i) is obvious and (i) $\Longrightarrow$ (iii) immediately follows from Proposition \ref{prop-nec}. We need to prove only (iii) $\Longrightarrow$ (ii).

Suppose that $\psi \in \calF^q(\CC^n)$.
Obviously, $W_{\psi,\varphi}(f)(z) = \psi(z)f(b)$ for all $f \in \calO(\CC^n)$.
Then, by Lemma \ref{lem-F1}, for all $f \in \calF^p(\CC^n)$,
\begin{align*}
\|W_{\psi,\varphi}(f)\|_{n, q} = \|\psi\|_{n, q} |f(b)| \leq \|\psi\|_{n, q} \|f\|_{n, p}\; e^{\frac{|b|^2}{2}}.
\end{align*}
This means that $W_{\psi, \varphi}: \calF^p(\CC^n) \to \calF^q(\CC^n)$ is bounded and
$$
\|W_{\psi,\varphi}\| \leq e^{\frac{|b|^2}{2}}\|\psi\|_{n, q}.
$$
Moreover, $W_{\psi, \varphi}$ has rank $1$, then it is compact.

On the other hand,
\begin{align*}
\|W_{\psi,\varphi}(k_b)\|_{n, q} = \|\psi\|_{n, q} |k_b(b)| = \|\psi\|_{n, q} e^{\frac{|b|^2}{2}},
\end{align*}
which gives
$$
\|W_{\psi, \varphi}\| = e^{\frac{|b|^2}{2}}\|\psi\|_{n, q}.
$$
\end{proof}

The case $A \not \equiv 0$ is much more difficult, because $A$ is not necessarily diagonal or invertible. In order to overcome this difficulty we make use of the so-called singular value decomposition of an $n \times n$ matrix whose proof can be found in \cite[Theorem~2.6.3]{HR-90}.

\begin{lem}
If $A$ is an $n \times n$ matrix of rank $s$, then $A$ can be written as $A = V \widetilde{A} U$, where $V, U$ are $n \times  n$ unitary matrices, and $\widetilde{A}$ is a diagonal matrix $(\widetilde{a}_{ij})$ with $\widetilde{a}_{11} \geq \widetilde{a}_{22} \geq ... \geq \widetilde{a}_{ss} \geq \widetilde{a}_{s+1, s+1} = ... = \widetilde{a}_{nn} = 0$. The $\widetilde{a}_{ii}$ are the non-negative square roots of the eigenvalues of $AA^*$; if we require that they are listed in decreasing order, then $\widetilde{A}$ is uniquely determined from $A$.
\end{lem}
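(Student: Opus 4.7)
The plan is to reduce the statement to the spectral theorem applied to the positive semidefinite Hermitian matrix $AA^*$. Since $AA^*$ is Hermitian and positive semidefinite with $\operatorname{rank}(AA^*) = \operatorname{rank}(A) = s$, there exist an $n\times n$ unitary matrix $V$ and real numbers $\lambda_1 \geq \cdots \geq \lambda_s > 0 = \lambda_{s+1} = \cdots = \lambda_n$ such that $V^*(AA^*)V = \operatorname{diag}(\lambda_1,\dots,\lambda_n)$. Setting $\widetilde a_{ii} := \sqrt{\lambda_i}$ produces the diagonal matrix $\widetilde A$ with the required ordering, and these entries are, by construction, the non-negative square roots of the eigenvalues of $AA^*$, hence uniquely determined once one requires them to be listed in decreasing order.

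Next I would construct $U$. Let $v_1,\dots,v_n$ denote the columns of $V$, and for $i = 1,\dots,s$ define $u_i := \widetilde a_{ii}^{-1} A^* v_i$. Using $AA^* v_i = \lambda_i v_i$ and the self-adjointness of $AA^*$, one computes
$$
\langle u_i, u_j\rangle = \widetilde a_{ii}^{-1}\widetilde a_{jj}^{-1}\langle A^*v_i, A^*v_j\rangle = \widetilde a_{ii}^{-1}\widetilde a_{jj}^{-1}\langle AA^*v_i, v_j\rangle = \delta_{ij},
$$
so $u_1,\dots,u_s$ is an orthonormal set. Since $\{u_1,\dots,u_s\}\subset \operatorname{ran}(A^*)$ and $\operatorname{rank}(A^*) = s$, this set is in fact a basis of $\operatorname{ran}(A^*)$. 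I would then extend it to an orthonormal basis $u_1,\dots,u_n$ of $\CC^n$ and let $U$ be the unitary matrix whose rows are $u_1^*,\dots,u_n^*$, so that $U^*$ has $u_i$ as its $i$-th column.

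It remains to verify $A = V\widetilde A U$, equivalently $AU^* = V\widetilde A$, which amounts to $Au_i = \widetilde a_{ii} v_i$ for every $i$. For $i \leq s$ this is immediate: $Au_i = \widetilde a_{ii}^{-1} AA^*v_i = \widetilde a_{ii}^{-1}\lambda_i v_i = \widetilde a_{ii} v_i$. For $i > s$, the vectors $u_i$ were chosen orthogonal to $\operatorname{ran}(A^*) = \ker(A)^\perp$, hence $u_i \in \ker(A)$ and $Au_i = 0 = \widetilde a_{ii} v_i$. The only mildly delicate points are the identity $\operatorname{rank}(AA^*) = \operatorname{rank}(A)$, a consequence of $\|A^*x\|^2 = \langle AA^*x, x\rangle$ which gives $\ker(AA^*) = \ker(A^*)$, together with the observation that the extension of $u_1,\dots,u_s$ lies automatically in $\ker(A)$. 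Both are standard, so there is no serious obstacle; the proof is essentially careful bookkeeping of the spectral theorem for $AA^*$, which is why the authors merely cite it from \cite{HR-90}.
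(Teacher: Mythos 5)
Your construction is correct; it is the standard derivation of the singular value decomposition from the spectral theorem applied to $AA^*$, and all the steps (orthonormality of the $u_i$, the identification $\operatorname{ran}(A^*)=\ker(A)^{\perp}$, and the verification $Au_i=\widetilde a_{ii}v_i$) check out. Note, however, that the paper does not prove this lemma at all: it simply cites \cite[Theorem~2.6.3]{HR-90}, so there is no in-paper argument to compare against; your proof supplies what the citation delegates. The only point you argue in one direction but not the other is uniqueness: you show the constructed $\widetilde A$ has the stated entries, but for the claim that \emph{every} factorization $A=V\widetilde A U$ of this form has the same $\widetilde A$ you should add the one-line observation that such a factorization forces $AA^*=V\widetilde A^{2}V^*$, so the $\widetilde a_{ii}^{2}$ are necessarily the eigenvalues of $AA^*$.
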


Let $\calW_q$ be the set of all pairs $(\psi,\varphi)$ consisting of a nonzero entire function $\psi$ in $\calF^q(\CC^n)$ and a mapping $\varphi(z) = Az + b$ with a nonzero $n \times n$ matrix $A$ satisfying $0 < \|A\| \leq 1$ and an $n \times 1$ vector $b$.

We denote by $\mathcal V_{q,s}$ the subset of $\calW_q$ consisting of all pairs $(\psi,\varphi)$ in $\calW_q$ with $\varphi(z) = Az + b$, where $A$ is a diagonal matrix $(a_{ij})$ of $\textnormal{rank} A = s > 0$ and
$$
1 \geq a_{11} \geq a_{22} \geq ... \geq a_{ss} \geq a_{s+1, s+1} = ... = a_{nn} = 0.
$$
Note that for each $(\psi,\varphi)$ in $\mathcal V_{q,s}$ and $f \in \calO(\CC^n)$, we have
\begin{align}\label{eq-normf}
\|W_{\psi,\varphi}f\|_{n, q} =& \left( \left( \dfrac{q}{2\pi} \right)^n \int_{\CC^n} |\psi(z)|^q |f(\varphi(z))|^q e^{-\frac{q|z|^2}{2}}dA(z)\right)^{\frac{1}{q}}\\ \nonumber
=& \left( \left( \dfrac{q}{2\pi} \right)^s \int_{\CC^s}  |f(\varphi(z))|^q e^{-\frac{q\left|z_{[s]}\right|^2}{2}} \|\psi(z_{[s]},\cdot)\|^q_{n-s,q} dA(z_{[s]})\right)^{\frac{1}{q}}.
\end{align}
In view of this, we can characterize the boundedness and compactness for $W_{\psi,\varphi}$ induced by $(\psi,\varphi)$ in $\mathcal V_{q,s}$ in terms of the following quantities:
\begin{align*}
\ell_{z_{[s]}}(\psi,\varphi) = e^{\frac{\left|\varphi(z)\right|^2 - \left|z_{[s]}\right|^2}{2}} \|\psi(z_{[s]},\cdot)\|_{n-s, q}, \ \ z_{[s]} \in \CC^s,
\end{align*}
and
$$
\ell(\psi,\varphi) = \sup_{z_{[s]} \in \CC^s} \ell_{z_{[s]}}(\psi, \varphi),
$$
where we consider $\|\psi(z_{[s]},\cdot)\|_{n-s, q} = |\psi(z)|$ if $s = n$ (i.e. $A$ is invertible), and in this case $\ell_z(\psi,\varphi) = m_z(\psi,\varphi)$.

For weighted composition operators $W_{\psi,\varphi}$ induced by $(\psi,\varphi)$ in $\mathcal W_{q}$ with $\varphi(z) = Az + b$ and $\textnormal{rank} A = s$, we may reduce the study to that of some
$W_{\widetilde{\psi},\widetilde{\varphi}}$ induced by $(\widetilde{\psi},\widetilde{\varphi})$ in $\mathcal V_{q,s}$ by the following scheme.

Suppose that the singular value decomposition of $A$ is $V \widetilde{A} U$ and define a new pair $(\widetilde{\psi}, \widetilde{\varphi})$ as follows:
$$
\widetilde{\psi}(z) = \psi(U^*z) \text{ and } \widetilde{\varphi}(z) = \widetilde{A} z + \widetilde{b},\ z \in \CC^n, \text{ where } \widetilde{b} = V^*b.
$$
We call $(\widetilde{\psi}, \widetilde{\varphi})$ a \textit{normalization} of $(\psi, \varphi)$ with respect to the singular value decomposition $A = V \widetilde{A} U$ (briefly, \textit{normalization} of $(\psi, \varphi)$).

It is easy to see that $(\widetilde{\psi}, \widetilde{\varphi}) \in \mathcal V_{q,s}$. Moreover, we have the following result.

\begin{prop} \label{prop-equiv}
Let $p, q \in (0, \infty)$ and $(\psi, \varphi)$ be a pair in $\mathcal W_q$ and $(\widetilde{\psi}, \widetilde{\varphi})$ its normalization. Then the operator $W_{\psi,\varphi}: \calF^p(\CC^n) \to \calF^q(\CC^n)$ is bounded (respectively, compact) if and only if $W_{\widetilde{\psi},\widetilde{\varphi}}: \calF^p(\CC^n) \to \calF^q(\CC^n)$ is bounded (respectively, compact). Moreover, they have the same norm.
\end{prop}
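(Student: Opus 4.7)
The plan is to realize $W_{\psi,\varphi}$ as a conjugate of $W_{\widetilde{\psi},\widetilde{\varphi}}$ by isometric composition operators induced by the unitary factors $U$ and $V$. For any unitary $n \times n$ matrix $P$, set $C_P(f)(z) := f(Pz)$. The first step is to observe that $C_P$ is an isometric isomorphism of $\calF^p(\CC^n)$ for every $p \in (0,\infty]$: the change of variables $w = Pz$ has unit real Jacobian (since $P$ is unitary) and preserves the Euclidean norm, so $\|C_P f\|_{n,p} = \|f\|_{n,p}$, with inverse $C_{P^*}$.

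The key step is to establish the factorization
\[
W_{\psi,\varphi} = C_U \circ W_{\widetilde{\psi},\widetilde{\varphi}} \circ C_V.
\]
This is a direct computation from the definitions: for $f \in \calO(\CC^n)$ and $z \in \CC^n$,
\begin{align*}
\bigl(C_U \circ W_{\widetilde{\psi},\widetilde{\varphi}} \circ C_V\bigr) f(z) &= \widetilde{\psi}(Uz)\, (C_V f)\bigl(\widetilde{A} Uz + \widetilde{b}\bigr) \\
&= \psi(U^* U z)\, f\bigl(V \widetilde{A} U z + V \widetilde{b}\bigr) = \psi(z)\, f(Az + b),
\end{align*}
using $U^* U = I$ and $V \widetilde{b} = V V^* b = b$.

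From this identity the statement follows immediately. Since $C_V$ is an isometric automorphism of $\calF^p(\CC^n)$ and $C_U$ is an isometric automorphism of $\calF^q(\CC^n)$, the map $f \mapsto f \circ V$ is a norm-preserving bijection of the unit ball of $\calF^p(\CC^n)$, and
\[
\|W_{\psi,\varphi} f\|_{n,q} = \|W_{\widetilde{\psi},\widetilde{\varphi}}(f \circ V)\|_{n,q}.
\]
Taking the supremum over $\|f\|_{n,p} \leq 1$ yields equality of the operator norms, and in particular equivalence of boundedness. Equivalence of compactness is likewise immediate, because $W_{\widetilde{\psi},\widetilde{\varphi}} = C_{U^*} \circ W_{\psi,\varphi} \circ C_{V^*}$, and compactness is preserved by composition with bounded operators on either side.

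There is no serious obstacle here: the argument is essentially bookkeeping around the singular value decomposition. The only point requiring a moment of care is the verification that composition with a unitary matrix acts isometrically on $\calF^p(\CC^n)$ for every $p \in (0,\infty]$, which reduces to the standard change of variables formula for the Lebesgue measure.
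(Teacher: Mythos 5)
Your proposal is correct and follows essentially the same route as the paper: both establish the factorization $W_{\psi,\varphi} = C_U \, W_{\widetilde{\psi},\widetilde{\varphi}} \, C_V$ with the unitary factors from the singular value decomposition and then use that $C_U$, $C_V$ are isometric automorphisms of the Fock spaces to transfer norm, boundedness, and compactness. The only difference is that you spell out the final deduction (norm equality via the bijection of the unit ball, compactness via two-sided composition with bounded operators) which the paper leaves implicit.
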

\begin{proof}
For an $n \times n$ matrix $U$, we put
$$
C_U(f) = f \circ U,\ f \in \calO(\CC^n).
$$
Obviously, if $U$ is a unitary matrix, then $C_U$ is invertible on every Fock space $\calF^p(\CC^n)$ with $(C_U)^{-1} = C_{U^*}$ and $\|C_Uf\|_{n, p} = \|f\|_{n, p}$ for all $f \in \calF^p(\CC^n)$.

For every $f \in \calO(\CC^n)$ and $z \in \CC^n$, we have
\begin{align*}
C_U W_{\widetilde{\psi},\widetilde{\varphi}} C_V(f)(z) &= C_U W_{\widetilde{\psi},\widetilde{\varphi}} (f\circ V)(z) = C_U \left( \widetilde{\psi}\cdot(f\circ V \circ \widetilde{\varphi}) \right)(z) \\
 & = \left( \widetilde{\psi} \circ U (z) \right) (f\circ V \circ \widetilde{\varphi} \circ U) (z) \\
&= \psi(z) f(V \widetilde{A} U z + V\widetilde{b}) = \psi(z) f(Az+b)\\
& = W_{\psi,\varphi}(f)(z),
\end{align*}
which means that $W_{\psi,\varphi} = C_U W_{\widetilde{\psi},\widetilde{\varphi}} C_V$, and hence, $W_{\widetilde{\psi},\widetilde{\varphi}} = C_{U^*} W_{\psi,\varphi} C_{V^*}$ on $\calO(\CC^n)$.

From these equalities the assertions follow.
\end{proof}

In view of Proposition \ref{prop-equiv}, we can have criteria for the boundedness and compactness of the weighted composition operator $W_{\psi, \varphi}$ induced by a pair $(\psi, \varphi) \in \calW_q$ in terms of its normalization $(\widetilde{\psi}, \widetilde{\varphi})$, more precisely, in terms of $\ell_{z_{[s]}}(\widetilde{\psi}, \widetilde{\varphi})$. Before doing this, we state some properties of $(\widetilde{\psi}, \widetilde{\varphi})$ and $\ell_{z_{[s]}}(\widetilde{\psi}, \widetilde{\varphi})$.

\begin{lem}\label{lem-as}
For each pair $(\psi, \varphi) \in \calW_q$ with $\varphi(z) = Az + b$ and $\textnormal{rank} A = s$, the following properties are true.
\begin{itemize}
\item[(a)] $m_{z}(\psi, \varphi) = m_{Uz}(\widetilde{\psi}, \widetilde{\varphi})$ for all $z \in \CC^n$, and $m(\psi, \varphi) = m(\widetilde{\psi}, \widetilde{\varphi})$.
In particular, if $A$ is invertible, then $m_{z}(\psi, \varphi) = \ell_{Uz}(\widetilde{\psi}, \widetilde{\varphi})$ for all $z \in \CC^n$, and $m(\psi, \varphi) = \ell(\widetilde{\psi}, \widetilde{\varphi})$.
\item[(b)] $m(\widetilde{\psi}, \widetilde{\varphi}) \leq \ell(\widetilde{\psi}, \widetilde{\varphi})$.
\end{itemize}
\end{lem}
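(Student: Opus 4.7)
Both parts reduce to direct computations, so the only real question is how to package them cleanly.

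For part (a), the plan is to exploit unitarity. I would compute $m_{Uz}(\widetilde\psi,\widetilde\varphi)$ from the definition and check two things: first, $\widetilde\psi(Uz) = \psi(U^*Uz) = \psi(z)$ because $U$ is unitary; second, $\widetilde\varphi(Uz) = \widetilde AUz + V^*b$, and applying the (norm-preserving) unitary $V$ gives $V(\widetilde AUz + V^*b) = V\widetilde AUz + b = Az+b = \varphi(z)$, so $|\widetilde\varphi(Uz)| = |\varphi(z)|$. Combined with $|Uz| = |z|$, this yields $m_z(\psi,\varphi) = m_{Uz}(\widetilde\psi,\widetilde\varphi)$ at once. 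Taking the supremum over $z \in \CC^n$ and using that $U: \CC^n \to \CC^n$ is a bijection then gives $m(\psi,\varphi) = m(\widetilde\psi,\widetilde\varphi)$. The invertible case is immediate from the convention $\ell_z(\widetilde\psi,\widetilde\varphi) = m_z(\widetilde\psi,\widetilde\varphi)$ when $s=n$.

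For part (b), the key move is to control $|\widetilde\psi(z)|$ by $\|\widetilde\psi(z_{[s]},\cdot)\|_{n-s,q}$ via the pointwise bound already proved. Specifically, since $\widetilde\psi \in \calF^q(\CC^n)$, Lemma \ref{lem-F}(i) says $\widetilde\psi(z_{[s]},\cdot) \in \calF^q(\CC^{n-s})$, and then Lemma \ref{lem-F1} (applied on $\CC^{n-s}$) gives
\begin{equation*}
|\widetilde\psi(z)|\,e^{-|z'_{[s]}|^2/2} \le \|\widetilde\psi(z_{[s]},\cdot)\|_{n-s,q}.
\end{equation*}
Multiplying both sides by $e^{(|\widetilde\varphi(z)|^2 - |z|^2)/2}$ and using the identity $|z|^2 - |z'_{[s]}|^2 = |z_{[s]}|^2$ converts the exponent on the right into $(|\widetilde\varphi(z)|^2 - |z_{[s]}|^2)/2$, producing exactly $\ell_{z_{[s]}}(\widetilde\psi,\widetilde\varphi)$. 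Thus $m_z(\widetilde\psi,\widetilde\varphi) \le \ell_{z_{[s]}}(\widetilde\psi,\widetilde\varphi)$ pointwise, and taking the supremum yields $m(\widetilde\psi,\widetilde\varphi) \le \ell(\widetilde\psi,\widetilde\varphi)$.

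Neither step is really an obstacle; the only thing worth being careful about is bookkeeping the splitting $z = (z_{[s]}, z'_{[s]})$ and not confusing $\|\widetilde\psi\|_{n,q}$ with the slice norm $\|\widetilde\psi(z_{[s]},\cdot)\|_{n-s,q}$. The mild subtlety in part (a) is that one must invoke the unitarity of both $U$ (to rewrite $\widetilde\psi$ and to handle $|Uz|$) and $V$ (to equate $|\widetilde\varphi(Uz)|$ with $|\varphi(z)|$); beyond that, the argument is mechanical.
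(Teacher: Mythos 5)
Your proposal is correct and follows essentially the same route as the paper: part (a) is the same unitary computation (the paper writes $\widetilde{A}Uz+\widetilde{b}=V^*(Az+b)$ where you apply $V$, which is identical), and part (b) is the same pointwise estimate $|\widetilde{\psi}(z)|e^{-|z'_{[s]}|^2/2}\le\|\widetilde{\psi}(z_{[s]},\cdot)\|_{n-s,q}$ obtained from Lemmas \ref{lem-F} and \ref{lem-F1}. No gaps.
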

\begin{proof}
(a) For each $z \in \CC^n$, by the definition of $(\widetilde{\psi}, \widetilde{\varphi})$,
\begin{align*}
m_{Uz}(\widetilde{\psi}, \widetilde{\varphi}) &= \left|\widetilde{\psi}(Uz)\right| e^{\frac{\left|\widetilde{\varphi}(Uz)\right|^2 - \left|Uz\right|^2}{2}} = |\psi(z)| e^{\frac{\left|\widetilde{A}Uz + \widetilde{b}\right|^2 - |z|^2}{2}} \\
& = |\psi(z)| e^{\frac{\left|V^*Az + V^*b\right|^2 - |z|^2}{2}} = |\psi(z)| e^{\frac{|Az + b|^2 - |z|^2}{2}} = m_z(\psi,\varphi).
\end{align*}
From this it follows that $m_{z}(\psi, \varphi) = \ell_{Uz}(\widetilde{\psi}, \widetilde{\varphi})$ for all $z \in \CC^n$ whenever $A$ is invertible.

(b) For each pair $(\psi, \varphi) \in \mathcal V_{q,s}$, by the definition of $\ell_{z_{[s]}}(\psi,\varphi)$ and Lemma \ref{lem-F1}, for each $z \in \CC^n$ we have
\begin{align*}
\ell_{z_{[s]}}(\psi,\varphi) & = e^{\frac{|\varphi(z)|^2 - \left|z_{[s]}\right|^2}{2}} \|\psi(z_{[s]},\cdot)\|_{n-s, q} \\
& \geq e^{\frac{|\varphi(z)|^2 - \left|z_{[s]}\right|^2}{2}} |\psi(z)| e^{-\frac{\left|z'_{[s]}\right|^2}{2}} = m_z(\psi,\varphi).
\end{align*}
It implies that $m(\psi, \varphi) \leq \ell(\psi,\varphi)$ for all $(\psi, \varphi) \in \mathcal V_{q,s}$.

From this it follows that $m(\widetilde{\psi}, \widetilde{\varphi}) \leq \ell(\widetilde{\psi}, \widetilde{\varphi})$ for all $(\psi, \varphi) \in \mathcal W_{q}$.
\end{proof}

Clearly, for each pair $(\psi,\varphi) \in \calW_q$, its normalization $(\widetilde{\psi}, \widetilde{\varphi})$ is not unique and depends on the unitary factors $V$ and $U$ in the singular value decomposition $A = V \widetilde{A} U$. But the quantity $\ell_{z_{[s]}}(\widetilde{\psi}, \widetilde{\varphi})$ is "unique" in the following sense.

\begin{lem} \label{lem-UV}
Let $(\widehat{\psi},\widehat{\varphi})$ be another normalization of $(\psi,\varphi) \in \calW_q$ with respect to the singular value decomposition $A = \widehat{V} \widetilde{A} \widehat{U}$.
Then there is an $s \times s$ unitary matrix $H$ such that $\ell_{z_{[s]}}(\widehat{\psi},\widehat{\varphi}) = \ell_{H z_{[s]}}(\widetilde{\psi},\widetilde{\varphi})$ for all $z_{[s]} \in \CC^s$.
\end{lem}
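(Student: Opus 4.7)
\medskip

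\noindent\textbf{Plan of proof.} The plan is to compare the two singular value decompositions
$A = \widetilde{V}\widetilde{A}\widetilde{U} = \widehat{V}\widetilde{A}\widehat{U}$ and extract from them a unitary
$H$ acting on the first $s$ coordinates. Put $P = \widetilde{V}^{*}\widehat{V}$ and
$Q = \widetilde{U}\widehat{U}^{*}$; these are $n \times n$ unitary and, by equating the two decompositions, they satisfy
\[
P\,\widetilde{A} = \widetilde{A}\,Q.
\]
Write $\widetilde{A} = \bigl(\begin{smallmatrix} D & 0 \\ 0 & 0 \end{smallmatrix}\bigr)$ with $D = \mathrm{diag}(a_{11},\dots,a_{ss})$ strictly positive, and split $P,Q$ into the corresponding $s,(n-s)$ blocks. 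Comparing blocks in $P\widetilde{A} = \widetilde{A}Q$ yields $P_{21}D = 0$ and $DQ_{12} = 0$, hence $P_{21} = 0$ and $Q_{12} = 0$ (since $D$ is invertible). Because $P$ and $Q$ are unitary, this forces $P$ and $Q$ to be block-diagonal, $P = \mathrm{diag}(P_{11},P_{22})$, $Q = \mathrm{diag}(Q_{11},Q_{22})$, with all four blocks unitary, and leaves the relation $P_{11}D = DQ_{11}$. I then define $H := Q_{11}$, an $s\times s$ unitary matrix, which will be the sought matrix.

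\smallskip

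Next I rewrite the two normalizations in terms of $P$ and $Q$. From $\widehat{V} = \widetilde{V}P$ and $\widehat{U} = Q^{*}\widetilde{U}$, the translation vectors satisfy $\widehat{b} := \widehat{V}^{*}b = P^{*}\widetilde{b}$, and the block-diagonality of $P$ gives $\widehat{b}_{[s]} = P_{11}^{*}\widetilde{b}_{[s]}$ and $|\widehat{b}'_{[s]}| = |\widetilde{b}'_{[s]}|$. For the mapping part, the first $s$ coordinates of $\widehat{\varphi}(z)$ are
\[
D z_{[s]} + P_{11}^{*}\widetilde{b}_{[s]} \;=\; P_{11}^{*}\bigl(P_{11}D z_{[s]} + \widetilde{b}_{[s]}\bigr) \;=\; P_{11}^{*}\bigl(D Q_{11} z_{[s]} + \widetilde{b}_{[s]}\bigr),
\]
using $P_{11}D = DQ_{11}$, and the last $n-s$ coordinates have modulus $|\widehat{b}'_{[s]}| = |\widetilde{b}'_{[s]}|$. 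Since $P_{11}$ is unitary it preserves the norm on the first block, so
\[
|\widehat{\varphi}(z)|^{2} = |DH z_{[s]} + \widetilde{b}_{[s]}|^{2} + |\widetilde{b}'_{[s]}|^{2} = |\widetilde{\varphi}(H z_{[s]},\cdot)|^{2},
\]
and, using that $|Hz_{[s]}| = |z_{[s]}|$, the exponential factor in $\ell$ matches perfectly.

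\smallskip

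For the $\|\cdot\|_{n-s,q}$-factor, I use the identity $\widehat{\psi}(z) = \psi(\widehat{U}^{*}z) = \psi(\widetilde{U}^{*}Qz) = \widetilde{\psi}(Qz)$ together with the block-diagonality of $Q$, giving $\widehat{\psi}(z_{[s]},w) = \widetilde{\psi}(H z_{[s]},\,Q_{22}w)$. A change of variable $u = Q_{22}w$ in the integral defining $\|\widehat{\psi}(z_{[s]},\cdot)\|_{n-s,q}^{q}$ preserves both $|w|^{2}$ and the Lebesgue measure, so
\[
\|\widehat{\psi}(z_{[s]},\cdot)\|_{n-s,q} = \|\widetilde{\psi}(H z_{[s]},\cdot)\|_{n-s,q}.
\]
Multiplying the two matching factors yields $\ell_{z_{[s]}}(\widehat{\psi},\widehat{\varphi}) = \ell_{Hz_{[s]}}(\widetilde{\psi},\widetilde{\varphi})$, as required.

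\smallskip

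\noindent\textbf{Main obstacle.} The only delicate step is the block-diagonality: deducing from $P\widetilde{A} = \widetilde{A}Q$ with $P,Q$ unitary and $\widetilde{A}$ of the stated shape that $P,Q$ split as $\mathrm{diag}(P_{11},P_{22})$ and $\mathrm{diag}(Q_{11},Q_{22})$. Everything after that is bookkeeping: rewriting $\widehat{b}$ and $\widehat{\varphi}$ via $P^{*}\widetilde{b}$ and $P_{11}D = DQ_{11}$, and applying a unitary change of variables in the integral defining the weighted norm.
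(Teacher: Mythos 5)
Your proof is correct, but it takes a genuinely different route from the paper's. The paper invokes the uniqueness theorem for the singular value decomposition (\cite[Theorem 2.6.5]{HR-90}), which says that the two unitary factors are related by $\widehat{V} = V(H_1 \oplus \cdots \oplus H_d \oplus V_0)$ and $\widehat{U} = (H_1^* \oplus \cdots \oplus H_d^* \oplus U_0^*)U$ with the \emph{same} blocks $H_i$ appearing in both (one $H_i$ per distinct positive singular value), so that $H = H_1 \oplus \cdots \oplus H_d$ automatically commutes with $D = \widetilde{A}_{[s]}$; the rest of the paper's proof is then the same bookkeeping you do. You instead derive everything you need directly from the intertwining relation $P\widetilde{A} = \widetilde{A}Q$ with $P = V^*\widehat{V}$, $Q = U\widehat{U}^*$: block-triangularity plus unitarity forces both matrices to be block-diagonal, and the surviving relation $P_{11}D = DQ_{11}$ is exactly enough to transport the affine part, while block-diagonality of $Q$ handles the $\|\cdot\|_{n-s,q}$-factor via a measure-preserving change of variables. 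Your version is more elementary and self-contained (no appeal to the SVD uniqueness theorem) and in fact slightly more flexible, since it never needs $P_{11} = Q_{11}$ or commutation with $D$ --- only the intertwining identity; the price is that you must carry the extra unitary $P_{11}$ through the computation of $|\widehat{\varphi}(z)|$, which you do correctly. All the individual steps (the deduction $P_{21}=0$, $Q_{12}=0$ from invertibility of $D$, the upgrade to block-diagonality from unitarity, $|\widehat{b}'_{[s]}| = |\widetilde{b}'_{[s]}|$, and the identity $\widehat{\psi}(z_{[s]},w) = \widetilde{\psi}(Hz_{[s]}, Q_{22}w)$) check out, so the argument is complete.
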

\begin{proof}
By the definition of normalization, we have
$$
\widehat{\psi}(z) = \psi(\widehat{U}^*z), \widehat{\varphi}(z) = \widetilde{A}z + \widehat{b} \text{ with }\widehat{b} = \widehat{V}^*b.
$$
By \cite[Theorem 2.6.5]{HR-90}, there are $(n-s) \times (n-s)$ unitary matrices $V_0, U_0$ and $n_1 \times n_1$ unitary matrix $H_1$,..., $n_s \times n_s$ unitary matrix $H_d$ such that
$$
\widehat{V} = V(H_1 \oplus ...\oplus H_d \oplus V_0) \text{ and } \widehat{U} = (H_1^* \oplus ...\oplus H_d^* \oplus U_0^*)U,
$$
where $n_i$ with $i = 1, 2,...,d,$ is the multiplicity of the distinct positive singular value $\sigma_i$ of $A$ and $\sigma_1 > \sigma_2 > ... > \sigma_d$. In this case, $n_1 + ... + n_d = \textnormal{rank}A = s$.

Putting $H = H_1 \oplus ...\oplus H_d$, we get that $H$ is an $s \times s$ unitary matrix and for every $z \in \CC^n$,
\begin{align*}
\|\widehat{\psi}(z_{[s]},\cdot)\|_{n-s,q} &= \left( \left( \dfrac{q}{2\pi}\right)^{n-s} \int_{\CC^{n-s}} |\psi(\widehat{U}^*z)|^q e^{-\frac{q\left|z'_{[s]}\right|^2}{2}} dA(z'_{[s]})\right)^{\frac{1}{q}} \\
& = \left( \left( \dfrac{q}{2\pi}\right)^{n-s} \int_{\CC^{n-s}} |\widetilde{\psi}(Hz_{[s]}, U_0 z'_{[s]})|^q e^{-\frac{q\left|U_0 z'_{[s]}\right|^2}{2}} dA(z'_{[s]})\right)^{\frac{1}{q}} \\
& = \left( \left( \dfrac{q}{2\pi}\right)^{n-s} \int_{\CC^{n-s}} |\widetilde{\psi}(Hz_{[s]}, z'_{[s]})|^q e^{-\frac{q\left|z'_{[s]}\right|^2}{2}} \right)^{\frac{1}{q}} \\
& = \|\widetilde{\psi}(H z_{[s]},\cdot)\|_{n-s,q}, \text{ since } U_0 \text{ is unitary}.
\end{align*}
Moreover, since $V_0, H$ are unitary and $\widetilde{a}_{s+1,s+1} = ... = \widetilde{a}_{n,n} = 0$, for each $z \in \CC^n$ we have
\begin{align*}
|\widehat{\varphi}(z)|^2 - |z_{[s]}|^2 & = \left| \widehat{V}^* A \widehat{U}^* z + \widehat{V}^*b\right|^2 - |z_{[s]}|^2 \\
& = \left| (H^* \oplus V_0^*)V^* A U^* (H \oplus U_0) z + (H^* \oplus V_0^*)V^*b\right|^2 - |z_{[s]}|^2 \\
& = \left| \widetilde{A} (H \oplus U_0) z + \widetilde{b}\right|^2 - |z_{[s]}|^2  = \left| \widetilde{A} (Hz_{[s]}, U_0z'_{[s]}) + \widetilde{b}\right|^2 - |z_{[s]}|^2 \\
& = \left| \widetilde{A} (Hz_{[s]}, 0'_{[s]}) + \widetilde{b}\right|^2 - |z_{[s]}|^2  = \left|\widetilde{\varphi}(Hz_{[s]},0'_{[s]})\right|^2 - \left|H z_{[s]}\right|^2.
\end{align*}
Consequently, $\ell_{ z_{[s]}}(\widehat{\psi},\widehat{\varphi}) = \ell_{Hz_{[s]}}(\widetilde{\psi},\widetilde{\varphi})$ for all $z \in \CC^n$.
\end{proof}

\begin{rem}
Lemma \ref{lem-UV} guarantees that the criteria for the boundedness and compactness in Theorems \ref{thm-bounded}, \ref{thm-compact}, \ref{thm-bd} and the estimates for essential norm in Theorem \ref{thm-essnorm} of $W_{\psi,\varphi}$ induced by a pair $(\psi,\varphi) \in \calW_q$ in terms of $\ell_{z_{[s]}}(\widetilde{\psi},\widetilde{\varphi})$ are the same for every normalization $(\widetilde{\psi},\widetilde{\varphi})$ (in details, see Remark \ref{rem-incl}).

Moreover, for a pair $(\psi,\varphi) \in \mathcal V_{q,s}$ without loss of generality we can assume that $(\widetilde{\psi},\widetilde{\varphi}) = (\psi,\varphi)$.
\end{rem}

We separate two cases $p \leq q$ and $q < p$, which give different criteria. For an $n \times n$ diagonal matrix $A$ and $0 < s < n$ let us denote by $A_{[s]}$ the submatrix of $A$ with the diagonal entries $a_{ii}, i = 1,...,s$.

\subsection{The case $0 < p \leq q < \infty$}

Firstly we have the following criterion for the boundedness.

\begin{thm}\label{thm-bounded}
Let $0 < p \leq q < \infty$ and $(\psi, \varphi)$ be a pair in $\calW_q$ with $\varphi(z) = Az + b$ and $\text{rank} A = s$. Then the operator $W_{\psi,\varphi}: \calF^p(\CC^n) \to \calF^q(\CC^n)$ is bounded if and only if $\ell(\widetilde{\psi}, \widetilde{\varphi}) < \infty$, where $(\widetilde{\psi}, \widetilde{\varphi})$ is the normalization of $(\psi, \varphi)$ with respect to the singular value decomposition $A = V\widetilde{A}U$.
Moreover,
$$
\ell(\widetilde{\psi}, \widetilde{\varphi}) \leq \|W_{\psi, \varphi}\| \leq |\textnormal{det}\widetilde{A}_{[s]}|^{-\frac{2}{q}} \left( \dfrac{q}{p} \right)^{\frac{n}{q}} \ell(\widetilde{\psi}, \widetilde{\varphi}).
$$
\end{thm}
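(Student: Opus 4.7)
The plan is to invoke Proposition \ref{prop-equiv} to reduce at once to the normalized pair $(\widetilde{\psi},\widetilde{\varphi}) \in \mathcal V_{q,s}$, since both boundedness and operator norm are preserved under the normalization. I may then assume $(\psi,\varphi) = (\widetilde{\psi},\widetilde{\varphi})$, so that $\varphi(z) = (a_{11}z_1 + b_1,\ldots,a_{ss}z_s + b_s,b_{s+1},\ldots,b_n)$ with the prescribed diagonal structure, and what remains is to establish the two estimates
$$
\ell(\psi,\varphi) \le \|W_{\psi,\varphi}\| \le |\det \widetilde{A}_{[s]}|^{-2/q}\,(q/p)^{n/q}\,\ell(\psi,\varphi).
$$

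For the lower bound, the key device is to test on the normalized reproducing kernels $k_w$, which have norm $1$ in every $\calF^p(\CC^n)$, and then slice via Lemma \ref{lem-F}(i). Writing $w = (w_{[s]},w'_{[s]})$ and exploiting the diagonal form of $\varphi$, the function $z'_{[s]} \mapsto (W_{\psi,\varphi}k_w)(z_{[s]}, z'_{[s]})$ is $\psi(z_{[s]},\cdot)$ multiplied by a scalar that depends only on $z_{[s]}$ and $w$. Applying Lemma \ref{lem-F}(i) then yields
$$
\|\psi(z_{[s]},\cdot)\|_{n-s,q}\,\exp\!\Bigl(\re\bigl\langle \widetilde{A}_{[s]}z_{[s]}+b_{[s]},w_{[s]}\bigr\rangle + \re\langle b'_{[s]},w'_{[s]}\rangle - \tfrac{|w|^2}{2} - \tfrac{|z_{[s]}|^2}{2}\Bigr) \le \|W_{\psi,\varphi}\|.
$$
The crucial point is to optimize over $w_{[s]}$ \emph{and} $w'_{[s]}$ simultaneously: choosing $w_{[s]} = \widetilde{A}_{[s]}z_{[s]} + b_{[s]}$ and $w'_{[s]} = b'_{[s]}$ completes both squares and converts the exponent into $(|\varphi(z)|^2 - |z_{[s]}|^2)/2$, so the left side becomes exactly $\ell_{z_{[s]}}(\psi,\varphi)$. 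Taking the supremum in $z_{[s]}$ gives $\ell(\psi,\varphi) \le \|W_{\psi,\varphi}\|$.

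For the upper bound, I start from formula \eqref{eq-normf} and replace $\|\psi(z_{[s]},\cdot)\|_{n-s,q}$ by $\ell(\psi,\varphi)\,e^{-(|\varphi(z)|^2 - |z_{[s]}|^2)/2}$. The exponentials combine to leave $e^{-q|\varphi(z)|^2/2}$ as the weight, and the change of variables $w_{[s]} = \widetilde{A}_{[s]}z_{[s]} + b_{[s]}$ (Jacobian $|\det\widetilde{A}_{[s]}|^{-2}$), together with $|\varphi(z)|^2 = |w_{[s]}|^2 + |b'_{[s]}|^2$, transforms the integral into $\|g\|^q_{s,q}$ for the slice $g(w_{[s]}) := f(w_{[s]},b'_{[s]})$, multiplied by the explicit factor $|\det\widetilde{A}_{[s]}|^{-2}e^{-q|b'_{[s]}|^2/2}$. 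To finish, Lemma \ref{lem-Fpq} gives $\|g\|_{s,q} \le (q/p)^{s/q}\|g\|_{s,p}$ (this is where $p \le q$ is used), and Lemma \ref{lem-F}(ii) gives $\|g\|_{s,p} \le e^{|b'_{[s]}|^2/2}\|f\|_{n,p}$; the factor $e^{q|b'_{[s]}|^2/2}$ produced by these two steps cancels precisely the $e^{-q|b'_{[s]}|^2/2}$ from the change of variables, leaving $\|W_{\psi,\varphi}f\|_{n,q} \le |\det\widetilde{A}_{[s]}|^{-2/q}(q/p)^{s/q}\ell(\psi,\varphi)\|f\|_{n,p}$, which is even sharper than the claimed bound since $s\le n$ and $q/p\ge 1$. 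The invertible case $s=n$ is the same computation with the convention $\|\psi(z,\cdot)\|_{0,q} = |\psi(z)|$ and no recourse to Lemma \ref{lem-F}(ii).

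The main obstacle is really the lower bound: one must notice that optimizing only in $w_{[s]}$ (the natural first attempt) falls short of the sharp constant by a factor $e^{|b'_{[s]}|^2/2}$, because the mass of $\psi$ in the degenerate coordinates is detected only when one also optimizes in $w'_{[s]}$ to absorb the $\re\langle b'_{[s]}, w'_{[s]}\rangle$ term. The upper bound, by contrast, is careful but routine bookkeeping of exponentials through a change of variables.
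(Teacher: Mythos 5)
Your proposal is correct and follows essentially the same route as the paper: reduce to $\mathcal V_{q,s}$ via Proposition \ref{prop-equiv}, get the lower bound by testing on $k_{\varphi(z)}$ (your simultaneous choice $w_{[s]}=\widetilde A_{[s]}z_{[s]}+b_{[s]}$, $w'_{[s]}=b'_{[s]}$ is exactly the paper's $w=\varphi(z)$) and slicing with Lemma \ref{lem-F}, and get the upper bound from \eqref{eq-normf} plus the change of variables with Jacobian $|\det\widetilde A_{[s]}|^{-2}$. The only (harmless) deviation is the order of the final two estimates in the upper bound — you apply Lemma \ref{lem-Fpq} on $\CC^s$ to the slice before Lemma \ref{lem-F}(ii), yielding the marginally sharper constant $(q/p)^{s/q}$, whereas the paper applies Lemma \ref{lem-F}(ii) at exponent $q$ and then Lemma \ref{lem-Fpq} on $\CC^n$ to get $(q/p)^{n/q}$.
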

\begin{proof}
Note that it suffices to prove the theorem for the case when $(\psi, \varphi) \in \mathcal V_{q,s}$. Indeed, if so, then applying the result to $(\widetilde{\psi}, \widetilde{\varphi})$ and using Proposition \ref{prop-equiv}, we can get the assertion.

We prove the theorem for the operators $W_{\psi, \varphi}$ induced by $(\psi, \varphi) \in \mathcal V_{q,s}$.
In this case, $(\widetilde{\psi}, \widetilde{\varphi})= (\psi, \varphi)$.

\textbf{Necessity.} Suppose that $W_{\psi, \varphi}$ is bounded from $\calF^p(\CC^n)$ into $\calF^q(\CC^n)$. By Lemma \ref{lem-F} and the fact that $\|k_w\|_{n,p} = 1$ for every $w \in \CC^n$, we have
\begin{align*}
\|W_{\psi, \varphi}\| &\geq \|W_{\psi, \varphi}k_w\|_{n, q} = \|\psi(z)e^{\langle Az + b, w  \rangle - \frac{|w|^2}{2}}\|_{n, q}\\
& \geq e^{-\frac{\left|z_{[s]}\right|^2}{2}} \|\psi(z_{[s]},\cdot) e^{\langle Az + b, w  \rangle - \frac{|w|^2}{2}}\|_{n-s,q} \\
& = \left| e^{\langle Az + b, w  \rangle - \frac{|w|^2+\left|z_{[s]}\right|^2}{2}}\right| \|\psi(z_{[s]},\cdot) \|_{n-s,q}, \text{ for all } w, z \in \CC^n.
\end{align*}
In particular, with $w = Az + b$, the last inequality gives
\begin{equation}\label{eq-est}
\|W_{\psi, \varphi}\| \geq \|W_{\psi, \varphi}k_{Az + b}\|_{n, q} \geq  e^{ \frac{|A z + b|^2 - \left|z_{[s]}\right|^2}{2}} \|\psi(z_{[s]},\cdot)\|_{n-s, q} = \ell_{z_{[s]}}(\psi, \varphi),
\end{equation}
for all $z_{[s]} \in \CC^{s}$, and so,
$\ell(\psi, \varphi) \leq \|W_{\psi, \varphi}\| < \infty$.

\textbf{Sufficiency.} Suppose that $\ell(\psi,\varphi) < \infty$. Then, for each $f \in \calF^p(\CC^n)$, by \eqref{eq-normf} and Lemmas \ref{lem-F} and \ref{lem-Fpq}, we have
\begin{align*}
&\|W_{\psi,\varphi}f\|^q_{n, q}\\
 =&  \left( \dfrac{q}{2\pi} \right)^s\int_{\CC^s}  |f(Az+b)|^q e^{-\frac{q\left|z_{[s]}\right|^2}{2}} \| \psi(z_{[s]},\cdot) \|^q_{n-s, q}dA(z_{[s]})\\
\leq & \left( \dfrac{q}{2\pi} \right)^s \ell^q(\psi,\varphi) \int_{\CC^s}  |f(Az+b)|^q e^{-\frac{q|Az+b|^2}{2}}dA(z_{[s]}) \\
 = & \left( \dfrac{q}{2\pi} \right)^s \ell^q(\psi,\varphi) \int_{\CC^s} \left|f(A_{[s]}z_{[s]}+b_{[s]},b'_{[s]})\right|^q e^{-\frac{q\left|\left(A_{[s]} z_{[s]}+b_{[s]},b'_{[s]}\right)\right|^2}{2}}dA(z_{[s]}) \\
 \leq & \left( \dfrac{q}{2\pi} \right)^s \ell^q(\psi,\varphi) |\textnormal{det}A_{[s]}|^{-2} \int_{\CC^s}  \left|f(\zeta_{[s]},b'_{[s]})\right|^q e^{-\frac{q\left|\left(\zeta_{[s]},b'_{[s]}\right)\right|^2}{2}}dA(\zeta_{[s]}) \\
\end{align*}
\begin{align*}
 = &\; \ell^q(\psi,\varphi) |\textnormal{det}A_{[s]}|^{-2} \|f(\cdot,b'_{[s]})\|^q_{s, q} e^{-\frac{q\left|b'_{[s]}\right|^2}{2}}
 \leq \ell^q(\psi,\varphi) |\textnormal{det}A_{[s]}|^{-2} \|f\|^q_{n, q} \\
 \leq & \; \ell^q(\psi,\varphi) |\textnormal{det}A_{[s]}|^{-2} \left( \dfrac{q}{p} \right)^n \|f\|^q_{n, p}.
\end{align*}
Hence, $W_{\psi,\varphi}: \calF^p(\CC^n) \to \calF^q(\CC^n) $ is bounded and
$$
\|W_{\psi, \varphi}\| \leq |\textnormal{det}A_{[s]}|^{-\frac{2}{q}} \left( \dfrac{q}{p} \right)^{\frac{n}{q}} \ell(\psi,\varphi).
$$
The assertion is proved for $(\psi,\varphi) \in \mathcal V_{q,s}$.

\end{proof}

Next we have the following criterion for the compactness of weighted composition operators $W_{\psi, \varphi}$.

\begin{thm}\label{thm-compact}
Let $ 0 < p \leq q < \infty$ and $(\psi, \varphi)$ be a pair in $\calW_q$ with $\varphi(z) = Az + b$ and $\text{rank}A = s$.
Then the operator $W_{\psi,\varphi}: \calF^p(\CC^n) \to \calF^q(\CC^n)$ is compact if and only if
$$
\lim_{z_{[s]} \to \infty} \ell_{z_{[s]}}(\widetilde{\psi},\widetilde{\varphi}) = 0,
$$
where, as above, $(\widetilde{\psi}, \widetilde{\varphi})$ is the normalization of $(\psi, \varphi)$.
\end{thm}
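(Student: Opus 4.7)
By Proposition~\ref{prop-equiv} it suffices to prove the theorem for pairs $(\psi,\varphi) \in \mathcal V_{q,s}$, so that $(\widetilde\psi,\widetilde\varphi)=(\psi,\varphi)$ and $\varphi(z)=Az+b$ with $A$ diagonal of rank $s$; the reduction lets me work directly with $\ell_{z_{[s]}}(\psi,\varphi)$.

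For necessity the plan is to probe compactness with normalized reproducing kernels. Given any sequence $z_{[s]}^{(j)}\in\CC^s$ with $|z_{[s]}^{(j)}|\to\infty$, set $w^{(j)}=\varphi(z_{[s]}^{(j)},0'_{[s]})=(A_{[s]}z_{[s]}^{(j)}+b_{[s]},\,b'_{[s]})$. Invertibility of $A_{[s]}$ forces $|w^{(j)}|\to\infty$, so $k_{w^{(j)}}\to 0$ in $\calO(\CC^n)$ with $\|k_{w^{(j)}}\|_{n,p}=1$. Because $W_{\psi,\varphi}$ is linear and continuous on $\calO(\CC^n)$, Lemma~\ref{lem-com} gives $\|W_{\psi,\varphi}k_{w^{(j)}}\|_{n,q}\to 0$. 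Substituting $z=(z_{[s]}^{(j)},0'_{[s]})$ into inequality~\eqref{eq-est} from the proof of Theorem~\ref{thm-bounded} yields $\ell_{z_{[s]}^{(j)}}(\psi,\varphi)\leq \|W_{\psi,\varphi}k_{w^{(j)}}\|_{n,q}\to 0$, as required.

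For sufficiency, assume $\ell_{z_{[s]}}(\psi,\varphi)\to 0$ as $|z_{[s]}|\to\infty$. Combined with the pointwise bound $\ell_{z_{[s]}}(\psi,\varphi)\leq \|\psi\|_{n,q}\,e^{|\varphi(z)|^2/2}$ (from Lemma~\ref{lem-F}(i)), which is locally bounded, this gives $\ell(\psi,\varphi)<\infty$, so Theorem~\ref{thm-bounded} yields boundedness. To obtain compactness I would apply Lemma~\ref{lem-com}: let $(f_j)$ be bounded in $\calF^p(\CC^n)$ with $f_j\to 0$ in $\calO(\CC^n)$. Starting from~\eqref{eq-normf} and substituting $\|\psi(z_{[s]},\cdot)\|_{n-s,q} = e^{-(|\varphi(z)|^2-|z_{[s]}|^2)/2}\ell_{z_{[s]}}(\psi,\varphi)$, rewrite
\[
\|W_{\psi,\varphi}f_j\|_{n,q}^q = \left(\frac{q}{2\pi}\right)^s\int_{\CC^s} |f_j(\varphi(z))|^q\, e^{-q|\varphi(z)|^2/2}\, \ell_{z_{[s]}}^q(\psi,\varphi)\, dA(z_{[s]}),
\]
and split the integral at $|z_{[s]}|=R$. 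On the tail, bound $\ell_{z_{[s]}}^q$ by $\sup_{|z_{[s]}|>R}\ell_{z_{[s]}}^q$, then perform the change of variables $\zeta_{[s]}=A_{[s]}z_{[s]}+b_{[s]}$ exactly as in the sufficiency half of Theorem~\ref{thm-bounded} and invoke Lemmas~\ref{lem-F}(ii) and~\ref{lem-Fpq} to dominate the remaining integral by $C\|f_j\|_{n,p}^q$. On the bounded region $|z_{[s]}|\leq R$, the weight $e^{-q|\varphi(z)|^2/2}\ell_{z_{[s]}}^q(\psi,\varphi)$ is uniformly bounded, while $f_j\circ\varphi\to 0$ uniformly there because $f_j\to 0$ locally uniformly on $\CC^n$. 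Given $\varepsilon>0$, fix $R$ so the tail contribution is $<\varepsilon/2$ uniformly in $j$, then choose $J$ so the compact contribution is $<\varepsilon/2$ for $j\geq J$; hence $\|W_{\psi,\varphi}f_j\|_{n,q}\to 0$.

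The main obstacle is the sufficiency bookkeeping: the tail estimate must be uniform in $j$, which requires executing the change of variables from Theorem~\ref{thm-bounded} and invoking Lemmas~\ref{lem-F}(ii) and~\ref{lem-Fpq} carefully; without this the splitting argument would not close. Everything else parallels the bounded case and the one-variable treatment in \cite{TK-17}.
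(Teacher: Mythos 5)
Your proposal is correct and follows essentially the same route as the paper: reduction to $\mathcal V_{q,s}$ via Proposition \ref{prop-equiv}, necessity by testing on $k_{\varphi(z)}$ together with \eqref{eq-est} and Lemma \ref{lem-com}, and sufficiency by the $|z_{[s]}|\leq R$ versus $|z_{[s]}|>R$ splitting of \eqref{eq-normf} with the change of variables $\zeta_{[s]}=A_{[s]}z_{[s]}+b_{[s]}$ and Lemmas \ref{lem-F}, \ref{lem-Fpq}. The only cosmetic difference is that you fix $R$ first and then let $j\to\infty$, while the paper takes $j\to\infty$ before $R\to\infty$; these are equivalent here since your tail bound is uniform in $j$.
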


\begin{proof}
As in Theorem \ref{thm-bounded}, it suffices to prove the theorem for the operator $W_{\psi,\varphi}$ induced by $(\psi, \varphi)$ in $\mathcal V_{q, s}$, and then using Proposition \ref{prop-equiv} to complete the proof.

Remind that for $(\psi, \varphi)$ in $\mathcal V_{q, s}$, $(\widetilde{\psi}, \widetilde{\varphi})= (\psi, \varphi)$.

\textbf{Necessity.} Suppose that $W_{\psi,\varphi}$ is compact from $\calF^p(\CC^n)$ into $\calF^q(\CC^n)$. Since $\|k_w\|_{n, p} = 1$ for all $w \in \CC^n$ and $k_w \to 0$ in $\calO(\CC^n)$ as $w \to \infty$, by Lemma \ref{lem-com},
$  \|W_{\psi,\varphi}k_w\|_{n, q} \rightarrow 0$ as $w \to \infty$.

From this, \eqref{eq-est}, and the fact that $\varphi(z) = Az + b \to \infty$ as $z_{[s]} \to \infty$ in $\CC^s$, it follows that
$$
\ell_{z_{[s]}}(\psi, \varphi) \leq \|W_{\psi,\varphi}k_{Az+b}\| \to 0, \text{  as } z_{[s]} \to \infty.
$$

\textbf{Sufficiency.} Suppose that
$$
\lim_{z_{[s]} \to \infty}\ell_{z_{[s]}}(\psi,\varphi) = 0.
$$
Then $\ell(\psi,\varphi) < \infty$, and by Theomrem \ref{thm-bounded}, $W_{\psi, \varphi}$ is bounded from $\calF^p(\CC^n)$ into $\calF^q(\CC^n)$.

Let $(f_j)_j$ be a bounded sequence in $\calF^p(\CC^n)$ converging to $0$ in $\calO(\CC^n)$. By \eqref{eq-normf} and Lemmas \ref{lem-F} and \ref{lem-Fpq}, for every $R>0$ and $j \in \NN$,
\begin{align*}
&\|W_{\psi,\varphi}f_j\|^q_{n, q} = \left( \dfrac{q}{2\pi} \right)^s \int_{\CC^s}  |f_j(Az+b)|^q e^{-\frac{q\left|z_{[s]}\right|^2}{2}} \|\psi(z_{[s]},\cdot)\|^q_{n-s, q}dA(z_{[s]})
\end{align*}
\begin{align*}
=& \left( \dfrac{q}{2\pi} \right)^s \int_{|z_{[s]}|\leq R} |f_j(Az+b)|^q e^{-\frac{q\left|z_{[s]}\right|^2}{2}} \|\psi(z_{[s]},\cdot)\|^q_{n-s, q}dA(z_{[s]}) \\
+ & \left( \dfrac{q}{2\pi} \right)^s \int_{|z_{[s]}| > R} |f_j(Az+b)|^q e^{-\frac{q\left|z_{[s]}\right|^2}{2}} \|\psi(z_{[s]},\cdot)\|^q_{n-s, q}dA(z_{[s]}) \\
\leq &  \left( \dfrac{q}{2\pi} \right)^s \max_{|z_{[s]}|\leq R} |f_j(Az+b)|^q \int_{|z_{[s]}| \leq R} e^{-\frac{q\left|z_{[s]}\right|^2}{2}} \|\psi(z_{[s]},\cdot)\|^q_{n-s, q} dA(z_{[s]})  \\
+ & \left( \dfrac{q}{2\pi} \right)^s \max_{|z_{[s]}| > R} \ell^q_{z_{[s]}}(\psi,\varphi) \int_{|z_{[s]}| > R}|f_j(Az+b)|^q e^{-\frac{q|Az+b|^2}{2}}dA(z_{[s]}) \\
\leq & \; \|\psi\|_{n, q}^q \max_{|z_{[s]}|\leq R} |f_j(Az+b)|^q \\
+ & \left( \dfrac{q}{2\pi} \right)^s |\textnormal{det}A_{[s]}|^{-2} \max_{|z_{[s]}| > R} \ell^q_{z_{[s]}}(\psi,\varphi)  \int_{\CC^s}\left|f_j(\zeta_{[s]},b'_{[s]})\right|^q e^{-\frac{q\left|\left(\zeta_{[s]},b'_{[s]}\right)\right|^2}{2}}dA(\zeta_{[s]}) \\
= & \; \|\psi\|_{n, q}^q \max_{|z_{[s]}|\leq R} |f_j(Az+b)|^q \\
+& \; |\textnormal{det}A_{[s]}|^{-2} \|f_j(\cdot,b'_{[s]})\|^q_{s, q} e^{-\frac{q\left|b'_{[s]}\right|^2}{2}} \max_{|z_{[s]}| > R} \ell^q_{z_{[s]}}(\psi,\varphi)\\
\leq & \; \|\psi\|_{n, q}^q \max_{|z_{[s]}|\leq R} |f_j(Az+b)|^q + |\textnormal{det}A_{[s]}|^{-2} \|f_j\|^q_{n, q} \max_{|z_{[s]}| > R} \ell^q_{z_{[s]}}(\psi,\varphi) \\
 \leq & \; \|\psi\|_{n, q}^q \max_{|z_{[s]}|\leq R} |f_j(Az+b)|^q + |\textnormal{det}A_{[s]}|^{-2} \left(\dfrac{q}{p}\right)^n \|f_j\|^q_{n, p} \max_{|z_{[s]}| > R} \ell^q_{z_{[s]}}(\psi,\varphi) \\
\leq & \; \|\psi\|_{n,q}^q \max_{|z_{[s]}|\leq R} |f_j(Az+b)|^q + |\textnormal{det}A_{[s]}|^{-2} \left(\dfrac{q}{p}\right)^n M^q \max_{|z_{[s]}| > R} \ell^q_{z_{[s]}}(\psi,\varphi),
\end{align*}
where
$$
M = \sup_{ j \geq 1} \|f_j\|_{n, p} < \infty.
$$
In the last inequality, letting $j \to \infty$, and then $R \to \infty$, we get
$$
\lim_{j \to \infty} \|W_{\psi,\varphi}f_j\|_{n, q} =0.
$$
By Lemma \ref{lem-com}, $W_{\psi,\varphi}: \calF^p(\CC^n) \to \calF^q(\CC^n)$ is compact.
\end{proof}

From Theorems \ref{thm-bounded}, \ref{thm-compact} and Lemma \ref{lem-as}, we get immediately the following result for the case when $A$ is invertible.

\begin{cor}\label{cor-iv}
Let $0 < p \leq q < \infty$ and $(\psi,\varphi)$ be a pair in $\calW_q$ with $\varphi(z) = Az + b$ and $A$ is invertible. Then the following statements are true:
\begin{itemize}
\item[(a)] The operator $W_{\psi,\varphi}: \calF^p(\CC^n) \to \calF^q(\CC^n)$ is bounded if and only if $m(\psi,\varphi) < \infty$ and
$$
m(\psi, \varphi) \leq \|W_{\psi, \varphi}\| \leq |\textnormal{det}A|^{-\frac{2}{q}} \left( \dfrac{q}{p} \right)^{\frac{n}{q}} m(\psi,\varphi).
$$

\item[(b)] The operator $W_{\psi,\varphi}: \calF^p(\CC^n) \to \calF^q(\CC^n)$ is compact if and only if
$$
\lim_{z \to \infty} m_z(\psi,\varphi) = 0.
$$
\end{itemize}
\end{cor}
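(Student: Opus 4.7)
The plan is to obtain Corollary~\ref{cor-iv} as a direct specialization of Theorems~\ref{thm-bounded} and \ref{thm-compact} to the case $s = \textnormal{rank}\,A = n$, together with the translation between the quantities $\ell_{z_{[s]}}(\widetilde{\psi},\widetilde{\varphi})$ and $m_z(\psi,\varphi)$ provided by Lemma~\ref{lem-as}. Since $A$ is invertible, $\widetilde{A}$ is a diagonal matrix with all diagonal entries strictly positive, so $\widetilde{A}_{[s]} = \widetilde{A}$, and in particular $|\det \widetilde{A}_{[s]}| = |\det \widetilde{A}| = |\det A|$ because $V$ and $U$ are unitary.

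For part (a), I would first note that when $s = n$ the convention immediately preceding the theorems reduces $\ell_{z_{[s]}}(\psi,\varphi)$ to $m_z(\psi,\varphi)$ for any pair in $\mathcal V_{q,n}$. Applying this to the normalization, Lemma~\ref{lem-as}(a) gives $m_z(\psi,\varphi) = \ell_{Uz}(\widetilde{\psi},\widetilde{\varphi})$ for every $z \in \CC^n$; since $U$ is unitary, the map $z \mapsto Uz$ is a bijection of $\CC^n$, so taking suprema yields $m(\psi,\varphi) = \ell(\widetilde{\psi},\widetilde{\varphi})$. Theorem~\ref{thm-bounded} then asserts that $W_{\psi,\varphi}$ is bounded iff $\ell(\widetilde{\psi},\widetilde{\varphi}) < \infty$, i.e.\ iff $m(\psi,\varphi) < \infty$, and the displayed norm estimates follow directly by substituting $\ell(\widetilde{\psi},\widetilde{\varphi}) = m(\psi,\varphi)$ and $|\det \widetilde{A}_{[n]}| = |\det A|$ into the bounds of Theorem~\ref{thm-bounded}.

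For part (b), I would again use $s = n$, so $z_{[s]} = z$, and apply Theorem~\ref{thm-compact} together with the identity $m_z(\psi,\varphi) = \ell_{Uz}(\widetilde{\psi},\widetilde{\varphi})$. Because $U$ is a unitary bijection of $\CC^n$, the condition $z \to \infty$ is equivalent to $Uz \to \infty$, hence
\[
\lim_{w \to \infty}\ell_w(\widetilde{\psi},\widetilde{\varphi}) = 0 \iff \lim_{z \to \infty} m_z(\psi,\varphi) = 0,
\]
and Theorem~\ref{thm-compact} translates into the stated criterion.

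There is essentially no obstacle: both parts are bookkeeping that tracks how the SVD change of variables identifies $\ell$ on the normalized pair with $m$ on the original pair once $A$ has full rank. The only point meriting care is verifying that $|\det \widetilde{A}_{[s]}| = |\det A|$ in the norm bound of (a), which is immediate from unitarity of $V$ and $U$.
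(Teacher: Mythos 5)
Your proposal is correct and follows exactly the route the paper intends: the paper derives Corollary~\ref{cor-iv} immediately from Theorems~\ref{thm-bounded}, \ref{thm-compact} and Lemma~\ref{lem-as}, using precisely the identifications $m(\psi,\varphi)=\ell(\widetilde{\psi},\widetilde{\varphi})$ and $|\det\widetilde{A}_{[n]}|=|\det A|$ that you spell out. No gaps.
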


In particular, when $\psi \equiv { \rm const }$ on $\CC^n$ we obtain the following result for composition operators.

\begin{cor}\label{cor-co}
Let $0< p \leq q < \infty$ and $\varphi: \CC^n \to \CC^n$ a holomorphic mapping. The following statements are true:
\begin{itemize}
\item[(a)] The operator $C_{\varphi}: \calF^p(\CC^n) \to \calF^q(\CC^n)$ is bounded if and only if $\varphi(z) = Az + b$, where $A$ is an $n \times n$ matrix and  $b$ is an $n \times 1$ vector such that $\|A\| \leq 1$ and $\langle A\zeta, b \rangle =0$ for every $\zeta$ in $\CC^n$ with $|A\zeta| = |\zeta|$.\
\item[(b)] The operator $C_{\varphi}: \calF^p(\CC^n) \to \calF^q(\CC^n)$ is compact if and only if $\varphi(z) = Az + b$, where $A$ is an $n \times n$ matrix and  $b$ is an $n \times 1$ vector such that $\|A\| < 1$.
\end{itemize}
\end{cor}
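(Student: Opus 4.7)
The plan is to apply Theorems \ref{thm-bounded} and \ref{thm-compact} with the constant weight $\psi\equiv 1$ and then translate the resulting conditions on the normalization $(\widetilde{\psi},\widetilde\varphi)$ back to intrinsic conditions on $(A,b)$. First I would dispose of the degenerate case: if the boundedness (or compactness) already holds, Proposition \ref{prop-nec} forces $\varphi(z)=Az+b$ with $\|A\|\le 1$; conversely, if $A=0$, then $\varphi\equiv b$ and Proposition \ref{prop-zero} (together with $1\in\calF^q(\CC^n)$) shows that $C_\varphi$ is automatically bounded and compact, matching the two claimed conditions vacuously. So I may assume $A\neq 0$ and $s=\textnormal{rank}A\ge 1$.

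Write the singular value decomposition $A=V\widetilde A U$ and let $(\widetilde\psi,\widetilde\varphi)=(1,\widetilde A z+\widetilde b)$, $\widetilde b=V^*b$, be the corresponding normalization of $(1,\varphi)$. Since $\widetilde A$ is diagonal, a direct expansion gives
\[
|\widetilde\varphi(z)|^2-|z_{[s]}|^2=\sum_{i=1}^{s}\bigl[(\widetilde a_{ii}^{2}-1)|z_i|^{2}+2\widetilde a_{ii}\re(z_i\overline{\widetilde b_i})\bigr]+\sum_{i=1}^{n}|\widetilde b_i|^{2},
\]
and consequently, using the convention $\|1(z_{[s]},\cdot)\|_{n-s,q}=1$ when $s=n$,
\[
\ell_{z_{[s]}}(\widetilde\psi,\widetilde\varphi)=\|1\|_{n-s,q}\,\exp\!\Bigl(\tfrac12\bigl[|\widetilde\varphi(z)|^{2}-|z_{[s]}|^{2}\bigr]\Bigr),
\]
which is a constant multiple of a function of $z_{[s]}$ only. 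The norm $\|1\|_{n-s,q}$ is a finite positive constant, so the analysis reduces to that of the exponent.

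For part (a), Theorem \ref{thm-bounded} says $C_\varphi$ is bounded iff the above supremum (over $z_{[s]}\in\CC^s$) is finite. Inspecting the $i$-th summand: if $\widetilde a_{ii}<1$ the quadratic term dominates and is bounded above; if $\widetilde a_{ii}=1$ the quadratic term vanishes and what remains is $2\re(z_i\overline{\widetilde b_i})$, bounded above iff $\widetilde b_i=0$. Hence $\ell(\widetilde\psi,\widetilde\varphi)<\infty$ iff $\widetilde b_i=0$ for every $i$ with $\widetilde a_{ii}=1$. For part (b), Theorem \ref{thm-compact} requires $\ell_{z_{[s]}}(\widetilde\psi,\widetilde\varphi)\to 0$, equivalently $|\widetilde\varphi(z)|^{2}-|z_{[s]}|^{2}\to -\infty$ as $z_{[s]}\to\infty$. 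This forces $\widetilde a_{ii}<1$ for every $i=1,\dots,s$, i.e.\ $\|A\|=\|\widetilde A\|=\widetilde a_{11}<1$; conversely when $\|A\|<1$ the exponent is a negative definite quadratic up to a linear term, so the limit is $-\infty$, as required.

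The remaining obstacle is to show that the normalized condition in (a) coincides with the invariant condition ``$\langle A\zeta,b\rangle=0$ whenever $|A\zeta|=|\zeta|$''. For this I would note that $|A\zeta|=|\widetilde A U\zeta|$, so $|A\zeta|=|\zeta|$ holds precisely when $\eta:=U\zeta$ is supported on the index set $I=\{i:\widetilde a_{ii}=1\}$, in which case $\widetilde A\eta=\eta$. Then
\[
\langle A\zeta,b\rangle=\langle V\widetilde A U\zeta,b\rangle=\langle\widetilde A\eta,\widetilde b\rangle=\sum_{i\in I}\eta_i\overline{\widetilde b_i},
\]
which vanishes for every admissible $\zeta$ (equivalently for every $\eta$ supported on $I$) iff $\widetilde b_i=0$ for all $i\in I$. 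This is exactly the normalized condition above, completing the correspondence and hence the proof of both (a) and (b).
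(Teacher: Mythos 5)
Your proposal is correct, and it follows the paper's overall strategy: reduce to the normalization $(1,\widetilde{\varphi})$ via the singular value decomposition, write out $\ell_{z_{[s]}}(1,\widetilde{\varphi})$ explicitly, and invoke Theorems \ref{thm-bounded} and \ref{thm-compact}. The one genuine difference is in how the normalized condition ``$\widetilde{b}_i=0$ whenever $\widetilde{a}_{ii}=1$'' is matched with the intrinsic condition ``$\langle A\zeta,b\rangle=0$ whenever $|A\zeta|=|\zeta|$.'' The paper handles necessity through Corollary \ref{cor-varphi} (which leans on the proof of Theorem 1 in \cite{CMS-03}) and sufficiency by citing \cite[Lemma 1]{CMS-03} to conclude that the first $j$ coordinates of $\widetilde{b}$ vanish; you instead prove this equivalence directly by observing that $|A\zeta|=|\zeta|$ forces $\eta=U\zeta$ to be supported on $I=\{i:\widetilde{a}_{ii}=1\}$, whence $\langle A\zeta,b\rangle=\sum_{i\in I}\eta_i\overline{\widetilde{b}_i}$. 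This makes your argument self-contained (no external citation needed for the translation) and treats both directions of (a) uniformly; you also explicitly dispose of the case $A=0$ via Proposition \ref{prop-zero}, which the paper leaves implicit since its normalization framework assumes $A\neq 0$. The cost is only a little extra linear algebra; the analytic core (the sign analysis of the exponent $\sum_{i=1}^{s}[(\widetilde{a}_{ii}^2-1)|z_i|^2+2\widetilde{a}_{ii}\re(z_i\overline{\widetilde{b}_i})]$, yielding boundedness iff $\widetilde{b}_i=0$ on $I$ and compactness iff $I=\emptyset$) is identical to the paper's.
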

\begin{proof}
(a) The necessity follows directly from Proposition \ref{prop-nec} and Corollary \ref{cor-varphi}. We prove the sufficiency. Suppose that $\varphi(z) = Az + b$ with $\textnormal{rank}A = s$ and $\widetilde{\varphi}(z) = \widetilde{A} z + \widetilde{b}$, where the singular value decomposition of $A$ is $V \widetilde{A} U$ and $\widetilde{b} = V^*b$.

Since $\|A\| \leq 1$, we have $\|\widetilde{A}\| \leq 1$, and hence,
$$
1 \geq \widetilde{a}_{11} \geq \widetilde{a}_{22} \geq ... \geq \widetilde{a}_{ss} \geq \widetilde{a}_{s+1,s+1} = ... = \widetilde{a}_{nn}=0.
$$
Put
$ j = \max\{i: \widetilde{a}_{ii} = 1\}$. By \cite[Lemma 1]{CMS-03}, the first $j$ coordinates of $\widetilde{b}$ are $0$.
Then, for every $z \in \CC^n$,
\begin{align}\label{eq-var}
\ell_{z_{[s]}}(1, \widetilde{\varphi}) &= e^{\frac{|\widetilde{\varphi}(z)|^2 - \left|z_{[s]}\right|^2}{2}} \|1\|_{n-s, q} \\
& = \exp{\frac{\sum_{i=j+1}^s (|\widetilde{a}_{ii}z_i+\widetilde{b}_i|^2- |z_i|^2) + \sum_{i=s+1}^n |\widetilde{b}_i|^2  }{2}}. \nonumber
\end{align}
Since $\widetilde{a}_{ii} < 1$ for all $j+1 \leq i \leq s$, we have $\ell(1, \widetilde{\varphi}) < \infty$. By Theorem \ref{thm-bounded}, the operator $C_{\widetilde{\varphi}}: \calF^p(\CC^n) \to \calF^q(\CC^n)$ is bounded, and hence, by Proposition \ref{prop-equiv}, so is $C_{\varphi}$.

(b) By Proposition \ref{prop-equiv}, Theorem \ref{thm-compact}, and \eqref{eq-var}, the operators $C_{\varphi}$ and $C_{\widetilde{\varphi}}: \calF^p(\CC^n) \to \calF^q(\CC^n)$ are compact if and only if
$$
\lim_{z_{[s]} \to \infty} \ell_{z_{[s]}}(1, \widetilde{\varphi}) = 0,
$$
i. e.
$$
\lim_{z_{[s]} \to \infty} \left( \sum_{i=j+1}^s (|\widetilde{a}_{ii}z_i+\widetilde{b}_i|^2- |z_i|^2) + \sum_{i=s+1}^n |\widetilde{b}_i|^2 \right) = -\infty,
$$
which is equivalent to that $j = 0$, that is, $\widetilde{a}_{ii} < 1$ for all $1 \leq i \leq s$, and hence, $\|\widetilde{A}\| < 1$ and $\|A\| < 1$.
\end{proof}

\subsection{The case $0 < q < p < \infty$}

For each pair $(\psi, \varphi)$ in $\mathcal V_{q,s}$, we define the following positive pull-back measure $\mu_{\psi, \varphi, q}$ on $\CC^s$
$$
\mu_{\psi, \varphi, q}(E) = \left( \dfrac{q}{2\pi}\right)^{s} \int_{\varphi_{[s]}^{-1}(E)} \|\psi(z_{[s]},\cdot)\|_{n-s,q}^q e^{-\frac{q\left|z_{[s]}\right|^2}{2}}dA(z_{[s]}),
$$
for every Borel subset $E$ of $\CC^s$, where $\varphi_{[s]}(z_{[s]}) = A_{[s]}z_{[s]} + b_{[s]}, \ z_{[s]} \in \CC^s$.

We recall that for $p, q \in (0, \infty)$ a positive Borel measure $\mu$ on $\CC^s$ is called a \textit{$(p,q)$-Fock Carleson measure}, if the embedding operator $i: \calF^p(\CC^s) \to L^q(\CC^s,d\mu)$ is bounded, i.e. if there exists a constant $C>0$ such that for every $f \in \calF^p(\CC^s)$,
$$
\left( \int_{\CC^s} |f(z)|^q e^{-\frac{q|z|^2}{2}}d\mu(z) \right)^{\frac{1}{q}} \leq C \|f\|_{s, p}.
$$
We write $\|\mu\|$ for the operator norm of $i$ from $\calF^p(\CC^s)$ to $L^q(\CC^s,d\mu)$ and refer the reader to \cite[Section 3]{HL-11} for more information about $(p,q)$-Fock Carleson measure.

\begin{thm}\label{thm-bd}
Let $0 < q < p < \infty$ and $(\psi, \varphi)$ be a pair in $\calW_q$ with $\varphi(z) = Az + b$ and $\text{rank} A = s$. Then the following assertions are equivalent:
\begin{itemize}
\item[(i)] $W_{\psi,\varphi}: \calF^p(\CC^n) \to \calF^q(\CC^n)$ is bounded;
\item[(ii)] $W_{\psi,\varphi}: \calF^p(\CC^n) \to \calF^q(\CC^n)$ is compact;
\item[(iii)] $\ell_{z_{[s]}}(\widetilde{\psi},\widetilde{\varphi}) \in L^{\frac{pq}{p-q}}(\CC^s,dA)$,
\end{itemize}
where (as in Theorem \ref{thm-bounded}) $(\widetilde{\psi}, \widetilde{\varphi})$ is the normalization of $(\psi, \varphi)$ with respect to the singular value decomposition $A = V\widetilde{A}U$.

In this case, for some positive constant $C$,
\begin{align*}
C^{-1} |\textnormal{det}\widetilde{A}_{[s]}|^{\frac{2(p-q)}{pq}}& e^{-\frac{\left|\widetilde{b}'_{[s]}\right|^2}{2}} \|\ell_{z_{[s]}}(\widetilde{\psi},\widetilde{\varphi})  \|_{L^{\frac{pq}{p-q}}}\\
 & \leq \|W_{\psi,\varphi}\| \leq C |\textnormal{det}\widetilde{A}_{[s]}|^{-\frac{2}{p}} \|\ell_{z_{[s]}}(\widetilde{\psi},\widetilde{\varphi})\|_{L^{\frac{pq}{p-q}}}.
\end{align*}
\end{thm}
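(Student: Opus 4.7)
The plan is to reduce to pairs $(\psi,\varphi) \in \mathcal V_{q,s}$ via Proposition \ref{prop-equiv}, then recast the boundedness/compactness of $W_{\psi,\varphi}$ as a Fock--Carleson question on $\CC^s$, and apply the Luecking--Hu--Lv characterization from \cite{HL-11}. In the normalized setting, $A_{[s]}$ is diagonal and invertible, and the change of variables $w = A_{[s]} z_{[s]} + b_{[s]}$ in \eqref{eq-normf} combined with the definition of $\ell_{z_{[s]}}(\psi,\varphi)$ gives the key identity
$$
\|W_{\psi,\varphi}f\|_{n,q}^q = \left(\frac{q}{2\pi}\right)^{s} |\det A_{[s]}|^{-2} e^{-\frac{q|b'_{[s]}|^2}{2}}\int_{\CC^s}|f(w,b'_{[s]})|^q\, L(w)^q\, e^{-\frac{q|w|^2}{2}}\, dA(w),
$$
where $L(w) := \ell_{\varphi_{[s]}^{-1}(w)}(\psi,\varphi)$. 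Since (ii)$\Rightarrow$(i) is trivial, only (iii)$\Rightarrow$(ii) and (i)$\Rightarrow$(iii) require argument.

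For (iii)$\Rightarrow$(ii), I would apply H\"older's inequality with exponents $p/q$ and $p/(p-q)$ to the integral above, which separates the weighted $L^p$-quantity of $f(\cdot,b'_{[s]})$ from the $L^{pq/(p-q)}$-norm of $L$. Lemma \ref{lem-F}(ii) bounds $\|f(\cdot,b'_{[s]})\|_{s,p}$ by $e^{|b'_{[s]}|^2/2}\|f\|_{n,p}$, so the exponential factors cancel; reversing the change of variables converts $\|L\|_{L^{pq/(p-q)}}$ into $|\det A_{[s]}|^{2(p-q)/(pq)} \|\ell_{z_{[s]}}(\psi,\varphi)\|_{L^{pq/(p-q)}}$, yielding the stated upper bound on $\|W_{\psi,\varphi}\|$. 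For compactness, a cutoff at $|w| = R$ handles a bounded sequence $(f_j)$ in $\calF^p(\CC^n)$ converging to $0$ in $\calO(\CC^n)$: on $|w|\leq R$ the integrand goes to $0$ because $f_j\to 0$ uniformly on compact subsets of $\CC^n$, while on $|w|>R$ it is controlled by the tail $\int_{|w|>R} L^{pq/(p-q)}\,dA$, which vanishes as $R\to\infty$; Lemma \ref{lem-com} then yields compactness.

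For (i)$\Rightarrow$(iii), I would use the test function $f(z_{[s]}, z'_{[s]}) := g(z_{[s]})$ for $g \in \calF^p(\CC^s)$, for which a direct Gaussian computation gives $\|f\|_{n,p} = \|g\|_{s,p}$ and $f(w, b'_{[s]}) = g(w)$. Inserting into the key identity and using $\|W_{\psi,\varphi}f\|_{n,q} \leq \|W_{\psi,\varphi}\|\,\|f\|_{n,p}$ yields
$$
\int_{\CC^s} |g(w)|^q L(w)^q e^{-\frac{q|w|^2}{2}}\, dA(w) \leq \left(\frac{2\pi}{q}\right)^s |\det A_{[s]}|^{2} e^{\frac{q|b'_{[s]}|^2}{2}}\|W_{\psi,\varphi}\|^q \|g\|_{s,p}^q,
$$
which is precisely the statement that $d\nu := L^q\, dA$ is a $(p,q)$-Fock Carleson measure on $\CC^s$. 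Since $q < p$, the Luecking--Hu--Lv characterization for Fock--Carleson measures gives $L \in L^{pq/(p-q)}(\CC^s, dA)$ with $\|L\|_{L^{pq/(p-q)}}$ controlled by the Carleson constant of $\nu$. Reversing the change of variables converts this into $|\det A_{[s]}|^{2(p-q)/(pq)}\|\ell_{z_{[s]}}(\psi,\varphi)\|_{L^{pq/(p-q)}}$ and produces the claimed lower bound; the chain (iii)$\Rightarrow$(ii)$\Rightarrow$(i)$\Rightarrow$(iii) then closes the equivalence.

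The main obstacle will be the (i)$\Rightarrow$(iii) step, and specifically the quantitative invocation of the Luecking--Hu--Lv characterization for the specific measure $d\nu = L^q\, dA$. One must verify that this characterization produces the two-sided comparison $\|L\|_{L^{pq/(p-q)}} \asymp \|\nu\|_{\text{Carleson}}$ for a density of this form (essentially by combining Young's inequality on the Berezin transform $L^q\ast e^{-q|\cdot|^2/2}$ with a submean/averaging argument), and then carefully track the $|\det A_{[s]}|$ and $e^{|b'_{[s]}|^2/2}$ factors arising from the Jacobian and Lemma \ref{lem-F}(ii) so that the final norm estimate matches the form stated in the theorem.
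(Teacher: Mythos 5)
Your proposal is correct and follows essentially the same route as the paper: reduction to $\mathcal V_{q,s}$ via Proposition \ref{prop-equiv}, the H\"older--plus--cutoff argument with Lemmas \ref{lem-F} and \ref{lem-com} for (iii)$\Rightarrow$(ii), and for (i)$\Rightarrow$(iii) the observation that testing on functions of $z_{[s]}$ alone exhibits (a constant multiple of) $L^q\,dA$ as a $(p,q)$-Fock Carleson measure on $\CC^s$, which is exactly the paper's measure $\lambda_{\psi,\varphi,q}$ after the change of variables $w=\varphi_{[s]}(z_{[s]})$. The one step you flag as the main obstacle --- passing from $\widetilde{\nu}\in L^{p/(p-q)}$ to $L\in L^{pq/(p-q)}$ --- is discharged in the paper not by a convolution/Young argument but by noting that $\widetilde{\lambda_{\psi,\varphi,q}}(w_{[s]})=\|\psi\cdot(k_{w_{[s]}}\circ\varphi_{[s]})\|^q_{n,q}$ is the norm of an entire function on $\CC^n$, so Lemma \ref{lem-F} applied at the point $w_{[s]}=\varphi_{[s]}(z_{[s]})$ gives the pointwise bound $\widetilde{\lambda_{\psi,\varphi,q}}(\varphi_{[s]}(z_{[s]}))\geq e^{-\frac{q|b'_{[s]}|^2}{2}}\ell^q_{z_{[s]}}(\psi,\varphi)$, after which the same Jacobian bookkeeping you describe yields the stated lower estimate.
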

\begin{proof} Similarly to the proof of Theorem \ref{thm-bounded}, it suffices to prove the theorem for the operator $W_{\psi,\varphi}$ induced by $(\psi, \varphi)$ in $\mathcal V_{q, s}$, and then using Proposition \ref{prop-equiv} to complete the proof.

Again note that for $(\psi, \varphi)$ in $\mathcal V_{q, s}$, $(\widetilde{\psi}, \widetilde{\varphi}) = (\psi, \varphi)$.

$\bullet$ (ii) $\Longrightarrow$ (i) is obvious.

$\bullet$ (i) $\Longrightarrow$ (iii). Suppose that the operator $W_{\psi,\varphi}: \calF^p(\CC^n) \to \calF^q(\CC^n)$ is bounded. Then by \eqref{eq-normf}, for every $f \in \calF^p(\CC^n)$
\begin{align*}
&\|W_{\psi,\varphi}\| \|f\|_{n,p} \geq  \|W_{\psi,\varphi}f\|_{n, q} \\
 = & \left(\left(\dfrac{q}{2\pi}\right)^s \int_{\CC^s} \|\psi(z_{[s]},\cdot)\|_{n-s,q}^q |f(Az+b)|^q e^{-\frac{q\left|z_{[s]}\right|^2}{2}}dA(z_{[s]}) \right)^{\frac{1}{q}} \\
 = & \left(\left(\dfrac{q}{2\pi}\right)^s \int_{\CC^s} \|\psi(z_{[s]},\cdot)\|_{n-s,q}^q \left|f(A_{[s]}z_{[s]}+b_{[s]}, b'_{[s]})\right|^q e^{-\frac{q\left|z_{[s]}\right|^2}{2}}dA(z_{[s]}) \right)^{\frac{1}{q}}.
\end{align*}
This implies that for every $f \in \calF^p(\CC^s)$, i.e. for every $f \in \calF^p(\CC^n)$ independent on $(z_{s+1},...,z_n)$, we have
\begin{align*}
&\|W_{\psi,\varphi}\| \|f\|_{s,p} = \|W_{\psi,\varphi}\| \|f\|_{n,p} \\
 = & \left(\left(\dfrac{q}{2\pi}\right)^s \int_{\CC^s} \|\psi(z_{[s]},\cdot)\|_{n-s,q}^q \left|f(A_{[s]}z_{[s]}+b_{[s]})\right|^q e^{-\frac{q\left|z_{[s]}\right|^2}{2}}dA(z_{[s]}) \right)^{\frac{1}{q}} \\
 =& \left( \int_{\CC^s} \left|f(\zeta_{[s]})\right|^q d\mu_{\psi,\varphi,q}(\zeta_{[s]}) \right)^{\frac{1}{q}}
  = \left( \int_{\CC^s} |f(\zeta_{[s]})|^q e^{-\frac{q\left|\zeta_{[s]}\right|^2}{2}}d\lambda_{\psi,\varphi,q}(\zeta_{[s]}) \right)^{\frac{1}{q}},
\end{align*}
where $d\lambda_{\psi,\varphi,q}(\zeta_{[s]}) = e^{\frac{q\left|\zeta_{[s]}\right|^2}{2}} d\mu_{\psi,\varphi,q}(\zeta_{[s]})$.
The last inequality means that $\lambda_{\psi,\varphi,q}$ is a $(p, q)$-Fock Carleson measure on $\CC^s$. Then, by \cite[Theorem 3.3]{HL-11}, we have
\begin{equation} \label{eq-lam}
\widetilde{\lambda_{\psi,\varphi,q}}(w_{[s]}) = \int_{\CC^s} \left|k_{w_{[s]}}(z_{[s]})\right|^q e^{-\frac{q\left|z_{[s]}\right|^2}{2}} d\lambda_{\psi,\varphi,q}(z_{[s]}) \in L^{\frac{p}{p-q}}(\CC^s, dA).
\end{equation}
On the other hand, for all $w \in \CC^n$,
\begin{align*}
& \widetilde{\lambda_{\psi,\varphi,q}}(w_{[s]}) \\
= & \int_{\CC^s} \left|k_{w_{[s]}}(z_{[s]})\right|^q e^{-\frac{q\left|z_{[s]}\right|^2}{2}} d\lambda_{\psi,\varphi,q}(z_{[s]})
=  \int_{\CC^s} \left|k_{w_{[s]}}(z_{[s]})\right|^q d\mu_{\psi,\varphi,q}(z_{[s]}) \\
= & \left( \dfrac{q}{2\pi} \right)^s \int_{\CC^s}\|\psi(z_{[s]},\cdot)\|_{n-s,q}^q \left|k_{w_{[s]}}(A_{[s]}z_{[s]} + b_{[s]})\right|^q e^{-\frac{q\left|z_{[s]}\right|^2}{2}}dA(z_{[s]}) \\
= & \left( \dfrac{q}{2\pi} \right)^n \int_{\CC^n}|\psi(z_{[s]},z'_{[s]})|^q \left|k_{w_{[s]}}(A_{[s]}z_{[s]} + b_{[s]})\right|^q e^{-\frac{q|z|^2}{2}}dA(z) \\
= & \left( \dfrac{q}{2\pi} \right)^n \int_{\CC^n}\left|\psi(z) k_{w_{[s]}}(A_{[s]}z_{[s]} + b_{[s]})\right|^q e^{-\frac{q|z|^2}{2}}dA(z) \\
= &\; \|\psi \cdot ( k_{w_{[s]}}\circ \varphi_{[s]} )\|^q_{n, q},
\end{align*}
where, as above, $\varphi_{[s]}(z_{[s]}) = A_{[s]}z_{[s]} + b_{[s]}$.
From this and Lemma \ref{lem-F}, it follows that for all $w, z \in \CC^n$,
\begin{align*}
 \widetilde{\lambda_{\psi,\varphi,q}}(w_{[s]}) & \geq e^{- \frac{q\left|z_{[s]}\right|^2}{2}}  \|\psi(z_{[s]},\cdot) (k_{w_{[s]}}\circ \varphi_{[s]})(z_{[s]})\|^q_{n-s, q}\\
& = \left|e^{\langle A_{[s]}z_{[s]} + b_{[s]}, w_{[s]}\rangle - \frac{\left|z_{[s]} \right|^2+\left|w_{[s]}\right|^2}{2}} \right|^q  \|\psi(z_{[s]},\cdot)\|^q_{n-s, q}.
\end{align*}
In particular, with $w = \varphi(z) = Az + b$, we get
\begin{align*}
& \widetilde{\lambda_{\psi,\varphi,q}}(\varphi_{[s]}(z_{[s]}))  \geq e^{\frac{q\left(\left|\varphi_{[s]}(z_{[s]})\right|^2 - \left|z_{[s]}\right|^2\right)}{2}} \|\psi(z_{[s]},\cdot)\|^q_{n-s, q} \\
& =  e^{-\frac{q\left|b'_{[s]}\right|^2}{2}}e^{\frac{q\left(\left|\varphi(z)\right|^2 - \left|z_{[s]}\right|^2\right)}{2}} \|\psi(z_{[s]},\cdot)\|^q_{n-s, q}
 = e^{-\frac{q\left|b'_{[s]}\right|^2}{2}} \ell^q_{z_{[s]}}(\psi,\varphi),
\end{align*}
for all $z \in \CC^n$. Combining this and \eqref{eq-lam} yields
\begin{align*}
& e^{-\frac{qp\left|b'_{[s]}\right|^2}{2(p-q)}} \int_{\CC^s} (\ell_{z_{[s]}}(\psi,\varphi))^{\frac{pq}{p-q}}dA(z_{[s]})
\leq \int_{\CC^s} \left(\widetilde{\lambda_{\psi,\varphi,q}}(\varphi_{[s]}(z_{[s]}))\right)^{\frac{p}{p-q}}dA(z_{[s]}) \\
& = |\textnormal{det}A_{[s]}|^{-2} \int_{\CC^s} \left( \widetilde{\lambda_{\psi,\varphi,q}}(\zeta_{[s]})\right)^{\frac{p}{p-q}}dA(\zeta_{[s]})
 =  |\textnormal{det}A_{[s]}|^{-2} \|\widetilde{\lambda_{\psi,\varphi,q}}\|_{L^{\frac{p}{p-q}}}^{\frac{p}{p-q}} < \infty.
\end{align*}
Therefore, $\ell_{z_{[s]}}(\psi,\varphi) \in L^{\frac{pq}{p-q}}(C^s,dA)$ and
$$
|\textnormal{det}A_{[s]}|^{\frac{2(p-q)}{p}} e^{-\frac{q\left|b'_{[s]}\right|^2}{2}} \|\ell_{z_{[s]}}(\psi,\varphi)\|^q_{L^{\frac{pq}{p-q}}} \leq \|\widetilde{\lambda_{\psi,\varphi,q}}\|_{L^{\frac{p}{p-q}}}.
$$
From this and \cite[Theorem 3.3]{HL-11}, we see that for some constant $C_1 > 0$,
\begin{align*}
\|W_{\psi,\varphi}\|^q &= \|\lambda_{\psi,\varphi,q}\|^q \geq C_1 \|\widetilde{\lambda_{\psi,\varphi,q}}\|_{L^{\frac{p}{p-q}}} \\
& \geq C_1 |\textnormal{det}A_{[s]}|^{\frac{2(p-q)}{p}} e^{-\frac{q\left|b'_{[s]}\right|^2}{2}} \|\ell_{z_{[s]}}(\psi,\varphi)\|^q_{L^{\frac{pq}{p-q}}},
\end{align*}
which gives
\begin{equation}\label{eq-norm1}
\|W_{\psi,\varphi}\|\geq C_1^{\frac{1}{q}} |\textnormal{det}A_{[s]}|^{\frac{2(p-q)}{pq}} e^{-\frac{\left|b'_{[s]}\right|^2}{2}} \|\ell_{z_{[s]}}(\psi,\varphi)\|_{L^{\frac{pq}{p-q}}}.
\end{equation}

$\bullet$ (iii) $\Longrightarrow$ (ii). For each function $f \in \calF^p(\CC^n)$, using \eqref{eq-normf}, H\"older's inequality and Lemma \ref{lem-F}, we have
\begin{align*}
& \|W_{\psi, \varphi}f\|^q_{n, q} = \left(\dfrac{q}{2\pi}\right)^s \int_{\CC^s} \|\psi(z_{[s]},\cdot)\|_{n-s,q}^q \left|f(Az+b)\right|^q e^{-\frac{q\left|z_{[s]}\right|^2}{2}}dA(z_{[s]})\\
= & \left(\dfrac{q}{2\pi}\right)^s \int_{\CC^s} \ell^q_{z_{[s]}}(\psi, \varphi) \left|f(Az+b)\right|^q e^{-\frac{q\left|Az+b\right|^2}{2}}dA(z_{[s]})\\
 \leq & \left(\dfrac{q}{2\pi}\right)^s \left(\int_{\CC^s} \ell_{z_{[s]}}^{\frac{pq}{p-q}}(\psi, \varphi) dA(z_{[s]})\right)^{\frac{p-q}{p}} \\
& \qquad \qquad \times \left( \int_{\CC^s} \left|f(Az+b)\right|^p e^{-\frac{p\left|Az+b\right|^2}{2}}dA(z_{[s]})\right)^{\frac{q}{p}} \\
 =&  \left(\dfrac{q}{2\pi}\right)^s \left(\dfrac{2\pi}{p}\right)^{\frac{sq}{p}} \|\ell_{z_{[s]}}(\psi,\varphi)\|_{L^{\frac{pq}{p-q}}}^q |\textnormal{det}A_{[s]}|^{-\frac{2q}{p}} e^{-\frac{q\left|b'_{[s]}\right|^2}{2}} \|f(\cdot,b'_{[s]})\|^q_{s,p} \\
 \leq & \left(\dfrac{q}{2\pi}\right)^s \left(\dfrac{2\pi}{p}\right)^{\frac{sq}{p}} \|\ell_{z_{[s]}}(\psi,\varphi)\|_{L^{\frac{pq}{p-q}}}^q |\textnormal{det}A_{[s]}|^{-\frac{2q}{p}} \|f\|^q_{n,p}.
\end{align*}
This shows that the operator $W_{\psi, \varphi}: \calF^p(\CC^n) \to \calF^q(\CC^n)$ is bounded and
\begin{equation}\label{eq-norm2}
\|W_{\psi,\varphi}\| \leq \left(\dfrac{q}{2\pi}\right)^{\frac{s}{q}} \left(\dfrac{2\pi}{p}\right)^{\frac{s}{p}} |\textnormal{det}A_{[s]}|^{-\frac{2}{p}} \|\ell_{z_{[s]}}(\psi,\varphi)\|_{L^{\frac{pq}{p-q}}}.
\end{equation}

Next, let $(f_j)_j$ be an arbitrary bounded sequence in $\calF^p(\CC^n)$ converging to $0$ in $\calO(\CC^n)$. For each $j \in \NN$ and $R>0$, we have
\begin{align*}
\|W_{\psi,\varphi}f_j\|_{n,q}^q =& \left(\dfrac{q}{2\pi}\right)^s \int_{\CC^s} \ell^q_{z_{[s]}}(\psi, \varphi) |f_j(Az+b)|^q e^{-\frac{q|Az+b|^2}{2}}dA(z_{[s]})\\
=& \left(\dfrac{q}{2\pi}\right)^s \int_{|z_{[s]}|\leq R} \ell^q_{z_{[s]}}(\psi, \varphi) \left|f_j(Az+b)\right|^q e^{-\frac{q|Az+b|^2}{2}}dA(z_{[s]})\\
+& \left(\dfrac{q}{2\pi}\right)^s \int_{|z_{[s]}|>R} \ell^q_{z_{[s]}}(\psi, \varphi) \left|f_j(Az+b)\right|^q e^{-\frac{q|Az+b|^2}{2}}dA(z_{[s]})\\
=&\; \calI(j,R) + \calJ(j,R).
\end{align*}
On one hand, for $\calI(j,R)$, we have
\begin{align*}
\calI(j,R) &\leq \left(\dfrac{q}{2\pi}\right)^s \max_{|z_{[s]}|\leq R} |f_j(Az+b)|^q \int_{|z_{[s]}|\leq R} \ell^q_{z_{[s]}}(\psi, \varphi) e^{-\frac{q|Az+b|^2}{2}}dA(z_{[s]})\\
& = \left(\dfrac{q}{2\pi}\right)^s \max_{|z_{[s]}|\leq R} |f_j(Az+b)|^q \int_{|z_{[s]}|\leq R} \|\psi(z_{[s]},\cdot)\|_{n-s,q}^q e^{-\frac{q \left|z_{[s]}\right|^2}{2}}dA(z_{[s]})\\
& \leq \|\psi\|^q_{n,q} \max_{|z_{[s]}|\leq R} |f_j(Az+b)|^q.
\end{align*}
On the other hand, for $\calJ(j,R)$, again using H\"older's inequality and Lemma \ref{lem-F}, we get
\begin{align*}
&\calJ(j,R) = \left(\dfrac{q}{2\pi}\right)^s \int_{|z_{[s]}|>R} \ell^q_{z_{[s]}}(\psi, \varphi) |f_j(Az+b)|^q e^{-\frac{q|Az+b|^2}{2}}dA(z_{[s]})\\
 \leq & \left(\dfrac{q}{2\pi}\right)^s \left(\int_{|z_{[s]}|>R} \ell^{\frac{pq}{p-q}}_{z_{[s]}}(\psi, \varphi) dA(z_{[s]})\right)^{\frac{p-q}{p}} \\
& \qquad \qquad \qquad \times \left( \int_{|z_{[s]}|>R} |f_j(Az+b)|^p e^{-\frac{p|Az+b|^2}{2}}dA(z_{[s]})\right)^{\frac{q}{p}} \\
\leq & \left(\dfrac{q}{2\pi}\right)^s \left(\dfrac{2\pi}{p}\right)^{\frac{sq}{p}} |\textnormal{det}A_{[s]}|^{-\frac{2q}{p}} e^{-\frac{q\left|b'_{[s]}\right|^2}{2}} \|f_j(\cdot,b'_{[s]})\|^q_{s,p}\\
& \qquad \qquad \qquad \times \left(\int_{|z_{[s]}|>R} \ell^{\frac{pq}{p-q}}_{z_{[s]}}(\psi, \varphi) dA(z_{[s]})\right)^{\frac{p-q}{p}}\\
 \leq & \left(\dfrac{q}{2\pi}\right)^s \left(\dfrac{2\pi}{p}\right)^{\frac{sq}{p}} |\textnormal{det}A_{[s]}|^{-\frac{2q}{p}} \|f_j\|^q_{n,p} \left(\int_{|z_{[s]}|>R} \ell^{\frac{pq}{p-q}}_{z_{[s]}}(\psi, \varphi) dA(z_{[s]})\right)^{\frac{p-q}{p}}\\
 \leq & M^q \left(\int_{|z_{[s]}|>R} \ell^{\frac{pq}{p-q}}_{z_{[s]}}(\psi, \varphi) dA(z_{[s]})\right)^{\frac{p-q}{p}},
\end{align*}
where
$$
M^q = \left(\dfrac{q}{2\pi}\right)^s \left(\dfrac{2\pi}{p}\right)^{\frac{sq}{p}} |\textnormal{det}A_{[s]}|^{-\frac{2q}{p}} \sup_{j} \|f_j\|^q_{n,p} < \infty.
$$
Therefore, for every $R>0$ we get
\begin{align*}
\limsup_{j\to \infty} \|W_{\psi,\varphi}f_j\|_{n,q}^q & \leq \limsup_{j \to \infty} (\calI(j,R) + \calJ(j,R))\\
& \leq M^q \left( \int_{|z_{[s]}|>R} \ell^{\frac{pq}{p-q}}_{z_{[s]}}(\psi, \varphi) dA(z_{[s]})\right)^{\frac{p-q}{p}}.
\end{align*}
Since $\ell_{z_{[s]}}(\psi, \varphi) \in L^{\frac{pq}{p-q}}(\CC^s,dA)$, letting $R \to \infty$ in the last inequality, we get that $W_{\psi,\varphi}f_j$ converges to $0$ in $\calF^q(\CC^n)$ as $j \to \infty$.

Consequently, by Lemma \ref{lem-com}, the operator $W_{\psi,\varphi}: \calF^p(\CC^n) \to \calF^q(\CC^n)$ is compact.

Finally, the desired estimates for $\|W_{\psi,\varphi}\|$ follow from \eqref{eq-norm1} and \eqref{eq-norm2}.
\end{proof}

From Theorems \ref{thm-bd} and Lemma \ref{lem-as}, we obtain immediately the following result for the case when $A$ is invertible.

\begin{cor}\label{cor-iv-1}
Let $0 < q < p < \infty$ and $(\psi,\varphi)$ be a pair in $\calW_q$ with $\varphi(z) = Az + b$ and $A$ is invertible. Then the following assertions are equivalent:
\begin{itemize}
\item[(i)] $W_{\psi,\varphi}: \calF^p(\CC^n) \to \calF^q(\CC^n)$ is bounded;
\item[(ii)] $W_{\psi,\varphi}: \calF^p(\CC^n) \to \calF^q(\CC^n)$ is compact;
\item[(iii)] $m_z(\psi,\varphi) \in L^{\frac{pq}{p-q}}(\CC^n, dA)$.
\end{itemize}
In this case, for some positive constant $C$,
\begin{align*}
C^{-1} |\textnormal{det}A|^{\frac{2(p-q)}{pq}}& \|m_{z}(\psi, \varphi)  \|_{L^{\frac{pq}{p-q}}}\\
 & \leq \|W_{\psi,\varphi}\| \leq C |\textnormal{det}A|^{-\frac{2}{p}} \|m_{z}(\psi,\varphi)\|_{L^{\frac{pq}{p-q}}}.
\end{align*}
\end{cor}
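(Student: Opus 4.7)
The plan is to deduce Corollary \ref{cor-iv-1} directly from Theorem \ref{thm-bd} together with Lemma \ref{lem-as}, specializing to the case when $A$ is invertible (so $s = n$).

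First I would observe that when $\textnormal{rank}\,A = n$, the singular value decomposition $A = V\widetilde{A}U$ has $\widetilde{A}$ invertible, so $\widetilde{A}_{[s]} = \widetilde{A}$, the complementary coordinates $\widetilde{b}'_{[s]}$ are empty (giving the factor $e^{-|\widetilde{b}'_{[s]}|^2/2} = 1$), and $\ell_{z_{[s]}}(\widetilde{\psi},\widetilde{\varphi}) = \ell_z(\widetilde{\psi},\widetilde{\varphi})$ is defined on all of $\CC^n$. Moreover $|\det \widetilde{A}| = |\det A|$ because $V$ and $U$ are unitary. Thus the hypothesis $\ell_{z_{[s]}}(\widetilde{\psi},\widetilde{\varphi}) \in L^{\frac{pq}{p-q}}(\CC^s,dA)$ from Theorem \ref{thm-bd} becomes $\ell_z(\widetilde{\psi},\widetilde{\varphi}) \in L^{\frac{pq}{p-q}}(\CC^n,dA)$.

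Next I would invoke Lemma \ref{lem-as}(a), which in the invertible case gives the pointwise identity $m_z(\psi,\varphi) = \ell_{Uz}(\widetilde{\psi},\widetilde{\varphi})$ for every $z \in \CC^n$. Since $U$ is unitary, the change of variables $z \mapsto Uz$ preserves the Lebesgue measure $dA$ on $\CC^n$, and consequently
\[
\|m_z(\psi,\varphi)\|_{L^{\frac{pq}{p-q}}(\CC^n,dA)} = \|\ell_z(\widetilde{\psi},\widetilde{\varphi})\|_{L^{\frac{pq}{p-q}}(\CC^n,dA)}.
\]
In particular the condition (iii) of the corollary is equivalent to the condition (iii) of Theorem \ref{thm-bd} applied to $(\widetilde{\psi},\widetilde{\varphi})$.

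Finally, by Proposition \ref{prop-equiv}, $W_{\psi,\varphi}$ and $W_{\widetilde{\psi},\widetilde{\varphi}}$ are simultaneously bounded, simultaneously compact, and have the same operator norm. Substituting the identifications above into the two-sided estimate for $\|W_{\widetilde{\psi},\widetilde{\varphi}}\|$ furnished by Theorem \ref{thm-bd} yields exactly the stated bounds for $\|W_{\psi,\varphi}\|$, and the equivalence (i) $\Leftrightarrow$ (ii) $\Leftrightarrow$ (iii) follows at once. There is no real obstacle here; the only point requiring care is the bookkeeping that matches $\widetilde{A}_{[s]}$ with $A$ and eliminates the $e^{-|\widetilde{b}'_{[s]}|^2/2}$ factor, which is automatic because $s = n$.
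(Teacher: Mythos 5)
Your proposal is correct and follows essentially the same route as the paper, which deduces the corollary directly from Theorem \ref{thm-bd} and Lemma \ref{lem-as}; you have merely made explicit the bookkeeping (the identity $m_z(\psi,\varphi)=\ell_{Uz}(\widetilde{\psi},\widetilde{\varphi})$, the unitary change of variables, $|\det\widetilde{A}_{[s]}|=|\det A|$, and the vanishing of $|\widetilde{b}'_{[s]}|$ when $s=n$) that the paper leaves as immediate.
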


In particular, when $\psi \equiv { \rm const }$ on $\CC^n$ we get the following result for composition operators.

\begin{cor}\label{cor-co-1}
Let $0< q < p < \infty$ and $\varphi: \CC^n \to \CC^n$ a holomorphic mapping. The following assertions are equivalent:
\begin{itemize}
\item[(i)] $C_{\varphi}: \calF^p(\CC^n) \to \calF^q(\CC^n)$ is bounded;
\item[(ii)] $C_{\varphi}: \calF^p(\CC^n) \to \calF^q(\CC^n)$ is compact;
\item[(iii)] $\varphi(z) = Az + b$, where $A$ is an $n \times n$ matrix with $\|A\| < 1$ and $b$ is an $n \times 1$ vector.
\end{itemize}
\end{cor}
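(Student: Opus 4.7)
The plan is to apply Theorem~\ref{thm-bd} with $\psi\equiv 1$, after first using Proposition~\ref{prop-nec} and Corollary~\ref{cor-varphi} to fix the affine form of $\varphi$, and then to reuse the explicit formula for $\ell_{z_{[s]}}(1,\widetilde{\varphi})$ derived during the proof of Corollary~\ref{cor-co} (equation~\eqref{eq-var}). The implication (ii)$\Rightarrow$(i) is trivial, and the case $A=0$ is handled directly by Proposition~\ref{prop-zero} (then $C_\varphi$ has rank one, and $\|A\|=0<1$), so I may assume $(1,\varphi)\in\calW_q$ throughout and focus on (i)$\Rightarrow$(iii) and (iii)$\Rightarrow$(ii).

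For (i)$\Rightarrow$(iii): Boundedness of $C_\varphi$ gives, via Proposition~\ref{prop-nec} and Corollary~\ref{cor-varphi}, that $\varphi(z)=Az+b$ with $\|A\|\le 1$. Let $\widetilde{\varphi}(z)=\widetilde{A}z+\widetilde{b}$ be the normalization, with $s=\mathrm{rank}\,A$ and $j=\max\{i:\widetilde{a}_{ii}=1\}$ (setting $j=0$ if no such $i$ exists). By Theorem~\ref{thm-bd}, $\ell_{z_{[s]}}(1,\widetilde{\varphi})\in L^{pq/(p-q)}(\CC^s,dA)$. The key observation is the formula \eqref{eq-var}:
\[
\ell_{z_{[s]}}(1,\widetilde{\varphi})=\exp\!\left(\tfrac{1}{2}\sum_{i=j+1}^{s}\bigl(|\widetilde{a}_{ii}z_i+\widetilde{b}_i|^2-|z_i|^2\bigr)+\tfrac{1}{2}\sum_{i=s+1}^{n}|\widetilde{b}_i|^2\right),
\]
which is a strictly positive function of $z_{[s]}$ that is \emph{independent} of the coordinates $z_1,\dots,z_j$. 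Consequently, if $j\ge 1$, Fubini forces $\|\ell_{z_{[s]}}(1,\widetilde{\varphi})\|_{L^{pq/(p-q)}(\CC^s,dA)}=\infty$, contradicting the integrability conclusion from Theorem~\ref{thm-bd}. Hence $j=0$, i.e.\ $\widetilde{a}_{ii}<1$ for all $1\le i\le s$, which means $\|A\|=\|\widetilde{A}\|=\widetilde{a}_{11}<1$.

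For (iii)$\Rightarrow$(ii): Suppose $\varphi(z)=Az+b$ with $\|A\|<1$. Then every $\widetilde{a}_{ii}<1$, so $j=0$, and each summand in the exponent of \eqref{eq-var} has the form
\[
|\widetilde{a}_{ii}z_i+\widetilde{b}_i|^2-|z_i|^2=-(1-\widetilde{a}_{ii}^{\,2})|z_i|^2+2\,\re(\widetilde{a}_{ii}\,\overline{\widetilde{b}_i}\,z_i)+|\widetilde{b}_i|^2,
\]
so $\ell_{z_{[s]}}(1,\widetilde{\varphi})$ decays like a Gaussian in each coordinate $z_i$ with $1\le i\le s$, and therefore lies in $L^r(\CC^s,dA)$ for every $r>0$. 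In particular condition (iii) of Theorem~\ref{thm-bd} holds, yielding compactness of $C_\varphi$.

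There is no real obstacle here beyond having the formula \eqref{eq-var} at hand; the main point is simply recognizing that the two-sided nature of the Theorem~\ref{thm-bd} dichotomy (bounded iff compact iff Gaussian-integrable) collapses to the clean condition $\|A\|<1$ because any equality $\widetilde{a}_{ii}=1$ creates a direction in which $\ell_{z_{[s]}}$ fails to decay.
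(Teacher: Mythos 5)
Your proposal is correct and follows essentially the same route as the paper: reduce to $\varphi(z)=Az+b$ via Proposition~\ref{prop-nec} and Corollary~\ref{cor-varphi}, pass to the normalization, and apply Theorem~\ref{thm-bd} together with the explicit formula \eqref{eq-var} to see that the $L^{pq/(p-q)}$-integrability of $\ell_{z_{[s]}}(1,\widetilde{\varphi})$ holds precisely when no $\widetilde{a}_{ii}$ equals $1$, i.e.\ $\|A\|<1$. Your only addition is to spell out (via Fubini and Gaussian decay) the equivalence that the paper merely asserts, which is a welcome but not substantively different elaboration.
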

\begin{proof}
$\bullet$ (i) $\Longleftrightarrow$ (ii) follows by Theorem \ref{thm-bd}.

$\bullet$ (ii) $\Longleftrightarrow$ (iii).
In view of Proposition \ref{prop-nec} and Corollary \ref{cor-varphi}, we may assume that $\varphi(z) = Az + b$ and $\widetilde{\varphi}(z) = \widetilde{A} z + \widetilde{b}$ as in Corollary \ref{cor-co}.

By Theorem \ref{thm-bd}, $C_{\varphi}$, and hence, $C_{\widetilde{\varphi}}: \calF^p(\CC^n) \to \calF^q(\CC^n)$ is compact if and only if $\ell_{z_{[s]}}(1,\widetilde{\varphi}) \in L^{\frac{pq}{p-q}}(\CC^s,dA)$.

Moreover, by \eqref{eq-var}, for every $z \in \CC^n$,
$$
\ell_{z_{[s]}}(1, \widetilde{\varphi})= \exp{\frac{\sum_{i=j+1}^s (|\widetilde{a}_{ii}z_i+\widetilde{b}_i|^2- |z_i|^2) + \sum_{i=s+1}^n |\widetilde{b}_i|^2  }{2}},
$$
with, as in Corollary \ref{cor-co}, $j = \max\{i: \sigma_{ii} = 1\}$.

It implies that the fact $\ell_{z_{[s]}}(1,\widetilde{\varphi}) \in L^{\frac{pq}{p-q}}(\CC^s,dA)$
is equivalent to that $j=0$, i. e., $\widetilde{a}_{ii} < 1$ for all $1 \leq i \leq s$. That is, $\|\widetilde{A}\| < 1$, and hence, $\|A\| < 1$.
\end{proof}

Now we discuss several particular cases of the main results above.

\begin{rem}
On Fock spaces $\calF^p(\CC)$, i.e. in the case $n=1$, there are only 2 cases of entire functions $\varphi(z) = az + b, a, b \in \CC$.

- Case 1. $a = 0$. Obviously, Proposition \ref{prop-zero} implies the corresponding result in \cite[Corollary~3.2]{TK-17}.

- Case 2. $a \neq 0$. In this case, Corollaries \ref{cor-iv} and \ref{cor-iv-1} yield the corresponding results in \cite[Theorems 3.3, 3,4]{TK-17}.
\end{rem}

\begin{rem}
Corollaries \ref{cor-co} and \ref{cor-co-1} extend the corresponding results for composition operators on Hilbert Fock spaces $\calF^2(\CC^n)$ in \cite[Theorems 1 and 2]{CMS-03} to composition operators acting from a general Fock space into another one.
\end{rem}

\section{Essential norm}

In a general setting, let $X, Y$ be Banach spaces, and $\mathcal{K}(X,Y)$ be the set of all compact operators from $X$ into $Y$. The essential norm of a bounded linear operator $L: X \to Y$, denoted by $\|L\|_e$, is defined as
$$
\|L\|_e=\inf\{\|L-K\|: K\in\mathcal{K}(X,Y)\}.
$$
Clearly, $L$ is compact if and only if $\|L\|_e=0$.

In view of Proposition \ref{prop-zero}, Theorem \ref{thm-bd} and Lemma \ref{lem-com1}, we only study essential norm of $W_{\psi, \varphi}: \mathcal F^p(\CC^n) \to \mathcal F^q(\CC^n)$ when $1< p \leq q < \infty$ and $(\psi, \varphi) \in \calW_q$.

\begin{thm}\label{thm-essnorm}
Let $1 < p \leq q < \infty$ and $W_{\psi,\varphi}: \calF^p(\CC^n) \to \calF^q(\CC^n)$ be a bounded weighted composition operator induced by a pair $(\psi, \varphi) \in \calW_q$ with $\varphi(z) = Az + b$ and $\text{rank} A = s$. Then
$$
\limsup_{z_{[s]}\to\infty}\ell_{z_{[s]}}(\widetilde{\psi}, \widetilde{\varphi}) \leq \|W_{\psi,\varphi}\|_e \leq 2 |\textnormal{det}\widetilde{A}_{[s]}|^{-\frac{2}{q}} \left(\dfrac{q}{p}\right)^{\frac{n}{q}} \limsup_{z_{[s]}\to\infty}\ell_{z_{[s]}}(\widetilde{\psi}, \widetilde{\varphi}),
$$
where (as in Theorem \ref{thm-bounded}) $(\widetilde{\psi}, \widetilde{\varphi})$ is the normalization of $(\psi, \varphi)$ with respect to the singular value decomposition $A = V\widetilde{A}U$.
\end{thm}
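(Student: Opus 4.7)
The plan is to first reduce to the normalized setting $(\psi,\varphi)\in\mathcal V_{q,s}$ via Proposition \ref{prop-equiv}, and then prove the two inequalities separately. The lower bound follows the pattern of the necessity part of Theorem \ref{thm-compact}, testing against the normalized kernels $k_{\varphi(z^{(j)})}$ via Lemma \ref{lem-com1}. The upper bound is obtained by approximating $W_{\psi,\varphi}$ by the compact operators $W_{\psi,\varphi}C_{rI}$, $r\in(0,1)$, where $C_{rI}f(z)=f(rz)$, and splitting the resulting norm estimate into a bulk piece on $|z_{[s]}|\le R$ and a tail piece on $|z_{[s]}|>R$.

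For the reduction, since $W_{\psi,\varphi}=C_U W_{\widetilde\psi,\widetilde\varphi}C_V$ with $C_U$ and $C_V$ surjective isometries of $\calF^p(\CC^n)$ and $\calF^q(\CC^n)$ respectively, conjugation preserves both essential norm and the ideal of compact operators, so $\|W_{\psi,\varphi}\|_e=\|W_{\widetilde\psi,\widetilde\varphi}\|_e$; I henceforth assume $(\psi,\varphi)\in\mathcal V_{q,s}$, so that $(\widetilde\psi,\widetilde\varphi)=(\psi,\varphi)$. For the lower bound, let $K:\calF^p(\CC^n)\to\calF^q(\CC^n)$ be compact and $(z^{(j)})$ any sequence with $z^{(j)}_{[s]}\to\infty$. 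Since the positive diagonal entries $a_{11},\dots,a_{ss}$ force $|w^{(j)}|:=|\varphi(z^{(j)})|\to\infty$, Lemma \ref{lem-com1} (where $p,q>1$ is essential) gives $\|Kk_{w^{(j)}}\|_{n,q}\to 0$. Combining this with \eqref{eq-est} and $\|k_{w^{(j)}}\|_{n,p}=1$,
\[
\|W_{\psi,\varphi}-K\| \ge \|W_{\psi,\varphi}k_{w^{(j)}}\|_{n,q} - \|Kk_{w^{(j)}}\|_{n,q} \ge \ell_{z^{(j)}_{[s]}}(\psi,\varphi) - o(1),
\]
and passing to $\limsup_j$, then taking $\inf_K$, yields $\|W_{\psi,\varphi}\|_e\ge\limsup_{z_{[s]}\to\infty}\ell_{z_{[s]}}(\psi,\varphi)$.

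For the upper bound, the dilation $C_{rI}$ is compact on $\calF^p(\CC^n)$ for each $r\in(0,1)$ by Corollary \ref{cor-co}(b) applied with $p=q$ and $\|rI\|=r<1$, so $W_{\psi,\varphi}C_{rI}$ is compact and $\|W_{\psi,\varphi}\|_e\le\limsup_{r\to 1^-}\|W_{\psi,\varphi}(I-C_{rI})\|$. Set $L=\limsup_{z_{[s]}\to\infty}\ell_{z_{[s]}}(\psi,\varphi)$, fix $\epsilon>0$, and choose $R$ with $\ell_{z_{[s]}}\le L+\epsilon$ on $|z_{[s]}|>R$. For $\|f\|_{n,p}\le 1$, set $g_r=(I-C_{rI})f$; rewriting \eqref{eq-normf} via the identity $e^{-q|z_{[s]}|^2/2}\|\psi(z_{[s]},\cdot)\|^q_{n-s,q}=\ell^q_{z_{[s]}}(\psi,\varphi)\,e^{-q|\varphi(z)|^2/2}$, I split the integral at $|z_{[s]}|=R$ into $\mathcal I_R(r,f)+\mathcal J_R(r,f)$. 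For the tail, replacing $\ell_{z_{[s]}}$ by $L+\epsilon$ and then executing the substitution $\zeta_{[s]}=A_{[s]}z_{[s]}+b_{[s]}$ together with Lemmas \ref{lem-F} and \ref{lem-Fpq} exactly as in the sufficiency proof of Theorem \ref{thm-bounded} yields
\[
\mathcal J_R(r,f) \le (L+\epsilon)^q\,|\textnormal{det}A_{[s]}|^{-2}(q/p)^n\|g_r\|_{n,p}^q.
\]
A routine change of variables shows $\|C_{rI}\|_{\calF^p\to\calF^p}\le r^{-2n/p}$, so $\|g_r\|_{n,p}\le(1+r^{-2n/p})\|f\|_{n,p}\to 2$ as $r\to 1^-$; this is precisely where the factor $2$ of the theorem appears.

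For the bulk $\mathcal I_R$, Lemma \ref{lem-F} gives $\|\psi(z_{[s]},\cdot)\|^q_{n-s,q}e^{-q|z_{[s]}|^2/2}\le\|\psi\|^q_{n,q}$, reducing the estimate to bounding $\int_{|z_{[s]}|\le R}|g_r(\varphi(z))|^q\,dA(z_{[s]})$. Since $|\varphi(z)|$ stays in some ball $\{|w|\le M_R\}$ on the bulk region, a Cauchy-type derivative estimate combined with the pointwise bound $|f(w)|\le e^{|w|^2/2}\|f\|_{n,p}$ from Lemma \ref{lem-F1} shows that gradients of $f$ are uniformly bounded on $\{|w|\le M_R\}$ over the unit ball of $\calF^p$, so $|f(w)-f(rw)|\le C_R(1-r)M_R$ uniformly in $|w|\le M_R$ and $\|f\|_{n,p}\le 1$. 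Consequently $\sup_{\|f\|_{n,p}\le 1}\mathcal I_R(r,f)\to 0$ as $r\to 1^-$. Combining the two pieces and letting $r\to 1^-$ then $\epsilon\to 0$, we obtain $\|W_{\psi,\varphi}\|_e\le 2L\,|\textnormal{det}A_{[s]}|^{-2/q}(q/p)^{n/q}$. The main obstacle is securing the uniform smallness of $\mathcal I_R$ over the unit ball of $\calF^p$: without the Cauchy derivative step anchored by Lemma \ref{lem-F1}, the differences $f-C_{rI}f$ could fail to vanish uniformly on compacta as $r\to 1^-$, producing an unwanted $\|\psi\|_{n,q}$-sized residue that would contaminate the clean $2L$ constant.
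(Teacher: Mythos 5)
Your proposal is correct and follows essentially the same route as the paper: the same reduction to $\mathcal V_{q,s}$ via unitary conjugation, the same lower bound by testing against $k_{\varphi(z^{(j)})}$ through \eqref{eq-est} and Lemma \ref{lem-com1}, and the same upper bound via the dilations $C_{rI}$ with the factor $2$ arising from $\|I-C_{rI}\|\leq 1+r^{-2n/p}\to 2$. The only cosmetic difference is in the bulk term, where you control $f-C_{rI}f$ uniformly on compacta by a Cauchy gradient estimate anchored in Lemma \ref{lem-F1}, whereas the paper bounds the Taylor coefficients directly; these are equivalent formulations of the same estimate.
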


\begin{proof}
Since  the operator $W_{\psi,\varphi}: \calF^p(\CC^n) \to \calF^q(\CC^n)$ is bounded, by Theorem \ref{thm-bounded}, $\ell(\widetilde{\psi}, \widetilde{\varphi}) < \infty$. Then $\displaystyle \limsup_{z_{[s]}\to\infty}\ell_{z_{[s]}}(\widetilde{\psi}, \widetilde{\varphi})$ is finite.

Firstly we show that $\|W_{\psi,\varphi}\|_e = \|W_{\widetilde{\psi}, \widetilde{\varphi}}\|_e$. Indeed, for every compact operator $T: \calF^p(\CC^n) \to \calF^q(\CC^n)$ we put
$\widetilde{T}= C_{U^*} T C_{V^*}$. Note that $\widetilde{T}$ is also compact from $\calF^p(\CC^n)$ to $\calF^q(\CC^n)$ and, by the proof of Proposition \ref{prop-equiv}, we have
$$
\|W_{\psi,\varphi} - T\| = \|C_U W_{\widetilde{\psi}, \widetilde{\varphi}}C_V - C_U \widetilde{T} C_V\| \leq  \|W_{\widetilde{\psi}, \widetilde{\varphi}} -  \widetilde{T}\|
$$
and also
$$
\|W_{\widetilde{\psi}, \widetilde{\varphi}} - \widetilde{T}\| = \|C_{U^*} W_{\psi, \varphi}C_{V^*} - C_{U^*} T C_{V^*}\| \leq  \|W_{\psi,\varphi} - T\|.
$$
Then, $\|W_{\widetilde{\psi}, \widetilde{\varphi}} - \widetilde{T}\| = \|W_{\psi,\varphi} - T\|$, which implies that $\|W_{\psi,\varphi}\|_e = \|W_{\widetilde{\psi}, \widetilde{\varphi}}\|_e$.

In view of this, it is enough to prove the theorem for those operators $W_{\psi,\varphi}$ which are induced by $(\psi,\varphi) \in \mathcal V_{q,s}$. In this case, $(\widetilde{\psi}, \widetilde{\varphi}) = (\psi,\varphi)$.

\textbf{Lower estimate.} By contradiction we assume that
$$
\|W_{\psi,\varphi}\|_e < \limsup_{ z_{[s]} \to\infty}\ell_{z_{[s]}}(\psi,\varphi).
$$
Then there exist positive constants $N < M$ and a compact operator $T: \calF^p(\CC^n) \to \calF^q(\CC^n)$ such that
$$
\|W_{\psi,\varphi} - T\| < N < M < \limsup_{z_{[s]}\to\infty}\ell_{z_{[s]}}(\psi,\varphi).
$$
By the definition of $\limsup$, we can find a sequence $(z^j_{[s]})_j$ in $\CC^s$ with $|z^j_{[s]}| \uparrow \infty$ as $j \to \infty$ so that
\begin{equation}\label{eq-in-1}
\lim_{j\to\infty}\ell_{z^j_{[s]}}(\psi,\varphi) = \limsup_{z_{[s]} \to\infty}\ell_{z_{[s]}}(\psi,\varphi) > M.
\end{equation}
On the other hand, for each $j \in \NN$ putting $z^j = (z^j_{[s]}, 0,..., 0) \in \CC^n$ and using \eqref{eq-est}, we have
\begin{align*}
 \|W_{\psi, \varphi} - T\| &\geq \|W_{\psi, \varphi}k_{\varphi(z^j)} - Tk_{\varphi(z^j)}\|_{n, q} \\
& \geq  \|W_{\psi, \varphi}k_{\varphi(z^j)}\|_{n, q} - \|Tk_{\varphi(z^j)}\|_{n, q} \geq \ell_{z^j_{[s]}}(\psi,\varphi) - \|Tk_{\varphi(z^j)}\|_{n, q}.
\end{align*}
Clearly, $\varphi(z^j) = Az^j+b \to \infty$ as $j \to \infty$. Then, by Lemma \ref{lem-com1}, $\|Tk_{Az^j+b}\|_{n, q} \to 0$ as $j \to \infty$.

From this and \eqref{eq-in-1}, we get
$$
N > \|W_{\psi, \varphi} - T\| \geq \lim_{j\to\infty}\ell_{z^j_{[s]}}(\psi,\varphi) > M,
$$
which is a contradiction.

\textbf{Upper estimate.} We fix a sequence of positive numbers $(\lambda_j)_j \uparrow 1$ and, for each $j \in \NN$, put
$ C_j = C_{\lambda_j I_n}$, where $I_n$ is the unit $n \times n$ matrix, that is,
$$
C_jf(z) = C_{\lambda_j I_n}f(z) = f(\lambda_jz), \ f \in \calO(\CC^n), z \in \CC^n.
$$
By Corollaries \ref{cor-iv} and \ref{cor-co}, the operator $C_j$ is compact from $\calF^p(\CC^n)$ into itself and
$ \|C_j\| \leq \lambda_j^{\frac{-2n}{p}}$.
Let us denote by $I$ the identity operator on $\calF^p(\CC^n)$ and put $T_j = I - C_j$. Obviously, $\|T_j\| \leq 1 + \lambda_j^{\frac{-2n}{p}}$ for every $j \in \NN$.

For any $R>0$ and $j \in \NN$, using \eqref{eq-normf}, we have
\begin{align*}
&\|W_{\psi,\varphi}\|_e \leq \|W_{\psi,\varphi} - W_{\psi,\varphi} C_j\|
= \sup_{\|f\|_{n,p} \leq 1}\|W_{\psi,\varphi}(I - C_j)f\|_{n,q}\\
= & \sup_{\|f\|_{n,p} \leq 1} \left( \left(\dfrac{q}{2\pi}\right)^s \int_{\CC^s} |T_jf(\varphi(z))|^qe^{-\frac{q\left|z_{[s]}\right|^2}{2}} \|\psi(z_{[s]},\cdot)\|_{n-s,q}^q dA(z_{[s]}) \right)^{\frac{1}{q}} \\
\leq & \sup_{\|f\|_{n,p} \leq 1} \left( \left(\dfrac{q}{2\pi}\right)^s \int_{|z_{[s]}| \leq R} |T_jf(\varphi(z))|^qe^{-\frac{q\left|z_{[s]}\right|^2}{2}} \|\psi(z_{[s]},\cdot)\|_{n-s,q}^q dA(z_{[s]}) \right)^{\frac{1}{q}}\\
+ & \sup_{\|f\|_{n,p} \leq 1} \left( \left(\dfrac{q}{2\pi}\right)^s \int_{|z_{[s]}| > R} |T_jf(\varphi(z))|^qe^{-\frac{q\left|z_{[s]}\right|^2}{2}} \|\psi(z_{[s]},\cdot)\|_{n-s,q}^q dA(z_{[s]}) \right)^{\frac{1}{q}}\\
=&\; \calI(j,R) + \calJ(j,R).
\end{align*}

On one hand, for $\calJ(j,R)$, by Lemmas \ref{lem-F} and \ref{lem-Fpq}, we have
\begin{align*}
&\calJ(j,R) \leq  \sup_{|z_{[s]}| > R} \ell_{z_{[s]}}(\psi,\varphi) \\
& \times \sup_{\|f\|_{n,p} \leq 1} \left( \left(\dfrac{q}{2\pi}\right)^s \int_{|z_{[s]}| > R} |T_jf(Az+b)|^qe^{-\frac{q|Az+b|^2}{2}} dA(z_{[s]}) \right)^{\frac{1}{q}}\\
\end{align*}
\begin{align*}
\leq & \; |\textnormal{det}A_{[s]}|^{-\frac{2}{q}}\sup_{|z_{[s]}| > R} \ell_{z_{[s]}}(\psi,\varphi) \\
& \times \sup_{\|f\|_{n,p} \leq 1} \left( \left(\dfrac{q}{2\pi}\right)^s \int_{\CC^s} \left| T_jf(\zeta_{[s]}, b'_{[s]})\right|^qe^{-\frac{q\left(\left|\zeta_{[s]}\right|^2 + \left|b'_{[s]}\right|^2 \right)}{2}} dA(\zeta_{[s]}) \right)^{\frac{1}{q}}\\
=& \; |\textnormal{det}A_{[s]}|^{-\frac{2}{q}}\sup_{|z_{[s]}| > R} \ell_{z_{[s]}}(\psi,\varphi) \sup_{\|f\|_{n,p} \leq 1} \|T_jf(\cdot,b'_{[s]})\|_{s, q}e^{-\frac{\left|b'_{[s]}\right|^2}{2}} \\
\leq& \; |\textnormal{det}A_{[s]}|^{-\frac{2}{q}}\sup_{|z_{[s]}| > R} \ell_{z_{[s]}}(\psi,\varphi) \sup_{\|f\|_{n,p} \leq 1} \|T_jf\|_{n, q}\\
\leq& \; \left(\dfrac{q}{p}\right)^{\frac{n}{q}} |\textnormal{det}A_{[s]}|^{-\frac{2}{q}}\sup_{|z_{[s]}| > R} \ell_{z_{[s]}}(\psi,\varphi) \sup_{\|f\|_{n,p} \leq 1} \|T_jf\|_{n, p}\\
=& \; \left(\dfrac{q}{p}\right)^{\frac{n}{q}}  \|T_j\| |\textnormal{det}A_{[s]}|^{-\frac{2}{q}}\sup_{|z_{[s]}| > R} \ell_{z_{[s]}}(\psi,\varphi)\\
\leq& \left(\dfrac{q}{p}\right)^{\frac{n}{q}} \left(1 + \lambda_j^{\frac{-2n}{p}} \right)|\textnormal{det}A_{[s]}|^{-\frac{2}{q}}\sup_{|z_{[s]}| > R} \ell_{z_{[s]}}(\psi,\varphi).
\end{align*}

On the other hand, for $\calI(j,R)$, we have
\begin{align*}
&\calI(j,R) \leq \sup_{|z_{[s]}| \leq R} \ell_{z_{[s]}}(\psi,\varphi) \\
& \times \sup_{\|f\|_{n,p} \leq 1} \left( \left(\dfrac{q}{2\pi}\right)^s \int_{|z_{[s]}| \leq R} |T_jf(Az+b)|^qe^{-\frac{q|Az+b|^2}{2}} dA(z_{[s]}) \right)^{\frac{1}{q}}\\
\leq & \; \ell(\psi,\varphi) \sup_{\|f\|_{n,p} \leq 1} \max_{|z_{[s]}| \leq R}|T_jf(Az+b)| \left( \left(\dfrac{q}{2\pi}\right)^s \int_{\CC^s} e^{-\frac{q|Az+b|^2}{2}} dA(z_{[s]}) \right)^{\frac{1}{q}}\\
=& \; |\textnormal{det}A_{[s]}|^{-\frac{2}{q}} e^{-\frac{\left|b'_{[s]}\right|^2}{2}}\ell(\psi,\varphi) \sup_{\|f\|_{n,p} \leq 1} \max_{|z_{[s]}| \leq R}|T_jf(Az+b)|\\
\leq &  \; |\textnormal{det}A_{[s]}|^{-\frac{2}{q}} e^{-\frac{\left|b'_{[s]}\right|^2}{2}}\ell(\psi,\varphi) \sup_{\|f\|_{n,p} \leq 1} \max_{|\zeta_{[s]}| \leq R_1}|T_jf(\zeta_{[s]},b'_{[s]})| \\
\leq & \; |\textnormal{det}A_{[s]}|^{-\frac{2}{q}} e^{-\frac{\left|b'_{[s]}\right|^2}{2}} \ell(\psi,\varphi) \sup_{\|f\|_{n,\infty} \leq 1} \max_{|\zeta_{[s]}| \leq R_1}|T_jf(\zeta_{[s]},b'_{[s]})|,
\end{align*}
where
$$
R_1 = \max_{|z_{[s]}| \leq R} |A_{[s]}z_{[s]}+ b_{[s]}|,
$$
and the last inequality is due to the fact that $\|f\|_{n,\infty} \leq \|f\|_{n,p}$ for every $f \in \calF^p(\CC^n)$.

Now for each function $f(z) = \sum_{|i|=0}^{\infty} a_i z^i$ with $\|f\|_{n,\infty} \leq 1$, by the Cauchy inequality for Taylor coefficients, for every $r = (r_1,...,r_n) \in \RR_+^n$ and $i = (i_1,...,i_n) \in \NN_0^n$, we have
\begin{align*}
|a_i| &= \dfrac{1}{i!} \left| \dfrac{\partial^{|i|}f}{\partial^{i_1}z_1...\partial^{i_n}z_n}(0)\right|
 \leq \dfrac{\max\{|f(z)|: z \in D(0,r)\}}{r_1^{i_1}...r_n^{i_n}} \\
& \leq \dfrac{\max\{e^{\frac{|z|^2}{2}}: z \in D(0,r)\}}{r_1^{i_1}...r_n^{i_n}} = \dfrac{e^{\frac{r_1^2+...+r_n^2}{2}}}{r_1^{i_1}...r_n^{i_n}},
\end{align*}
where, as usual, $|i| = i_1 + ... + i_n$, $i! = i_1!...i_n!$ and $z^i = z_1^{i_1}...z_n^{i_n}$ and
$D(0,r) =\{z \in \CC^n: |z_1| \leq r_1,...,|z_n| \leq r_n\}$.

It implies that
$$
|a_i| \leq \inf_{r_1 > 0} \dfrac{e^{\frac{r_1^2}{2}}}{r_1^{i_1}} ... \inf_{r_n > 0} \dfrac{e^{\frac{r_n^2}{2}}}{r_n^{i_n}} = \left(\dfrac{e}{i_1}\right)^{\frac{i_1}{2}}...\left(\dfrac{e}{i_n}\right)^{\frac{i_n}{2}},
$$
with a convention that $\left(\frac{e}{t}\right)^{\frac{t}{2}} = 1$ when $t = 0$.

From this it follows that
\begin{align*}
&\calI(j,R) \leq |\textnormal{det}A_{[s]}|^{-\frac{2}{q}} e^{-\frac{\left|b'_{[s]}\right|^2}{2}} \ell(\psi,\varphi) \sup_{\|f\|_{n,\infty} \leq 1} \max_{|\zeta_{[s]}| \leq R_1}|(I-C_j)f(\zeta_{[s]},b'_{[s]})|\\
\leq & \; |\textnormal{det}A_{[s]}|^{-\frac{2}{q}} e^{-\frac{\left|b'_{[s]}\right|^2}{2}} \ell(\psi,\varphi) \sup_{\|f\|_{n,\infty} \leq 1} \max_{|\zeta_{[s]}| \leq R_1} \sum_{|i|=1}^{\infty}|a_i| (1-\lambda_j^{|i|}) \left|(\zeta_{[s]},b'_{[s]})^i\right| \\
\leq & \; |\textnormal{det}A_{[s]}|^{-\frac{2}{q}} e^{-\frac{\left|b'_{[s]}\right|^2}{2}} \ell(\psi,\varphi) (1-\lambda_j)\sum_{|i|=1}^{\infty}\left(\dfrac{e}{i_1}\right)^{\frac{i_1}{2}} \cdots \left(\dfrac{e}{i_n}\right)^{\frac{i_n}{2}} |i| R_2^{|i|},
\end{align*}
where $R_2 = \max\{R_1, |b'_{[s]}|\}$.

Consequently, for every $R>0$,
\begin{align*}
\|W_{\psi,\varphi}\|_e &\leq \limsup_{j \to \infty} \|W_{\psi,\varphi} - W_{\psi,\varphi} C_j\| \leq \limsup_{j \to \infty} (\calI(i,R) + \calI(j,R))\\
& \leq 2 \left(\dfrac{q}{p}\right)^{\frac{n}{q}} |\textnormal{det}A_{[s]}|^{-\frac{2}{q}}\sup_{|z_{[s]}| > R} \ell_{z_{[s]}}(\psi,\varphi).
\end{align*}
Letting $R \to \infty$ in this inequality we get the desired upper estimate for $\|W_{\psi,\varphi}\|_e$.
\end{proof}

From Lemma \ref{lem-as} and Theorem \ref{thm-essnorm} we obtain the following result for the case when $A$ is invertible.

\begin{cor}\label{cor-essnorm}
Let $1 < p \leq q < \infty$ and $W_{\psi,\varphi}: \calF^p(\CC^n) \to \calF^q(\CC^n)$ be a bounded weighted composition operator induced by a pair $(\psi, \varphi) \in \calW_q$ with $\varphi(z) = Az + b$ and $A$ is invertible. Then
$$
\limsup_{z \to\infty}m_{z}(\psi, \varphi) \leq \|W_{\psi,\varphi}\|_e \leq 2 |\textnormal{det}A|^{-\frac{2}{q}} \left(\dfrac{q}{p}\right)^{\frac{n}{q}} \limsup_{z \to\infty}m(\psi, \varphi).
$$
\end{cor}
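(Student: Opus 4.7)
The plan is to deduce Corollary \ref{cor-essnorm} directly from Theorem \ref{thm-essnorm} by specializing to the case where $A$ is invertible and then rewriting the quantities that appear there in terms of the original pair $(\psi,\varphi)$ via Lemma \ref{lem-as}. No new estimation work should be required; the whole argument is a translation of notation together with a change of variable by a unitary matrix.

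First, I would record the simplifications that follow from $A$ being invertible. Then $\textnormal{rank} A = n$, so $s = n$; consequently $z_{[s]} = z$ and there is no trailing block of coordinates. By the convention stated just before Theorem \ref{thm-bounded} (``$\ell_z(\psi,\varphi) = m_z(\psi,\varphi)$ when $s = n$''), the quantity $\ell_{z_{[s]}}(\widetilde\psi,\widetilde\varphi)$ that appears in Theorem \ref{thm-essnorm} becomes $m_z(\widetilde\psi,\widetilde\varphi)$. Moreover $\widetilde A_{[s]} = \widetilde A$, and since the unitary factors $V,U$ in the singular value decomposition $A = V\widetilde A U$ satisfy $|\det V| = |\det U| = 1$, we get $|\det \widetilde A_{[s]}| = |\det \widetilde A| = |\det A|$. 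This turns the prefactor $|\det \widetilde A_{[s]}|^{-2/q}$ in Theorem \ref{thm-essnorm} into $|\det A|^{-2/q}$.

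Next, I would invoke Lemma \ref{lem-as}(a), which in the invertible case says $m_z(\psi,\varphi) = m_{Uz}(\widetilde\psi,\widetilde\varphi)$ for every $z \in \CC^n$. Because $U$ is unitary, the map $z \mapsto Uz$ is a norm-preserving bijection on $\CC^n$, so it commutes with taking limits at infinity; hence
\begin{equation*}
\limsup_{z \to \infty} m_z(\psi,\varphi)
= \limsup_{w \to \infty} m_w(\widetilde\psi,\widetilde\varphi)
= \limsup_{z_{[s]} \to \infty} \ell_{z_{[s]}}(\widetilde\psi,\widetilde\varphi),
\end{equation*}
the last equality being the convention $\ell_z = m_z$ in the case $s = n$.

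Plugging these identifications into the two-sided estimate of Theorem \ref{thm-essnorm} then yields
\begin{equation*}
\limsup_{z \to \infty} m_z(\psi,\varphi)
\;\le\; \|W_{\psi,\varphi}\|_e
\;\le\; 2\,|\det A|^{-\tfrac{2}{q}} \Bigl(\tfrac{q}{p}\Bigr)^{\tfrac{n}{q}} \limsup_{z \to \infty} m_z(\psi,\varphi),
\end{equation*}
which is the asserted inequality. I do not foresee a genuine obstacle: the only bookkeeping to be careful about is the passage from the $z_{[s]}$-asymptotics of $\ell$ on the normalized side to the $z$-asymptotics of $m$ on the original side, and this is exactly what the unitarity of $U$ and the identification $s = n$ are designed to handle.
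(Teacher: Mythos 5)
Your proposal is correct and follows exactly the route the paper intends: the paper states the corollary as an immediate consequence of Theorem \ref{thm-essnorm} and Lemma \ref{lem-as}, which is precisely the specialization $s=n$, the identity $m_z(\psi,\varphi)=\ell_{Uz}(\widetilde\psi,\widetilde\varphi)$, and the observation $|\det\widetilde A|=|\det A|$ that you carry out. No gaps.
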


In particular, Corollary \ref{cor-essnorm} contains the corresponding result in \cite[Theorem 3.7]{TK-17} as a particular case when $n = 1$.

\begin{rem}\label{rem-incl}
Suppose that $(\widehat{\psi},\widehat{\varphi})$ is another normalization of $(\psi,\varphi)$. Then by Lemma \ref{lem-UV}, there is an $s \times s$ unitary matrix $H$ such that
$\ell_{z_{[s]}}(\widehat{\psi},\widehat{\varphi}) = \ell_{Hz_{[s]}}(\widetilde{\psi},\widetilde{\varphi})$ for all $z_{[s]} \in \CC^s$. This implies that
$$
\ell(\widetilde{\psi},\widetilde{\varphi}) = \ell(\widehat{\psi},\widehat{\varphi})  \text{ and } \limsup_{z_{[s]} \to \infty} \ell_{z_{[s]}}(\widetilde{\psi},\widetilde{\varphi}) = \limsup_{z_{[s]} \to \infty} \ell_{z_{[s]}}(\widehat{\psi},\widehat{\varphi}).
$$
Moreover, in the case $0 < q < p < \infty$,
$
\ell_{z_{[s]}}(\widetilde{\psi},\widetilde{\varphi}) \in L^{\frac{pq}{p-q}}(\CC^s,dA)
$
if and only if
$
\ell_{z_{[s]}}(\widehat{\psi},\widehat{\varphi}) \in L^{\frac{pq}{p-q}}(\CC^s,dA).
$
Also
$$
\|\ell_{z_{[s]}}(\widetilde{\psi},\widetilde{\varphi})\|_{L^{\frac{pq}{p-q}}} = \|\ell_{z_{[s]}}(\widehat{\psi},\widehat{\varphi})\|_{L^{\frac{pq}{p-q}}}.
$$
By these assertions, our results in Theorems \ref{thm-bounded}, \ref{thm-compact}, \ref{thm-bd} and \ref{thm-essnorm} do not depend on the choice of a normalization $(\widetilde{\psi},\widetilde{\varphi})$ of $(\psi,\varphi)$.
\end{rem}

\bigskip

\textbf{Acknowledgement.} This article was carried out during the first-named author's stay at Division of Mathematical Sciences, School of Physical and Mathematical Sciences, Nanyang Technological University, as a postdoc fellow under the MOE's AcRF Tier 1 M4011724.110 (RG128/16). He would like to thank the institution for hospitality and support.

\bigskip

\end{document}